\numberwithin{equation}{section}
\newtheorem*{question*}{Question}
\newtheorem{thm}{Theorem}[section]
\newtheorem{prop}[thm]{Proposition}
\newtheorem{cor}[thm]{Corollary}
\newtheorem{defin}[thm]{Definition}
\newtheorem{lemma}[thm]{Lemma}
\newcommand{\N}{\mathfrak N}
\newcommand{\Z}{\mathfrak Z}
\newcommand{\V}{\mathcal V}
\newcommand{\g}{\mathfrak g}
\newcommand{\h}{\mathfrak h}
\newcommand{\lk}{\mathfrak k}
\newcommand{\p}{\mathfrak p}
\newcommand{\vpq}{V_{pq}}
\newcommand{\rpqc}{\mathbb R^{p+q}(C)}
\newcommand{\J}{ \begin{BMAT}{cc}{cc}\ 0&1\\ -1&0\end{BMAT}} 
\newcommand{\restrictto}[2]{\left. #1 \right|_{#2}}
\begin{document}
\title{Moduli of Einstein and Non-Einstein Nilradicals}
\author{M. Jablonski}
\date{}
\maketitle

The subject of left-invariant Ricci soliton metrics on nilpotent Lie groups has enjoyed quite a bit of attention in the past several years.  These metrics are intimately related to left-invariant Einstein metrics on non-unimodular solvable Lie groups.  In fact, a classification of one is equivalent to a classification of the other.  In this note, we focus our attention on nilpotent Lie groups and Lie algebras.  We refer the reader to \cite{Heber,LauretStandard}, and references therein,  for more information about the connection between solvable and nilpotent groups with said metrics.  If a nilpotent Lie group admits a left-invariant Ricci soliton metric, then it is called  an \textit{Einstein nilradical};  otherwise it is called a \textit{non-Einstein nilradical}.

In this note we are concerned with the following question.

\begin{question*} How are the Einstein and non-Einstein nilradicals distributed among nilpotent Lie algebras?
\end{question*}

A full answer to the above question is not known.  However, if we restrict our attention to smaller classes of nilpotent Lie algebras, then more can be said.  For example, a generic two-step nilpotent Lie algebra is an Einstein nilradical (see Theorem \ref{thm: generic two-step are einstein}).  As the moduli space of isomorphism classes of nilpotent algebras is not a Hausdorff space, care must be taken in formulating this result.  This is discussed in the sequel.\\

In this note we state the known results for the case of generic two-step nilalgebras and give some new results on the set of non-generic two-step nilalgebras.  For example, we construct some new continuous families  of both Einstein and non-Einstein nilradicals (see Theorems \ref{thm: moduli of einstein nilradicals} \& \ref{thm: adjoin several extra tuples}).  The construction of such families has received much attention in the literature recently; see, e.g., \cite{Heber,GordonKerr:NewHomogeneousEinstein,Will,Payne:ExistenceofSolitononNil,Kerr:ADeformationOfQuaternionic,
Eberlein:prescribedRicciTensor, LauretWill:EinsteinSolvExistandNonexist,  Jablo:Thesis, Nikolayevsky:EinsteinSolvmanifoldsattchedtoTwostepNilradicals, Will:CurveOfNonEinsteinNilradicals}.

One of the families of Einstein nilradicals constructed here has larger dimension than that of the generic set,   see Corollary \ref{cor: einstein nilrads of type (2,2k+1)}.  This demonstrates the delicate nature of studying the moduli space of isomorphism classes of algebras and trying to speak about `generic' algebras.

We finish the note by giving a construction of several families of non-Einstein nilradicals.  These families are non-trivial in the sense that they consist of indecomposable algebras; that is, they do not arise as a direct sum of ideals (cf. Theorem \ref{thm: Einstein and direct sums}).  Moreover, the dimension of these families can be made arbitrarily large (depending on the dimension of the underlying vector space), see Theorems \ref{thm: non-Einstein for p=2-6} \& \ref{thm: adjoin several extra tuples}.  To demonstrate the extent of our results we present the following.  Recall that a two-step nilpotent Lie algebra is said to be of type $(p,q)$ if the commutator has dimension $p$ and codimension $q$.\\

\noindent \textbf{Corollary \ref{thm: summary of indecomp. of types (p,q)}}  For $(p,q)$ satisfying $8\leq q$ and $2\leq p \leq \frac{5}{4}q-8$, there exist indecomposable non-Einstein two-step nilradicals of type $(p,q)$.  Moreover, most of these types admit moduli of such algebras.  The dimension of said moduli is bounded below by $\frac{1}{8}[q-\frac{4}{5}(p+8)-7]$.\\

While this lower bound is crude, it is most easily presented.  We summarize our results graphically in Figure 2.\\

\textit{Acknowledgements.}   The work contained in this note is an extension of my thesis work done under the direction of Patrick Eberlein.  I would like to thank him for encouraging me to think about interesting problems.  I would also like to thank Jorge Lauret for some interesting conversations.

\section{Two-step nilpotent Lie algebras and $\rpqc$}\label{section: two-step nilpotent lie algebras and RpqC}

\subsection*{Representations and moment maps}
We begin with some brief general information on representations of reductive groups.  Let $G$ be a real reductive algebraic group.  Let $V$ be a real vector space on which $G$ acts linearly and rationally.  We can endow $V$ with an inner product $\langle , \rangle$ so that $G$ is self-adjoint, that is, so that $G$ is closed under the transpose operation relative to $\langle , \rangle$.  This provides us with a so-called Cartan decomposition of $G$ and $\g$; these decompositions are $G=KP$ and $\g = \lk \oplus \p$.  Here $K = G\cap O(\langle , \rangle)$, where $O(\langle , \rangle)$ is the orthogonal group of $\langle , \rangle$, and $\lk = LK$.  The endomorphisms of $\lk$ are skew-symmetric relative to $\langle , \rangle$.  The subspace $\p = symm \cap \g$, where $symm$ denotes the symmetric endomorphisms relative to $\langle , \rangle$, and $P = exp(\p)$.  For more information about Cartan decompositions see \cite{Mostow:SelfAdjointGroups}, \cite{RichSlow}, or \cite{JabloDistinguishedOrbits}.

We may also endow $\g$ with an inner product $\langle \langle , \rangle \rangle$ so that $Ad(K)$ acts orthogonally and $ad(\p)$ acts by symmetric transformations.  In the sequel our group of interest will be semi-simple and we choose ${\langle \langle X,Y\rangle \rangle = -B(X,Y^t)}$ where $B$ is the Killing form of $\g$.

Given these structures, we may construct a $\g$-valued moment map for the representation $G\times V \to V$.  The moment map is a homogeneous polynomial $m: V\to \p$ defined implicitly by
    $$ \langle \langle m(v),X\rangle \rangle = \langle X\cdot v,v \rangle$$
This map is $K$-equivariant where $K$ acts on $\p$ via the Adjoint representation.  The moment map has been used to study the orbit structure of representations of complex reductive groups in \cite{Kirwan,Ness,Jablo:FinitenessTheorem-compatibleSubgroups} and real reductive groups in \cite{RichSlow,Marian,EberleinJablo,JabloDistinguishedOrbits}.

\begin{defin}\label{def: distinguished point}  A point $v\in V$ is called distinguished if $m(v)\cdot v = rv$ for some $r\in \mathbb R$.  An orbit $G\cdot v$ is called distinguished if it contains a distinguished point.
\end{defin}

\textit{Remark.}  The distinguished points are precisely the critical points of the induced polynomial $||m||^2$ on projective space $\mathbb PV$.  We observe that if $v$ is a point such that $m(v) = 0$, then $v$ is distinguished.  These special distinguished points are the so-called minimal vectors for the representation, see \cite{JabloDistinguishedOrbits} for more information on distinguished points and orbits.

\subsection*{Moduli and detecting $G$-orbits along subvarieties}

Here we state the main results from \cite{Jablo:FinitenessTheorem-compatibleSubgroups}.  In that work general techniques are given to determine whether an orbit is distinguished and also to count the moduli of orbits intersecting subvarieties.  We will not give any proofs and refer the reader to that article for proofs and more information.

Let $G$ be a reductive algebraic group and $H$ a reductive subgroup.  Let $G$ act linearly and rationally on $V$ and suppose that $W$ is an $H$-stable subspace.  The vector space $V$ is assumed to be endowed with an inner product $\langle , \rangle$ such that both $G$ and $H$ are closed under the metric adjoint or transpose.  This is always possible, see \cite{Mostow:SelfAdjointGroups} or \cite{Jablo:FinitenessTheorem-compatibleSubgroups}.   We point out that the results of \cite{Jablo:FinitenessTheorem-compatibleSubgroups} hold more generally  for  subvarieties which are smooth and have smooth projections in projective space, however we only need to apply the results to linear spaces here.

\begin{defin}\label{def: H detected along W} We say that $G$ is $H$-detected along $W$ if $m_G(w)\in \h $ for $w\in W$.  Here $m_G$ denotes the moment map for the $G$ action on $V$.
\end{defin}

Being $H$-detected along $W$ is equivalent to $m_G(w)=m_H(w)$ for all $w\in W$.

\begin{thm}\label{thm: finiteness thm and detecting orbits via H}  Suppose that $W$ is an $H$-stable subspace along which  $G$ is $H$-detected.  Then for $w\in W$,  the following are true
\begin{enumerate}
    \item $G\cdot w \cap W$ is a finite union whose components are $H^0$-orbits, where $H^0$ is the identity component of $H$
    \item $G\cdot w$ is distinguished if and only if $H \cdot w$ is distinguished
\end{enumerate}
\end{thm}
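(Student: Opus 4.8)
The plan is to exploit the hypothesis that $G$ is $H$-detected along $W$, which by the remark preceding the theorem means $m_G(w) = m_H(w)$ for all $w \in W$. This single identity is the engine driving both conclusions, so my first move is to unpack what it buys us. Since $H \subseteq G$ are both reductive and self-adjoint with respect to the same inner product $\langle , \rangle$, their Cartan decompositions are compatible: $\mathfrak h = \mathfrak k_H \oplus \mathfrak p_H$ sits inside $\mathfrak g = \mathfrak k \oplus \mathfrak p$ with $\mathfrak p_H = \mathfrak p \cap \mathfrak h$. The detection hypothesis places $m_G(w)$ inside $\mathfrak p_H$, meaning the $G$-moment map, restricted to $W$, never leaves the smaller algebra.

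For part (a), I would argue that the intersection $G \cdot w \cap W$ decomposes into $H^0$-orbits. The key observation is that the function $\|m_G\|^2$ on projective space, whose gradient flow governs the orbit structure, agrees with $\|m_H\|^2$ along $W$ because the moment maps coincide there. The $G$-critical point analysis therefore descends to an $H$-critical point analysis intrinsic to $W$. I expect the mechanism to be: if $w, w' \in W$ lie in the same $G$-orbit, one uses the equality of moment maps together with the properness/convexity properties of $\|m\|^2$ along $G$-orbits (the Kempf--Ness type theory underlying the cited finiteness results) to produce an element of $H^0$ carrying $w$ to $w'$, rather than merely an element of $G$. The finiteness of components should follow from the fact that the relevant critical set of $\|m_H\|^2$ has finitely many connected components, each a single $H^0$-orbit. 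Here I would lean directly on the stated results of \cite{Jablo:FinitenessTheorem-compatibleSubgroups}, since the theorem is explicitly billed as a restatement of that work.

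For part (b), the claim is that $G \cdot w$ is distinguished if and only if $H \cdot w$ is. One direction is nearly immediate from Definition \ref{def: distinguished point}: if $H \cdot w$ is distinguished, then there is $w' \in H \cdot w \subseteq W$ with $m_H(w') \cdot w' = r w'$; but $m_H(w') = m_G(w')$ by detection, so $m_G(w') \cdot w' = r w'$, exhibiting $w'$ as a $G$-distinguished point in $G \cdot w$. The reverse direction is the substantive one: given that $G \cdot w$ contains some $G$-distinguished point $u$ (which need not lie in $W$), I must manufacture an $H$-distinguished point inside $H \cdot w$. The natural strategy is to use part (a) to relocate the distinguished behavior back into $W$ — the distinguished points are the critical points of $\|m_G\|^2$ on $\mathbb{P}V$, and one wants to show that the intersection of the critical $G$-orbit with $W$ realizes the corresponding $H$-critical condition via the identity $m_G = m_H$.

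The main obstacle will be the reverse implication in part (b): translating a $G$-distinguished point that may live outside $W$ into an honest $H$-distinguished point within $W$. This requires knowing that the distinguished (critical) point of the $G$-orbit can be chosen, after moving by $G$, to lie in $W$, and then invoking that $H^0$ acts on this intersection (by part (a)) so that the $G$-criticality equation $m_G(w') \cdot w' = r w'$ transfers to the $H$-criticality equation through $m_G = m_H$. Making this relocation rigorous — ensuring the critical point actually intersects the $H$-stable subspace and that no distinguishedness is lost in the passage — is the delicate point, and I anticipate it rests on the convexity and uniqueness features of the norm-square function established in \cite{Jablo:FinitenessTheorem-compatibleSubgroups}, which I would cite rather than reprove.
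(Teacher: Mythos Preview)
The paper does not prove this theorem at all: immediately before Definition~\ref{def: H detected along W} it explicitly says ``We will not give any proofs and refer the reader to that article for proofs and more information,'' meaning Theorem~\ref{thm: finiteness thm and detecting orbits via H} is simply quoted from \cite{Jablo:FinitenessTheorem-compatibleSubgroups}. Your proposal, which sketches the Kempf--Ness/norm-square mechanism and then defers the hard steps to that same reference, is therefore entirely consistent with the paper's treatment---indeed it goes further than the paper does, since the paper offers no sketch whatsoever.
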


This theorem is crucial in counting the moduli of isomorphism classes of algebras with certain natural symmetries.  See Sections \ref{section 2} \& \ref{section 3}.

\subsection*{Two-step nilpotent algebras as points in a representation}

Let $N$ be a Lie group with Lie algebra $\N$.  We will denote the bracket of $\N$ by $[\cdot , \cdot ]$.  The group $N$, or the algebra $\N$, is said to be two-step nilpotent if $ [[\N,\N],\N] = 0$.  This is equivalent to the condition  $[\N , \N] \subset \Z$, where $\Z$ is the center of $\N$.  A two-step nilpotent Lie algebra is said to be of type $(p,q)$ if $dim \ [\N , \N] =p$ and $codim \  [\N ,\N ] =q$.

The Lie algebra structure, the bracket, is completely determined by its values on a basis.  Let $\N$ be a two-step nilalgebra of type $(p,q)$ and let $\mathcal B = \{ v_1, \dots ,v_q, Z_1,\dots , Z_p\}$ be a basis of $\N$ such that $\{Z_1,\dots ,Z_p\}$ is a basis of $[\N,\N]$, the commutator of $\N$.  Such a basis is called an \textit{adapted basis} of $\N$.  Consider the following tuple $C=(C^1,\dots, C^p) \in \mathfrak{so}(q,\mathbb R)^p$ defined by
    $$  [v_i,v_j] = \sum_k C_{ij}^k Z_k$$
The skew-symmetry of each matrix $C^k$ follows from the fact that the Lie bracket is anti-symmetric.  In this way we can associate to each adapted basis $\mathcal B$ a tuple of skew-symmetric matrices $C_\mathcal B$.  The condition that $\N$ be of type $(p,q)$ is equivalent to the condition that the $C^i$ be linearly independent in $\mathfrak{so}(q,\mathbb R)$.  The set of $C\in \mathfrak{so}(q,\mathbb R)$ whose coordinates are linearly independent forms a non-empty, Zariski open set which we denote by $\vpq^0$.  Observe that this imposes the condition $p\leq D_q:=\frac{1}{2}q(q-1)$.  When $q$ is understood, sometimes we write $D=D_q$.

Consider $\mathbb R^n$, where $n=p+q$, and the usual basis $\{e_1,\dots, e_q,e_{q+1},\dots,e_{q+p}\}$.  Let $C\in \vpq^0$ and construct a two-step nilpotent Lie bracket on $\mathbb R^n$ as follows
    $$[e_i,e_j] = \sum_k C^k_{ij} e_{q+k}$$
We denote this Lie algebra by $\rpqc$.  Every two-step nilpotent Lie algebra is isomorphic to some $\rpqc$, see the theorem below.

Ultimately we are interested in the left-invariant geometry of Lie groups.  A Lie group with a left-invariant metric is equivalent to a Lie algebra endowed with an inner product.  We will denote a Lie algebra $\N$ with inner product $\langle , \rangle$ by the pair $\{ \N, \langle , \rangle\}$.  We will study such Lie algebras.  The Lie algebra $\rpqc$ will be given the inner product so that the usual basis $\{e_i\}$ is orthonormal.

As we will be interested in isomorphism classes and changes of basis, we consider the following action of $G = GL(q,\mathbb R)\times GL(p,\mathbb R)$ on $\vpq = \mathfrak{so}(q,\mathbb R)^p$.  Consider the isomorphism $\mathfrak{so}(q,\mathbb R)^p = \mathfrak{so}(q,\mathbb R)\otimes \mathbb R^p$ where $(0,\dots, 0, C^i,0,\dots,0) \leftrightarrow C^i\otimes e_i$; we define our action via this identification.  For $M\in \mathfrak{so}(q,\mathbb R)$, $v\in \mathbb R^p$, $g\in GL(q,\mathbb R)$, and $h\in GL(p,\mathbb R)$ we have
    $$(g,h)\cdot M\otimes v = (gMg^t) \otimes (hv)$$
where $GL(p,\mathbb R)$ acts on $\mathbb R^p$ by the standard representation and we extend the action of $GL(q,
\mathbb R)\times GL(p,\mathbb R)$ linearly to all vectors of $\vpq$.  This is just the tensor of the representations of $GL(q,\mathbb R)$ on $\mathfrak{so}(q,\mathbb R)$ and of $GL(p,\mathbb R)$ on $\mathbb R^p$.  We point out for later use that $K = O(q,\mathbb R)\times O(p,\mathbb R)$ is a maximal compact subgroup of $G = GL(q,\mathbb R)\times GL(p,\mathbb R)$.

\begin{thm}[Eberlein]\label{thm: Eberlein - isom classes}  Let $\{\N,\langle , \rangle\}$ be a metric two-step nilpotent Lie algebra of type $(p,q)$.  Then $\{\N,\langle , \rangle\}$ is isometric to $\rpqc$ for some $C\in \vpq^0$.  Moreover, the isomorphism class of $\rpqc$ comprises the orbit $GL(q,\mathbb R)\times GL(p,\mathbb R) \cdot C$ in $\vpq$ while the isometry class of $\rpqc$ comprises the orbit $O(q,\mathbb R)\times O(p,\mathbb R) \cdot C$ in $\vpq$.
\end{thm}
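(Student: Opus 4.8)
The plan is to prove the three assertions in sequence: first that every metric two-step nilalgebra is isometric to some $\rpqc$, and then that the isomorphism and isometry classes are realized by the stated orbits. I would begin with the existence of an adapted orthonormal basis. Given $\{\N, \langle , \rangle\}$ of type $(p,q)$, the commutator $[\N,\N]$ is a $p$-dimensional subspace of the center $\Z$. Choose an orthonormal basis $\{Z_1,\dots,Z_p\}$ of $[\N,\N]$ and extend it by an orthonormal basis $\{v_1,\dots,v_q\}$ of the orthogonal complement $[\N,\N]^\perp$; together these form an adapted orthonormal basis. This immediately produces a tuple $C = C_{\mathcal B} \in \mathfrak{so}(q,\mathbb R)^p$ via the structure constants $[v_i,v_j] = \sum_k C_{ij}^k Z_k$, and linear independence of the $C^k$ follows from the type hypothesis $dim\,[\N,\N] = p$ as explained in the text, so $C \in \vpq^0$. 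The linear map sending $v_i \mapsto e_i$ and $Z_k \mapsto e_{q+k}$ is then by construction a Lie algebra isomorphism onto $\rpqc$ that carries the orthonormal basis to the orthonormal basis $\{e_i\}$, hence is an isometry. This establishes the first claim.

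Next I would identify the isomorphism class with the $G = GL(q,\mathbb R)\times GL(p,\mathbb R)$ orbit. The key observation is that a change of adapted basis is governed by exactly this group. An adapted basis must send the distinguished subspace $[\N,\N]$ to itself, so any change of basis respecting the adapted structure is block-triangular; one should check that the only freedom relevant to the bracket tuple is the $GL(q,\mathbb R)$ action on the complementary directions $\{v_i\}$ together with the $GL(p,\mathbb R)$ action on $\{Z_k\}$, and that the action on the tuple $C$ is precisely $(g,h)\cdot C = (gC^kg^t)\otimes(he_k)$ as defined above the theorem. The forward direction — that two tuples in the same $G$-orbit give isomorphic algebras — is a direct computation verifying that the prescribed linear map intertwines the brackets. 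The converse — that an isomorphism between $\rpqc$ and $\mathbb R^{p+q}(C')$ must preserve the commutator and hence descends to an element of $G$ acting as described — is where the genuine content lies.

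The main obstacle, and the step I would treat most carefully, is this converse: showing that any Lie algebra isomorphism $\phi\colon \rpqc \to \mathbb R^{p+q}(C')$ necessarily carries the adapted structure to the adapted structure, so that after recording its action on the two blocks it is realized by a group element $(g,h)\in G$ transforming $C$ to $C'$. The essential point is that $\phi$ must map the commutator subalgebra to the commutator subalgebra, $\phi([\N,\N]) = [\N',\N']$, which is automatic for any isomorphism; this forces the block-triangular shape. The subtlety is that $\phi$ may also mix center directions into the complementary directions (an off-diagonal block), but since such a shift lands inside $[\N,\N] \subseteq \Z$ it does not affect the resulting bracket tuple, so it can be absorbed without changing the $G$-orbit of $C'$. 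Once this is handled, the isometry statement follows by the same argument restricted to orthogonal changes of basis: an isometry must preserve $\langle , \rangle$, hence sends the orthonormal adapted basis to another orthonormal adapted basis, which is exactly the restriction of the action to the maximal compact subgroup $K = O(q,\mathbb R)\times O(p,\mathbb R)$, giving the orbit $O(q,\mathbb R)\times O(p,\mathbb R)\cdot C$.
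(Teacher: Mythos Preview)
The paper does not actually prove this theorem: it is attributed to Eberlein, and immediately after the statement the reader is referred to \cite{Eberlein:prescribedRicciTensor} or \cite{Jablo:Thesis} for a proof. So there is no in-paper argument to compare against.

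That said, your outline is the standard argument and is essentially correct. The first assertion is exactly as you describe: pick an orthonormal basis of $[\N,\N]$, extend by an orthonormal basis of $[\N,\N]^\perp$, and read off $C$. For the isomorphism statement, the crucial point you correctly isolate is that any Lie algebra isomorphism must carry commutator to commutator, forcing a block-triangular form $\begin{pmatrix} g & * \\ 0 & h\end{pmatrix}$, and that the off-diagonal block $*$ lands in the center and therefore drops out of all bracket computations; a short calculation then shows the resulting tuple is $(\,(g^t)^{-1},h)\cdot C$. One small point worth making explicit in the isometry case: the reason the off-diagonal block $*$ vanishes outright (so that you land in $O(q)\times O(p)$ rather than merely a block-triangular orthogonal matrix) is that an isometric isomorphism preserves both $[\N,\N]$ and the inner product, hence also $[\N,\N]^\perp$; this is implicit in your phrase ``sends the orthonormal adapted basis to another orthonormal adapted basis'' but deserves a sentence.
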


\textit{Remark}.  The fact that the quotient $\vpq^0 / G$ is not Hausdorff is one of the challenges  of talking about moduli of isomorphism classes of algebras.  However, in the sequel we demonstrate techniques to aid in measuring the size of some moduli of isomorphism classes.  For a proof of this theorem see \cite{Eberlein:prescribedRicciTensor} or \cite{Jablo:Thesis}.\\

As we will need the moment map for the above representation later, we record it here.  Let $m_1$ denote the moment map for the action of $GL(q,\mathbb R)$ on $\vpq$ and let $m_2$ denote the moment map for the action of $GL(p,\mathbb R)$ on $\vpq$.  Then the moment map of $G=GL(q,\mathbb R)\times GL(p,\mathbb R)$ is $m = (m_1,m_2)$ where
    \begin{eqnarray*} m_1(C) &=& -2\sum_i (C^i)^2 \\
        m_2(C)_{ij} &=& -tr(C^iC^j) \end{eqnarray*}
for $C = (C^1,\dots , C^p)$.  We point out that a point $C \in \vpq$ is $GL(q,\mathbb R)\times GL(p,\mathbb R)$-distinguished if and only if it is $SL(q,\mathbb R)\times SL(p,\mathbb R)$-distinguished.

\subsection*{Relating left-invariant geometry of nilpotent Lie groups with Geometric Invariant Theory}

Consider a nilpotent Lie group $N$ with Lie algebra $\N$.  A left-invariant metric on $N$ is equivalent to an inner product $\langle , \rangle$ on $\N$.  We denote the pair by $\{ \N, \langle , \rangle \}$.  We abuse notation and denote the inner product on $\N$ and the left-invariant metric on $N$ both by $\langle , \rangle$.

\begin{defin}  A left-invariant metric $\langle , \rangle$ on $N$ is called a Ricci soliton, or nilsoliton, if $Ric = \lambda Id + D$ for some $\lambda \in \mathbb R$ and some symmetric derivation $D$.  Here $Ric$ denotes the $(1,1)$ Ricci tensor relative to $\langle , \rangle$.
\end{defin}

For more information on nilsolitons we refer the reader to \cite{Lauret:CanonicalCompatibleMetric}.

\begin{thm}[Eberlein]\label{thm: Einstein nilradical iff distinguished orbit} Consider the metric two-step nilpotent Lie algebra $\rpqc$.  Then $\rpqc$ is a left-invariant Ricci soliton if and only if $C$ is a distinguished point of the representation $G = GL(q,\mathbb R)\times GL(p,\mathbb R)$ on $\vpq = \mathfrak{so}(q,\mathbb R)^p$.  Thus, $\rpqc$ is an Einstein nilradical if and only if the orbit $G \cdot C$ is distinguished.
\end{thm}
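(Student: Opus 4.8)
The plan is to translate the Ricci soliton condition into an algebraic condition on $C$ by first computing the Ricci operator of $\rpqc$ explicitly and then recognizing the soliton equation as the distinguished-point equation $m(C)\cdot C = rC$. First I would compute the Ricci endomorphism $Ric$ of $\{\rpqc,\ip{}{}\}$ relative to the orthonormal basis $\{e_i\}$, using the standard curvature formulas for a two-step nilpotent metric Lie algebra. Writing $\N = \V \oplus \Z$ with $\V = \mathrm{span}\{e_1,\dots,e_q\}$ and $\Z = \mathrm{span}\{e_{q+1},\dots,e_{q+p}\}$, and noting that $\ip{[e_i,e_j]}{e_{q+k}} = C^k_{ij}$ identifies each $C^k$ (up to sign) with the skew map $j(Z_k)$, the formulas give that $Ric$ is block diagonal with respect to this splitting, with vanishing mixed block $Ric(\V,\Z)=0$. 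The key computation is that the two blocks are exactly $Ric|_\V = -\tfrac14 m_1(C)$ and $Ric|_\Z = \tfrac14 m_2(C)$, where $m_1(C) = -2\sum_i (C^i)^2$ and $m_2(C)_{ij} = -tr(C^iC^j)$ are the two components of the moment map recorded above. The skew-symmetry of the $C^k$ is what makes $-\sum_i(C^i)^2$ positive semidefinite and produces the familiar sign pattern (negative on $\V$, positive on $\Z$).

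Next I would reduce the soliton equation to a derivation condition. Since $Ric$ and $\lambda\, Id$ are both symmetric and block diagonal, the only candidate for the derivation is $D := Ric - \lambda\, Id$, which is automatically symmetric and grading-preserving; thus $\rpqc$ is a Ricci soliton if and only if there is a $\lambda\in\mathbb R$ for which $D = Ric-\lambda\, Id$ is a derivation. I would then write out the derivation condition for a symmetric, block-diagonal $D = D_1\oplus D_2$: a direct expansion of $D[e_i,e_j] = [De_i,e_j]+[e_i,De_j]$ shows this is equivalent to $D_1 C^l + C^l D_1 = \sum_k (D_2)_{lk} C^k$ for every $l$.

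The crux is then to substitute $D_1 = -\tfrac14 m_1(C) - \lambda I_q$ and $D_2 = \tfrac14 m_2(C) - \lambda I_p$ and compare with the infinitesimal $G$-action on $\vpq$. Differentiating the given action $(g,h)\cdot(M\otimes v) = (gMg^t)\otimes(hv)$ yields $\big(m(C)\cdot C\big)^l = m_1(C) C^l + C^l m_1(C) + \sum_k m_2(C)_{lk} C^k$, where I use that $m_1(C)$ is symmetric. A short calculation shows the derivation condition for $D$ collapses exactly to $m(C)\cdot C = -4\lambda\, C$. Hence $\rpqc$ is a Ricci soliton if and only if $m(C)\cdot C = rC$ for some $r\in\mathbb R$ (with $r=-4\lambda$), which is precisely the statement that $C$ is distinguished. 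Finally, to obtain the orbit formulation I would invoke Theorem \ref{thm: Eberlein - isom classes}: every left-invariant metric on $N$ is, up to isometry, of the form $\mathbb R^{p+q}(C')$ with $C'\in G\cdot C$, so $N$ is an Einstein nilradical exactly when some point of $G\cdot C$ is distinguished, i.e. when the orbit $G\cdot C$ is distinguished.

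I expect the main obstacle to be the sign and constant bookkeeping. The Ricci operator is block diagonal but is \emph{not} a scalar multiple of the moment-map operator $m_1(C)\oplus m_2(C)$, since its two blocks carry opposite signs; it is only after imposing the derivation condition that the soliton equation reorganizes into the clean moment-map equation $m(C)\cdot C = rC$. Keeping the transpose conventions in the group action consistent with the derivation condition, and tracking the factor relating $\lambda$ and $r$, is where the care is needed; the geometric input, namely the two-step Ricci formula, is standard.
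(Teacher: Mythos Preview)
The paper does not actually prove this theorem: immediately after the statement it remarks that a proof can be found in \cite{Eberlein:prescribedRicciTensor} or \cite{Jablo:Thesis}, and moves on. So there is no in-paper argument to compare against.

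That said, your outline is the standard route taken in those references and it is correct. The identifications $Ric|_\V=-\tfrac14 m_1(C)$ and $Ric|_\Z=\tfrac14 m_2(C)$ follow from the usual two-step Ricci formulas with $j(Z_k)=C^k$; the derivation condition for a symmetric block-diagonal $D=D_1\oplus D_2$ is exactly $D_1C^l+C^lD_1=\sum_k(D_2)_{lk}C^k$; and substituting $D_1=-\tfrac14 m_1(C)-\lambda I_q$, $D_2=\tfrac14 m_2(C)-\lambda I_p$ collapses this to $(m(C)\cdot C)^l=-4\lambda\,C^l$, as you claim. Your use of Theorem~\ref{thm: Eberlein - isom classes} for the orbit statement is also the right bridge: varying the left-invariant metric on the underlying Lie group sweeps out exactly the $G$-orbit of $C$ (up to the $K$-action for isometry), so the existence of a soliton metric is equivalent to the existence of a distinguished point on $G\cdot C$. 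The only caveat worth flagging is the one you already identify: the sign discrepancy between the $\V$- and $\Z$-blocks of $Ric$ is absorbed precisely because the derivation identity has $D_1$ acting on the left/right of $C^l$ while $D_2$ acts through the index $l$, and this asymmetry matches the asymmetry in the infinitesimal $G$-action; your bookkeeping handles this correctly.
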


\textit{Remark}.  Using representations and distinguished points/orbits to study nilsolitons goes back to J. Lauret \cite{Lauret:CanonicalCompatibleMetric}  where general $k$-step nilpotent Lie groups are studied (see Definition \ref{def: distinguished point} for the definition of distinguished points and orbits).  The results above for two-step nilalgebras are not an immediate consequence of the known results for $k$-step nilalgebras.  We refer the reader to \cite{Eberlein:prescribedRicciTensor} or \cite{Jablo:Thesis} for a proof of the above theorem. (One could derive the above theorem from Lauret's work by applying the main results of \cite{Jablo:FinitenessTheorem-compatibleSubgroups}.)

The benefit of using these families of representations to study the case of two-step nilalgebras is that we can obtain information about `generic' algebras by looking at Zariski open sets in $\vpq$.  The following theorem is obtained by studying the generic orbits of $GL(q,\mathbb R)\times GL(p,\mathbb R)$ acting on $\vpq$.

\begin{thm}\label{thm: generic two-step are einstein}  There exists a Zariski open set $\mathcal O \subset \vpq$ such that the $GL(q,\mathbb R)\times GL(p,\mathbb R)$-orbit through any point $C\in \mathcal O$ has a distinguished orbit (cf. Definition \ref{def: distinguished point}).  Intersecting the set $\mathcal O$ with $\vpq^0$ shows that a generic two-step nilalgebra is a Einstein nilradical.  Moreover, the dimension of the moduli of isomorphism classes around a generic algebra is given below.  This dimension is also the dimension of nilsoliton metrics up to isometry and scaling, computed about a generic nilsoliton.
\end{thm}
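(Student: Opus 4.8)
The plan is to phase everything through the representation of $G=GL(q,\mathbb R)\times GL(p,\mathbb R)$ on $\vpq=\mathfrak{so}(q,\mathbb R)^p$ and to argue orbit by orbit. By Theorem~\ref{thm: Einstein nilradical iff distinguished orbit}, $\rpqc$ is an Einstein nilradical precisely when $G\cdot C$ is distinguished, so it suffices to produce a nonempty Zariski open set $\mathcal O\subset\vpq$ of distinguished orbits; intersecting with the nonempty Zariski open set $\vpq^0$ and invoking Eberlein's Theorem~\ref{thm: Eberlein - isom classes} (every type $(p,q)$ class is some $\rpqc$ with $C\in\vpq^0$) then yields the statement about generic algebras.

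For the existence of $\mathcal O$ I would first recall (the Remark after Definition~\ref{def: distinguished point}) that the distinguished points are exactly the critical points of $\|m\|^2$ on projective space, so the distinguished orbits are the $G$-orbits meeting this critical set. The substantive claim is that these orbits fill a Zariski open set, i.e.\ that a generic $C$ is $G$-conjugate to a critical point of $\|m\|^2$. I would stress that one cannot simply invoke closedness of generic orbits: already for $p=1$ with $q$ even the generic orbit of nondegenerate forms is open and \emph{non}-closed, and for $q$ odd the generic orbit has non-reductive stabilizer, yet in both cases the orbit is distinguished (the moment map acts on the representative by a scalar). Thus the argument must go through critical points directly. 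Concretely, I would try to exhibit, for each admissible $(p,q)$, an explicit representative $C$ for which $m_1(C)$ and $m_2(C)$ act on $C$ by scalars (so that $m(C)\cdot C=rC$ automatically) of maximal orbit dimension, and then use the real GIT machinery of \cite{JabloDistinguishedOrbits} (the moment-map stratification and its critical sets, building on \cite{RichSlow,Marian}) to conclude that the $G$-saturation of the relevant critical component is Zariski dense; $\mathcal O$ is then this saturation intersected with the maximal-orbit-dimension locus.

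The dimension count is the concrete deliverable. Since the generic orbits are the orbits of maximal dimension and sweep out an open subset of $\vpq$, a transversal to a generic orbit has dimension $\dim\vpq-\dim(G\cdot C)$, and this is the local dimension of the moduli of isomorphism classes about a generic algebra (the non-Hausdorffness noted after Theorem~\ref{thm: Eberlein - isom classes} affects separation, not dimension). Here $\dim\vpq=pD_q=\frac12 pq(q-1)$ and $\dim G=q^2+p^2$, so everything reduces to the generic stabilizer $G_C$. Unwinding the action, $(g,h)\in G_C$ forces $g\,\mathrm{span}(C^1,\dots,C^p)\,g^t=\mathrm{span}(C^1,\dots,C^p)$ with $h$ then determined by the induced map on this $p$-plane; hence $\dim G_C$ equals the dimension of the congruence-stabilizer in $GL(q,\mathbb R)$ of a generic $p$-plane of skew forms, and the moduli dimension is
$$ d \;=\; \frac12 pq(q-1)-\bigl(q^2+p^2\bigr)+\dim G_C. $$
Finally, the identification of $d$ with the dimension of nilsoliton metrics up to isometry and scaling follows from Theorem~\ref{thm: Eberlein - isom classes} (isometry classes are $O(q,\mathbb R)\times O(p,\mathbb R)$-orbits) together with the uniqueness of the nilsoliton on a fixed isomorphism class (\cite{Lauret:CanonicalCompatibleMetric}): each generic isomorphism class carries a nilsoliton, unique up to isometry and scaling, giving a generically finite correspondence and hence equal dimensions.

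The main obstacle is the generic stabilizer computation --- determining the congruence automorphisms of a generic $p$-dimensional space of skew-symmetric $q\times q$ matrices. This is a genuine problem in the theory of matrix pencils (Kronecker--Gantmacher normal forms for pencils of skew forms), and its answer depends on the parities of $p$ and $q$; the case $p=1$ already shows $\dim G_C$ jumping between $q$ even and odd. The same understanding underlies the more foundational obstacle of verifying that the distinguished locus is genuinely Zariski open and nonempty for every admissible $(p,q)$: since generic orbits of an arbitrary representation need not be distinguished, this step really uses the special structure of $\mathfrak{so}(q,\mathbb R)\otimes\mathbb R^p$ together with the GIT input above.
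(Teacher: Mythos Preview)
The paper does not prove this theorem in the text at all: the remark following the table simply refers to \cite[Chapter 7]{Jablo:Thesis} and \cite{Eberlein:prescribedRicciTensor} for the proof, and says that the generic stabilizer data are read off from Elashvili's lists \cite{ElashviliStationarySubalgebra} (also \cite{KL}). So there is nothing to compare at the level of argument; your outline is exactly the shape such a proof takes, and your dimension formula $d=\tfrac12 pq(q-1)-(q^2+p^2)+\dim G_C$ agrees with the table once one knows that for ``all other $(p,q)$'' the generic stabilizer is the obvious one-parameter group $(tI_q,t^{-2}I_p)$.

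The one place your proposal would not go through as written is the tool you suggest for the stabilizer computation. Kronecker--Gantmacher theory classifies \emph{pencils}, i.e.\ the case $p=2$ (and by the duality $(p,q)\leftrightarrow(D-p,q)$ also $p=D-2$); it says nothing useful about a generic $p$-dimensional linear system of skew forms for $3\le p\le D-3$. That is precisely why the paper invokes Elashvili: his tables give the generic isotropy subalgebra (``stationary subalgebra'') for irreducible representations of reductive groups, and $\mathfrak{so}(q)\otimes\mathbb R^p$ is one of the entries. So your reduction to ``compute $\dim G_C$'' is right, but the input you need is \cite{ElashviliStationarySubalgebra}/\cite{KL}, not pencil normal forms. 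Your observation about $p=1$ (generic orbit open, non-closed, yet distinguished) is a good cautionary example and is consistent with the thesis argument, which indeed proceeds via explicit distinguished representatives rather than via closedness of generic orbits.
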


\begin{center}
\begin{tabular}{|l|c|}
\multicolumn{2}{c}{
Dimension of Moduli
about generic points}\\
\hline

$(p,q)\ and \ (D-p,q)$ & dimension = $\mathcal M_{pq}$ \\
\hline

$(1,q)$ & 0 \\
$(2,4)$ & 0\\
$(2,2k),\ k\geq 3$ & k-3 \\
$(2,2k+1)$ & 0 \\
$(3,4)$ & 0\\
$(3,5)$ & 0\\
$(3,6)$ & 2\\
$(D,q)$ & 0\\
all other $(p,q)$& $p\ \frac{1}{2}q(q-1)-(q^2+p^2-2)-1$ \\ \hline
\end{tabular}
\end{center}

\textit{Remark.}  For a proof of the above theorem see \cite[Chapter 7]{Jablo:Thesis}.  The cases of $(p,q) \not = (1,q),(2,q), (D-1,q), (D-2,q)$ also appear in \cite{Eberlein:prescribedRicciTensor}.  All of the information needed to compute this is contained in the lists of Elashvili \cite{ElashviliStationarySubalgebra}.  Additionally this information was computed by Knop-Littlemann in \cite{KL}.  Note, the dimension of moduli will be the same for $(p,q)$ and the dual $(D-p,q)$.

In the remaining sections we will see that the non-generic algebras can have some very interesting behavior.  Most notable is that in many types $(p,q)$ one can construct arbitrarily large moduli of isomorphism classes of Einstein and non-Einstein nilradicals, see Section \ref{section 2} for a more precise statement.  The size of these moduli depends on $q$, but goes to infinity as $q$ does.

\section{Concatenating Structure Matrices}\label{section 2}

We begin with an interesting question.  Let $\N = \rpqc$ be a two-step nilalgebra of type $(p,q)$.  Often $\N$ is decomposed as $\N=\V \oplus [\N,\N]$ where $\V$ is the orthogonal compliment of $[\N,\N]$.  The subspace $\V$ is naturally (isometrically) identified with $\mathbb R^q$.  Suppose we consider $C=(C^1,\dots , C^p)$ with the property that each $C^i$ preserves a common subspace of $\V$; that is, $\V = \V_1 \oplus \V_2$ where $\V_1$, and hence $\V_2$, is preserved by every $C^i$.  Let $q_i = dim \ \V_i$.  Then $q=q_1+q_2$ and the algebras $\N_i = \V_i \oplus \Z$ are of type $(p,q_i)$.

\begin{question*} Consider $\mathfrak N=\mathcal V_1\oplus \mathcal V_2 \oplus [\N,\N]$ as above.  Is $\mathfrak N$ an Einstein nilalgebra if and only if both $\mathfrak N_1$ and $\mathfrak N_2$ are so?
\end{question*}

This is a natural question as it asks whether or not the nilsoliton condition can be determined from the `irreducible' components of $\V$; here irreducibility is in the sense of representations.  Even though the answer is negative, this will be our approach to constructing moduli of both Einstein and non-Einstein nilradicals.  For more examples and information see \cite[Chapter 8]{Jablo:Thesis}

Consider  $A=(A_1, \dots , A_{p})\in \mathfrak{so}(q_1)^p$ and $B=(B_1,\dots ,B_{p})\in \mathfrak{so}(q_2)^{p}$ which are structure matrices associated to $\N_1$ and $\N_2$, where $q_i = \dim \V_i$.  Then $\N$ corresponds to the structure matrix $C\in \mathfrak{so}(q)^p$ where $q=q_1+q_2$ and $$C_i=\begin{pmatrix}A_i \\ & B_i \end{pmatrix}$$
We call this process \textit{concatenation} and denote it by $C= A+_cB$.  As $A$ and $B$ have linearly independent components, the same is true for  $C$ and hence $C$ corresponds to a nilalgebra of type $(p,q)$.

At times we will abuse notation and concatenate $A\in \mathfrak{so}(q_1)^{p_1}$ and $B\in \mathfrak{so}(q_2)^{p_2}$ where $p_1 < p_2$.  This is an element of $\mathfrak{so}(q_1+q_2) ^{p_2}$ defined as
    $$ (A_1,\dots , A_{p_1}, \underbrace{0,\dots,0}_{p_2-p_1} ) +_c (B_1,\dots, B_{p_2} )$$

\begin{defin}\label{def: SLp minimal}  Let $G$ be a reductive group acting linearly on a vector space $V$.  A point $v\in V$ is called $G$-minimal if $m(v)=0$ where $m$ is the moment map of the $G$ action on $V$.
\end{defin}

\textit{Remark}. Consider $SL(q,\mathbb R)\times SL(p,\mathbb R)$ acting on $\mathfrak{so}(q)^p$.  The distinguished points (see Definition \ref{def: distinguished point}) of this action are precisely the nilsoliton metrics, see the previous section for details.
We first note that the generic two-step nilsolitons of type $(p,q)$ with $p< D-2 = \frac{1}{2}q(q-1)-2$ are all $SL(p,\mathbb R)$-minimal.  We point out for completeness that there do exist distinguished points which are not $SL(p,\mathbb R)$-minimal.  See \cite[Chapter 8]{Jablo:Thesis}.

\begin{lemma} Let $A$ be a $SL(q,\mathbb R)\times SL(p,\mathbb R)$-distinguished  point which is $SL(p,\mathbb R)$-minimal.  Then $A$ is $SL(q,\mathbb R)$ distinguished.
\end{lemma}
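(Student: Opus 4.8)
The plan is to unwind the definition of ``$SL(q,\mathbb R)\times SL(p,\mathbb R)$-distinguished'' for $A$ and to observe that the hypothesis of $SL(p,\mathbb R)$-minimality makes the defining equation collapse to the one defining ``$SL(q,\mathbb R)$-distinguished''.

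First I would record the relevant moment maps. Because the acting group is a product, its Cartan complement $\mathfrak p$ splits orthogonally as $\mathfrak p_q\oplus\mathfrak p_p$ (traceless symmetric $q\times q$ and $p\times p$ matrices), and the defining relation $\langle\langle m(v),\xi\rangle\rangle=\langle\xi\cdot v,v\rangle$ decouples across the two summands. Hence the $SL(q,\mathbb R)\times SL(p,\mathbb R)$ moment map is the pair $(m_1^0,m_2^0)$, where $m_1^0$ and $m_2^0$ are the traceless parts of the maps $m_1(C)=-2\sum_i(C^i)^2$ and $m_2(C)_{ij}=-\mathrm{tr}(C^iC^j)$ recorded above. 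In particular $m_1^0$ is exactly the moment map for $SL(q,\mathbb R)$ acting on $\vpq$ with $SL(p,\mathbb R)$ trivial, while $SL(p,\mathbb R)$-minimality of $A$ (Definition \ref{def: SLp minimal}) means precisely that $m_2^0(A)=0$, i.e. that $m_2(A)$ is a scalar multiple of the identity.

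Next I would differentiate the action $(g,h)\cdot(M\otimes v)=(gMg^t)\otimes(hv)$ to read off the infinitesimal action: an element $(X,Y)\in\mathfrak{sl}(q)\times\mathfrak{sl}(p)$ sends $A=(A^1,\dots,A^p)$ to the tuple whose $k$-th slot is $XA^k+A^kX^t+\sum_j Y_{kj}A^j$. By Definition \ref{def: distinguished point}, $A$ being $SL(q,\mathbb R)\times SL(p,\mathbb R)$-distinguished means $(m_1^0(A),m_2^0(A))\cdot A=rA$ for some $r\in\mathbb R$; taking $X=m_1^0(A)$ (symmetric, so $X^t=X$) and $Y=m_2^0(A)$ in the formula above, this is the system
$$m_1^0(A)A^k+A^km_1^0(A)+\sum_j\big(m_2^0(A)\big)_{kj}A^j=rA^k,\qquad k=1,\dots,p.$$
The $SL(p,\mathbb R)$-minimality hypothesis now gives $m_2^0(A)=0$, so the middle sum vanishes and the system reduces to $m_1^0(A)A^k+A^km_1^0(A)=rA^k$ for every $k$; that is, $m_1^0(A)\cdot A=rA$ for the $SL(q,\mathbb R)$-action. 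Since $m_1^0$ is the $SL(q,\mathbb R)$ moment map, this is exactly the assertion that $A$ is $SL(q,\mathbb R)$-distinguished, with the same scalar $r$.

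The calculation is short; the only step demanding genuine care is the first one, namely verifying that the moment map of the product is the pair of traceless parts and that its first coordinate coincides with the moment map of $SL(q,\mathbb R)$ acting alone. This is a formal consequence of the orthogonal splitting of $\mathfrak p$, but it is the hinge of the argument, since it is precisely what allows the vanishing of $m_2^0(A)$ to kill the coupling term $\sum_j(m_2^0(A))_{kj}A^j$ and leave behind a genuine $SL(q,\mathbb R)$-distinguished equation.
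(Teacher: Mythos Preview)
Your proof is correct and follows essentially the same route as the paper's: both exploit the additive splitting $m(A)\cdot A = m_1(A)\cdot A + m_2(A)\cdot A$ coming from the product structure to isolate the $SL(q,\mathbb R)$ piece. The only difference worth noting is that the paper's argument observes that the weaker hypothesis of $A$ being $SL(p,\mathbb R)$-\emph{distinguished} (i.e., $m_2(A)\cdot A = a_2 A$ for some $a_2$) already suffices, since then $m_1(A)\cdot A = (a-a_2)A$; you instead use the full minimality $m_2^0(A)=0$ to set $a_2=0$ directly, and you unpack the infinitesimal action in coordinates where the paper argues abstractly.
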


\begin{proof}  This actually only requires $A$ to be $SL(p,\mathbb R)$ distinguished.  Recall that the moment map for the $SL(q,\mathbb R)\times SL(p,\mathbb R)$ action is $m=m_1 + m_2$ where $m_1$ is the moment map for $SL(q,\mathbb R)$ and $m_2$ is the moment map for $SL(p,\mathbb R)$.

Recall $A$ being distinguished is equivalent to $m(A)\cdot A = a A$ for some $a\in \mathbb R$.  But if $m_2(A)\cdot A = a_2 A$, then $m_1(A)\cdot A = (a-a_2)A$.  That is, $A$ is $SL(q,\mathbb R)$ distinguished.
\end{proof}

\begin{prop}\label{prop: concat SLp-minimal nilsolitons}  Consider $A \in \mathfrak{so}(q_1)^p$, $B \in \mathfrak{so}(q_2)^p $, and let
$C \in \mathfrak{so}(q_1+q_2)^p$ be the concatenation of $A$ and $B$.  If $A$, $B$ are distinguished and $SL(p,\mathbb R)$-minimal then so is $C$, after rescaling $B$.
\end{prop}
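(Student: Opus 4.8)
The plan is to unwind both hypotheses into explicit equations for the two moment-map components recorded above, and then to verify directly that concatenation preserves the distinguished condition once $B$ is suitably rescaled. Write $m=(m_1,m_2)$ with $m_1(C)=-2\sum_k C_k^2$ and $m_2(C)_{ij}=-tr(C_iC_j)$, and recall that the infinitesimal action gives $(m(C)\cdot C)_i = m_1(C)C_i + C_i m_1(C) + \sum_j m_2(C)_{ij}C_j$. Because $A$ is $SL(p,\mathbb R)$-minimal, its traceless $SL(p,\mathbb R)$ moment map vanishes, so $m_2(A)=\alpha I_p$ is scalar with $\alpha=\tfrac1p\sum_k\|A_k\|^2>0$. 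Hence the distinguished condition $m(A)\cdot A=aA$ collapses onto the single ``derivation equation''
\[ (-2\textstyle\sum_k A_k^2)A_i + A_i(-2\sum_k A_k^2) = \lambda_A A_i, \qquad \lambda_A := a-\alpha, \]
for all $i$, and likewise $(-2\sum_k B_k^2)B_i + B_i(-2\sum_k B_k^2)=\lambda_B B_i$ with $\lambda_B:=b-\beta$ and $m_2(B)=\beta I_p$. This reduction is essentially the content of the preceding Lemma: under $SL(p,\mathbb R)$-minimality the entire distinguished condition is carried by the $m_1$-component.

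Next I would record the two elementary facts that make concatenation work, using $C_i=\mathrm{diag}(A_i,tB_i)$ for the rescaled concatenation $C=A+_c tB$. First, block-diagonality gives $m_1(C)=\mathrm{diag}\big(-2\sum_k A_k^2,\, -2t^2\sum_k B_k^2\big)$ and $m_2(C)_{ij}=-tr(A_iA_j)-t^2\,tr(B_iB_j)=(\alpha+t^2\beta)\delta_{ij}$. The second identity already shows $m_2(C)$ is scalar for every $t$, so $C$ is $SL(p,\mathbb R)$-minimal no matter how $B$ is scaled; this is the easy half of the claim, and it also yields $\sum_j m_2(C)_{ij}C_j=(\alpha+t^2\beta)C_i$. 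A direct block computation with the two derivation equations then gives
\[ m_1(C)C_i + C_i m_1(C) = \mathrm{diag}\big(\lambda_A A_i,\; t^3\lambda_B B_i\big), \]
which is a scalar multiple of $C_i=\mathrm{diag}(A_i,tB_i)$ precisely when $\lambda_A=t^2\lambda_B$.

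The remaining point, and the reason the rescaling is genuinely needed, is that the two blocks carry different $m_1$-eigenvalues $\lambda_A$ and $\lambda_B$, so one must solve $\lambda_A=t^2\lambda_B$ for a real $t$; this is where the main care is required. I would establish positivity of both eigenvalues as follows: pairing the derivation equation for $A$ with $A_i$ under the trace form yields $\lambda_A\|A_i\|^2 = 2\,tr\big(m_1(A)(-A_i^2)\big)$, and summing over $i$ together with $\sum_k(-A_k^2)=\tfrac12 m_1(A)$ gives $\lambda_A\sum_k\|A_k\|^2 = tr\big(m_1(A)^2\big)>0$, since $A\neq 0$ forces $m_1(A)\neq 0$; the same argument gives $\lambda_B>0$. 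Choosing $t=\sqrt{\lambda_A/\lambda_B}$ then produces $m_1(C)C_i+C_i m_1(C)=\lambda_A C_i$, whence $(m(C)\cdot C)_i=(\lambda_A+\alpha+t^2\beta)\,C_i$. Thus $C$ is distinguished, and it is $SL(p,\mathbb R)$-minimal as noted, completing the proof. The only subtle step is the positivity of $\lambda_A,\lambda_B$: without it the matching scale $t$ would fail to be real, and concatenation could not be corrected into a distinguished point.
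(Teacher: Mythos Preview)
Your proof is correct and follows essentially the same route as the paper's: reduce via $SL(p,\mathbb R)$-minimality to the $m_1$-eigenvalue equation on each block, observe that $m_1(C)$ is block diagonal, and rescale $B$ to match the two eigenvalues. You in fact go further than the paper by proving $\lambda_A,\lambda_B>0$ via $\lambda_A\sum_k\|A_k\|^2=\mathrm{tr}(m_1(A)^2)$, thereby justifying that the matching rescaling is real; the paper simply asserts ``by rescaling, we may assume $\lambda_a=\lambda_b$'' without addressing this sign issue.
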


This gives a natural way of constructing new soliton algebras from smaller pieces.

\begin{proof}  We first observe that $A$ being $SL(p,\mathbb R)$-minimal is equivalent to $|A_i|=|A_j|$ and $A_i \perp A_j$ for all $i\not = j$.  Thus, if $A$ and $B$ are $SL(p,\mathbb R)$-minimal then the concatenation $C$ automatically is so, since $\langle C_i, C_j \rangle = \langle A_i, A_j \rangle + \langle B_i, B_j \rangle $.

By the lemma above, since $A$ and $B$ are $SL(p,\mathbb R)$-minimal, we see that $m_1(A)\cdot A = \lambda_aA$ and $m_1(B)=\lambda_bB$.  By rescaling, we may assume that $\lambda_a=\lambda_b$.  Let $C=\begin{pmatrix} A \\ & B \end{pmatrix}$ be the concatenation of $A$ and $B$.  Then

$$ m_1(C) = -2\sum C_i^2 = \begin{pmatrix} -2\sum A_i^2\\& -2 \sum B_i^2\end{pmatrix}= \begin{pmatrix}m_1(A)\\& m_1(B)\end{pmatrix}$$
and since we rescaled our initial pair, we see that

$$m_1(C)\cdot C = m_1(C)C+Cm_1(C)^t=\begin{pmatrix}m_1(A)\cdot A\\& m_1(B)\cdot B\end{pmatrix}=\lambda C$$
Since $C$ is $SL(p,\mathbb R)$-minimal, we see that $m_2(C)\cdot C = 0$.  Thus, $m(C)\cdot C=\lambda C$.
\end{proof}

\begin{thm}\label{thm: dimension of moduli q_1+q_2}  Suppose we form a family of concatenations $C=A+_c B$ by letting $A\in \mathfrak{so}(q_1)^p$ and $B\in \mathfrak{so}(q_2)^p$ vary.  Then the dimension of the moduli of such $C$ is bounded below by the sum of the dimensions of the moduli of such $A$ and $B$.
\end{thm}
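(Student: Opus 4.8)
The plan is to realize the concatenated family inside a block-diagonal subspace and apply the detection machinery of Theorem \ref{thm: finiteness thm and detecting orbits via H}, which will force $G$-isomorphisms between concatenations to be realized, up to finite ambiguity, by a block-diagonal subgroup. Set $q=q_1+q_2$, let $G=GL(q,\mathbb R)\times GL(p,\mathbb R)$ act on $\vpq=\mathfrak{so}(q,\mathbb R)^p$ as in Theorem \ref{thm: Eberlein - isom classes}, and let $H=\big(GL(q_1,\mathbb R)\times GL(q_2,\mathbb R)\big)\times GL(p,\mathbb R)\subset G$, embedded block-diagonally in the first factor. Let $W\subset\vpq$ be the subspace of block-diagonal tuples, i.e.\ the image of the concatenation map $\Phi(A,B)=A+_cB$. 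I would note that $W$ is $H$-stable and that $\Phi$ is a linear bijection onto $W$ intertwining the $H$-action with the action of $GL(q_1)\times GL(q_2)\times GL(p)$ on $\mathfrak{so}(q_1)^p\times\mathfrak{so}(q_2)^p$ sending $(g_1,g_2,h)\cdot(A,B)=((g_1,h)\cdot A,(g_2,h)\cdot B)$; in particular $\Phi$ is injective.

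First I would verify that $G$ is $H$-detected along $W$ in the sense of Definition \ref{def: H detected along W}. For $C=A+_cB\in W$ the recorded moment-map formulas give $m_1(C)=\mathrm{diag}(m_1(A),m_1(B))$, which is block-diagonal and hence lies in the Lie algebra of $GL(q_1)\times GL(q_2)$, while $m_2(C)_{ij}=-\mathrm{tr}(C_iC_j)=m_2(A)_{ij}+m_2(B)_{ij}\in\mathfrak{gl}(p,\mathbb R)$ automatically. Thus $m_G(C)\in\h$ for every $C\in W$, so $G$ is $H$-detected along $W$.

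Next I would take the given families $\mathcal F_A\subseteq\mathfrak{so}(q_1)^p$ and $\mathcal F_B\subseteq\mathfrak{so}(q_2)^p$; being families of isomorphism classes, they are stable under $G_1=GL(q_1)\times GL(p)$ and $G_2=GL(q_2)\times GL(p)$ respectively, so $\mathcal F_C:=\Phi(\mathcal F_A\times\mathcal F_B)$ is $H$-stable and $\dim\mathcal F_C=\dim\mathcal F_A+\dim\mathcal F_B$. By the first part of Theorem \ref{thm: finiteness thm and detecting orbits via H}, for each $C\in\mathcal F_C$ the set $G\cdot C\cap W$ is a finite union of $H^0$-orbits; intersecting with the $H$-stable set $\mathcal F_C$ leaves $G\cdot C\cap\mathcal F_C$ a finite union of $H^0$-orbits, so for generic $C$ it has the same dimension as the single orbit $H\cdot C$. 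Measuring moduli dimension as $\dim\mathcal F_C$ minus the dimension of the generic fiber of the quotient map, this gives $\dim(\mathcal F_C/G)=\dim(\mathcal F_C/H)$: although $H\subsetneq G$, the extra elements of $G$ collapse nothing further along $W$.

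Finally I would compare the $H$-quotient with the separate moduli. Under $\Phi$ the group $H$ sits inside $G_1\times G_2$ as the diagonal of the two $GL(p)$-factors, so every $H$-orbit on $\mathcal F_A\times\mathcal F_B$ is contained in a $(G_1\times G_2)$-orbit; a smaller group yields a larger quotient, whence
$$\dim(\mathcal F_C/H)=\dim\big((\mathcal F_A\times\mathcal F_B)/H\big)\ \geq\ \dim\big((\mathcal F_A/G_1)\times(\mathcal F_B/G_2)\big)=\dim(\mathcal F_A/G_1)+\dim(\mathcal F_B/G_2).$$
Combining this with the equality from the previous step yields $\dim(\mathcal F_C/G)\geq\dim(\mathcal F_A/G_1)+\dim(\mathcal F_B/G_2)$, which is the asserted bound. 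I expect the main obstacle to be this third-to-last step: one must pin down the meaning of \emph{dimension of moduli} despite the non-Hausdorff quotient and justify that the finite-union-of-$H^0$-orbits conclusion forces the generic $G$- and $H$-fibers to have equal dimension. This is precisely where $H$-detection, rather than mere $H$-stability of $W$, is indispensable, since enlarging the acting group could a priori identify further points of the family.
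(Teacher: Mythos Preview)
Your proof is correct and follows essentially the same route as the paper: the same subspace $W=(\mathfrak{so}(q_1)\oplus\mathfrak{so}(q_2))^p$, the same subgroup $H=GL(q_1)\times GL(q_2)\times GL(p)$, the same appeal to Theorem~\ref{thm: finiteness thm and detecting orbits via H}, and the same final observation that an $H$-equivalence of concatenations forces $G_1$- and $G_2$-equivalences of the pieces. Your version is more explicit than the paper's in that you actually verify $H$-detection via the block structure of $m_1$ and $m_2$ (the paper simply asserts the hypotheses hold) and you articulate more carefully why passing from $H$ to the larger group $G_1\times G_2$ can only decrease the moduli dimension; but the architecture is identical.
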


\begin{proof} We will apply Theorem \ref{thm: finiteness thm and detecting orbits via H}.  Let $W$ be the subspace $ (\mathfrak{so}(q_1) \oplus \mathfrak{so}(q_2))^p \subset  \mathfrak{so}(q)^p$ where $q=q_1+q_2$.  And let $H = GL(q_1,\mathbb R) \times GL(q_2,\mathbb R) \times GL(p,\mathbb R) \subset GL(q,\mathbb R)\times GL(p,\mathbb R)$.  Then $G,H,V,W$ satisfy the hypotheses of Theorem \ref{thm: finiteness thm and detecting orbits via H}.

Thus, the moduli of $GL(q,\mathbb R)\times GL(p,\mathbb R)$-orbits containing such $C$ has the same dimension as the moduli of $GL(q_1,\mathbb R)\times GL(q_2,\mathbb R)\times GL(p,\mathbb R)$-orbits containing such $C$.  Obviously if $C=A_1+_cB_1 ,D=A_2+_cB_2 \in W$ are in the same $GL(q_1,\mathbb R)\times GL(q_2,\mathbb R)\times GL(p,\mathbb R)$-orbit, then $A_1,A_2$ are in the same $GL(q_1,\mathbb R)\times GL(p,\mathbb R)$-orbit, and similarly for the $B_i$.  This proves the theorem.
\end{proof}

\textit{Remark}.  It is possible to concatenate matrices in the same $GL(q_2,\mathbb R)\times GL(p,\mathbb R)$ orbit and obtain two concatenations which are not in the same $GL(q,\mathbb R)\times GL(p,\mathbb R)$ orbit.

\subsection*{Moduli of Einstein nilradicals via concatenation}
Let $A\in \mathfrak{so}(q_1)^p$ and $B\in \mathfrak{so}(q_2)^p$ be structure matrices corresponding to generic nilsolitons.  Then $C=A+_cB$ is also a nilsoliton (after rescaling $B$) by Proposition \ref{prop: concat SLp-minimal nilsolitons}.  We summarize this below.

\begin{thm}\label{thm: moduli of einstein nilradicals} Let $q\in \mathbb N$ and $p < D-2=\frac{1}{2}q(q-1)-2$.  Consider $\vpq^0$, the open set of $\mathfrak{so}(q)^p$ whose points correspond to nilalgebras.  Denote by $\mathcal M_{pq}$ the dimension of the moduli of nilsoltions computed about a generic point (see Theorem \ref{thm: generic two-step are einstein}).  Let $q_1,q_2\in \mathbb N$ be such that $q=q_1+q_2$, then there exist moduli of non-generic nilsolitons of dimension $\mathcal M_{p\ q_1} + \mathcal M_{p\ q_2} - 1$.
\end{thm}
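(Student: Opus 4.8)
The plan is to build the promised non-generic nilsolitons by concatenating generic nilsolitons of the two smaller types and to measure the resulting moduli with the detection machinery of Theorem \ref{thm: dimension of moduli q_1+q_2}. Concretely, I would let $A$ range over the generic nilsolitons of type $(p,q_1)$ and $B$ over the generic nilsolitons of type $(p,q_2)$; by Theorem \ref{thm: generic two-step are einstein} these form moduli of dimensions $\mathcal M_{p\,q_1}$ and $\mathcal M_{p\,q_2}$ respectively. The hypothesis $p<D-2$ is exactly what guarantees, via the remark preceding the lemma, that these generic nilsolitons are $SL(p,\mathbb R)$-minimal, which is the input needed for concatenation.

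First I would invoke Proposition \ref{prop: concat SLp-minimal nilsolitons}: since $A$ and $B$ are distinguished and $SL(p,\mathbb R)$-minimal, after rescaling $B$ so that the two nilsoliton constants agree, the concatenation $C=A+_cB$ is again distinguished and $SL(p,\mathbb R)$-minimal, hence a nilsoliton. This produces a genuine family of non-generic nilsolitons of type $(p,q)$ sitting inside the $H$-stable subspace $W=(\mathfrak{so}(q_1)\oplus\mathfrak{so}(q_2))^p$ appearing in the proof of Theorem \ref{thm: dimension of moduli q_1+q_2}.

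Next I would read off the dimension. Theorem \ref{thm: dimension of moduli q_1+q_2} (itself an application of the detection Theorem \ref{thm: finiteness thm and detecting orbits via H}) says that as $A$ and $B$ vary the $GL(q,\mathbb R)\times GL(p,\mathbb R)$-moduli of the concatenations is bounded below by the sum of the two factor moduli, namely $\mathcal M_{p\,q_1}+\mathcal M_{p\,q_2}$. The only correction comes from the rescaling in Proposition \ref{prop: concat SLp-minimal nilsolitons}: matching the two nilsoliton constants $\lambda_a=\lambda_b$ is a single real normalization tying the scale of $B$ to that of $A$, so in passing from two independently scaled pieces to one jointly scaled concatenation exactly one dimension is consumed. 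This accounts for the asserted lower bound $\mathcal M_{p\,q_1}+\mathcal M_{p\,q_2}-1$.

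The step I expect to be most delicate is precisely this scaling bookkeeping: one must check that the matching condition $\lambda_a=\lambda_b$ removes exactly one dimension and no more, i.e. that distinct isomorphism classes of generic pairs $(A,B)$ continue to yield distinct isomorphism classes of $C$ once the rescaling is imposed. This is where the injectivity built into the proof of Theorem \ref{thm: dimension of moduli q_1+q_2} — that the $H$-orbit of a concatenation determines the $GL(q_i,\mathbb R)\times GL(p,\mathbb R)$-orbits of its blocks — does the real work, and combining it with the single scaling relation yields the stated bound.
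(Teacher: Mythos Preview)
Your proposal is correct and follows essentially the same route as the paper: the paper's proof is a one-sentence remark that the theorem is a combination of Theorem \ref{thm: dimension of moduli q_1+q_2}, Proposition \ref{prop: concat SLp-minimal nilsolitons}, and the remark following Definition \ref{def: SLp minimal}, with the $-1$ coming from the rescaling of $B$. You have unpacked exactly these ingredients in the same order and with the same role for each.
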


This theorem is a combination of Theorem \ref{thm: dimension of moduli q_1+q_2}, Proposition \ref{prop: concat SLp-minimal nilsolitons}, and the remark following Definition \ref{def: SLp minimal}.  We subtract an additional one due to the rescaling of $B$ above. At first glance this may not seem interesting.  However, we point out the following corollary to contrast the generic setting of type $(2,2k+1)$ where the moduli has dimension zero about generic points.

\begin{cor}\label{cor: einstein nilrads of type (2,2k+1)}  Consider the algebras of type $(2,2k+1)$.  There exist moduli of non-generic algebras of this type which are nilsoliton and the moduli has dimension $k-i-4$ for all $i\leq k-4$.
\end{cor}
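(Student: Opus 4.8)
The plan is to derive Corollary \ref{cor: einstein nilrads of type (2,2k+1)} as a direct application of Theorem \ref{thm: moduli of einstein nilradicals} by making a judicious choice of the splitting $q = q_1 + q_2$ with $p = 2$. Since the target type is $(2, 2k+1)$, I would write $q = 2k+1$ and look for a decomposition into two pieces whose generic moduli dimensions $\mathcal M_{2\,q_1}$ and $\mathcal M_{2\,q_2}$ are known from the table in Theorem \ref{thm: generic two-step are einstein}. The table gives $\mathcal M_{2,2m} = m-3$ for $m \geq 3$, while the odd cases $\mathcal M_{2,2m+1} = 0$. This immediately suggests keeping one factor even and one factor odd, so that the even factor contributes a positive-dimensional modulus while the odd factor contributes nothing (but supplies the needed parity to make $q$ odd).

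First I would set $q_1 = 2j$ (even) and $q_2 = 2k+1-2j$ (odd), with $j$ chosen so that both $q_1$ and $q_2$ are large enough for the table entries to apply and for $p=2 < D_{q_i} - 2$ to hold. Then Theorem \ref{thm: moduli of einstein nilradicals} yields a modulus of non-generic nilsolitons of type $(2, 2k+1)$ of dimension $\mathcal M_{2\,q_1} + \mathcal M_{2\,q_2} - 1 = (j-3) + 0 - 1 = j - 4$. Reindexing by setting $i = k - j$, so that $j = k - i$, converts this to $k - i - 4$, exactly the claimed formula. The constraint $i \leq k-4$ in the statement corresponds precisely to requiring $j = k-i \geq 4$, i.e. $q_1 = 2j \geq 8$, which guarantees $\mathcal M_{2\,q_1} = j - 3$ is a valid (nonnegative) table entry and that the dimension count is meaningful.

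The main thing to verify carefully is that the hypotheses of Theorem \ref{thm: moduli of einstein nilradicals} are genuinely met for each admissible $i$: one needs $q = q_1 + q_2$ with $p = 2 < \frac{1}{2}q(q-1) - 2$, which is automatic for $q = 2k+1 \geq 9$, and one needs both $q_1$ and $q_2$ to fall in ranges where the quoted moduli dimensions are correct. The odd piece $q_2 = 2k+1-2j$ must still be at least some small value so that it defines an honest type-$(2,q_2)$ nilalgebra with $\mathcal M_{2\,q_2} = 0$; since $j \leq k$ we have $q_2 \geq 1$, and one should confirm $p=2 \leq D_{q_2}$ so that $\vpq[2\,q_2]^0$ is nonempty, which forces a mild lower bound on $q_2$ that is compatible with the stated range of $i$.

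I expect the only real obstacle to be bookkeeping: ensuring the index shift $i = k-j$ is applied consistently and that the boundary cases (smallest $q_2$, largest $i$) do not slip outside the regime where the generic-moduli table is valid. There is no new geometric content here — the corollary is purely a specialization of the concatenation construction — so the proof should amount to substituting $p=2$, $q_1 = 2(k-i)$, $q_2 = 2i+1$ into Theorem \ref{thm: moduli of einstein nilradicals} and simplifying. The emphasis in the write-up should be on contrasting the resulting positive dimension $k-i-4$ with the vanishing generic modulus $\mathcal M_{2,2k+1} = 0$, which is the whole point of stating the corollary.
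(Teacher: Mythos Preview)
Your proposal is correct and takes essentially the same approach as the paper: split $2k+1 = (2i+1) + 2(k-i)$, read off $\mathcal M_{2,\,2i+1}=0$ and $\mathcal M_{2,\,2(k-i)}=k-i-3$ from the table, and apply Theorem~\ref{thm: moduli of einstein nilradicals} to obtain moduli of dimension $k-i-4$. The only presentational difference is that the paper explicitly notes the generic solitons of types $(2,2i+1)$ and $(2,2(k-i))$ are $SL(p,\mathbb R)$-minimal (citing the thesis), whereas you absorb this into the invocation of Theorem~\ref{thm: moduli of einstein nilradicals}.
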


\begin{proof}  The proof amounts to picking matrices to concatenate.  Let $A$ be the generic nilsoliton of type $(2,2i+1)$.  This soliton is $SL(p,\mathbb R)$-minimal, for a construction of this see \cite[Chapter 7]{Jablo:Thesis}.  Choose $B$ to be a generic nilsoliton of type $(2,2k-2i)$.  These solitons are also $SL(p,\mathbb R)$-minimal and  the moduli of such has dimension $k-i-3$;  see the table in Theorem \ref{thm: generic two-step are einstein}.  Now apply the theorem above.
\end{proof}

\textit{Remark.} This technique will always build moduli of non-generic  Einstein nilradicals for all types $(p,q)$.  However, in general, the size of these constructed moduli will be smaller than the moduli of generic algebras.

\section{Moduli of non-Einstein nilradicals via concatenation}\label{section 3}

Finding examples of Lie algebras which cannot possibly admit a certain inner product is a very delicate problem. The first examples of moduli of non-Einstein nilradicals were constructed by Cynthia Will \cite{Will:CurveOfNonEinsteinNilradicals}.  Y. Nikolayevsky has recently classified the Einstein and non-Einstein nilradicals of type $(2,q)$ and as a consequence one obtains moduli of Einstein and non-Einstein nilradicals of this type.  This classification is produced by applying the (classical) systematic study of pencils of pairs of skew-symmetric matrices \cite{Nikolayevsky:EinsteinSolvmanifoldsattchedtoTwostepNilradicals}.  To our knowledge, these are the only other examples of moduli of non-Einstein nilradicals aside from those constructed here and in \cite{Jablo:FinitenessTheorem-compatibleSubgroups}.

In \cite{Jablo:FinitenessTheorem-compatibleSubgroups} it was declared that moduli of non-Einstein nilradicals arising from direct sums are somewhat trivial.  This is because of the following theorem; for a proof see that work.  The following theorem has also been proven independently by \cite{Nikolayevsky:EinsteinSolvmanifoldsandPreEinsteinDerivation} using different techniques.

\begin{thm}\label{thm: Einstein and direct sums} Consider a nilalgebra $\N = \N_1 \oplus \N_2$ which is a sum of ideals $\N_i$.  Then $\N$ is an Einstein nilradical if and only if $\N_1,\N_2$ are Einstein nilradicals.
\end{thm}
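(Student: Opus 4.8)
The plan is to prove the statement inside the geometric invariant theory framework of Lauret, where the detection machinery of Theorem \ref{thm: finiteness thm and detecting orbits via H} applies directly. Write $n_i = \dim \N_i$ and $n = n_1 + n_2$. A nilpotent Lie algebra structure on $\mathbb R^n$ is a point of the variety $V = \Lambda^2(\mathbb R^n)^* \otimes \mathbb R^n$ of skew brackets, on which $G = GL(n,\mathbb R)$ acts by $(g\cdot \mu)(X,Y) = g\,\mu(g^{-1}X, g^{-1}Y)$; by Lauret's theorem (the general analogue of Theorem \ref{thm: Einstein nilradical iff distinguished orbit}) a nilalgebra is an Einstein nilradical precisely when its $G$-orbit is distinguished. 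Decompose $\mathbb R^n = \mathbb R^{n_1} \oplus \mathbb R^{n_2}$ and let $W \subset V$ be the subspace of block-diagonal brackets, i.e. those $\mu$ with $\mu(\mathbb R^{n_1}, \mathbb R^{n_2}) = 0$ and $\mu(\mathbb R^{n_i}, \mathbb R^{n_i}) \subset \mathbb R^{n_i}$. The direct sum $\N = \N_1 \oplus \N_2$ is exactly a point $\mu = \mu_1 + \mu_2 \in W$, and $W$ is stable under $H = GL(n_1,\mathbb R) \times GL(n_2,\mathbb R) \subset G$; moreover $W \cong V_1 \oplus V_2$, the bracket varieties of $\mathbb R^{n_1}$ and $\mathbb R^{n_2}$, with $H$ acting as the product representation.

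The key step, and the one I expect to be the main obstacle, is to check that $G$ is $H$-detected along $W$, that is $m_G(\mu) \in \h$ for every $\mu \in W$, so that Theorem \ref{thm: finiteness thm and detecting orbits via H} becomes available. Using the defining identity $\ip{m_G(\mu)}{A} = \ip{\pi(A)\mu}{\mu}$ with $\pi(A)\mu = A\mu(\cdot,\cdot) - \mu(A\cdot,\cdot) - \mu(\cdot,A\cdot)$, and the fact that $m_G(\mu)$ is symmetric, it suffices to show this pairing vanishes for every off-diagonal elementary matrix $A = E_{ab}$ with $a,b$ in opposite blocks. A short computation shows that each of the three terms of $\pi(E_{ab})\mu$ is orthogonal to $\mu$: it is either supported on a cross-index pair (one index in each block), where the block-diagonal form $\mu$ vanishes, or it takes values in the block complementary to that of $\mu$. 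Hence $\ip{m_G(\mu)}{E_{ab}} = 0$, the off-diagonal blocks of $m_G(\mu)$ vanish, and $m_G(\mu) = m_H(\mu) \in \h$ on $W$.

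With detection in hand, part (b) of Theorem \ref{thm: finiteness thm and detecting orbits via H} gives that $G\cdot \mu$ is distinguished if and only if $H \cdot \mu$ is. It then remains to unwind the $H$-condition. Because $H$ acts on $W \cong V_1 \oplus V_2$ as a product, its moment map splits as $m_H(\mu) = (m^{(1)}(\mu_1), m^{(2)}(\mu_2))$ and its action respects the splitting, so $m_H(\mu)\cdot \mu = (m^{(1)}(\mu_1)\cdot \mu_1,\, m^{(2)}(\mu_2)\cdot \mu_2)$. Thus $\mu$ is $H$-distinguished exactly when there is a single $r \in \mathbb R$ with $m^{(i)}(\mu_i)\cdot \mu_i = r\,\mu_i$ for $i=1,2$. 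For the forward direction this immediately forces each $\mu_i$ to be distinguished, hence each $\N_i$ to be an Einstein nilradical.

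For the converse, suppose each $\N_i$ is an Einstein nilradical, so each $\mu_i$ is distinguished, say $m^{(i)}(\mu_i)\cdot \mu_i = r_i\,\mu_i$. The two scalars need not agree, but this is the only remaining issue and it is harmless: rescaling the bracket $\mu_2 \mapsto c\,\mu_2$ replaces $\N_2$ by an isomorphic, hence still Einstein, nilalgebra and, by homogeneity of the moment map, multiplies $r_2$ by $c^2$; choosing $c$ suitably arranges $r_1 = r_2$. Then $\mu_1 + c\,\mu_2$ is $H$-distinguished, so by part (b) of Theorem \ref{thm: finiteness thm and detecting orbits via H} its $G$-orbit is distinguished, and as it presents the algebra $\N_1 \oplus \N_2 \cong \N$, we conclude that $\N$ is an Einstein nilradical.
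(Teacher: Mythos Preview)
Your argument is correct and is exactly the strategy one expects from the reference the paper cites: the paper does not prove this theorem here but defers to \cite{Jablo:FinitenessTheorem-compatibleSubgroups}, the very source of Theorem \ref{thm: finiteness thm and detecting orbits via H}, so your use of the detection machinery applied to Lauret's $GL(n,\mathbb R)$-action on $\Lambda^2(\mathbb R^n)^*\otimes\mathbb R^n$ is on the mark. The verification that $G$ is $H$-detected along the block-diagonal brackets is clean, and the splitting $m_H(\mu)=(m^{(1)}(\mu_1),m^{(2)}(\mu_2))$ then reduces both directions to the factorwise statement.

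Two small points worth tightening. First, in the forward direction you pass from ``$H\cdot\mu$ is distinguished'' to ``$\mu$ is an $H$-distinguished point''; strictly you should replace $\mu$ by a distinguished point $\nu=(\nu_1,\nu_2)$ in its $H$-orbit, after which $\nu_i\in GL(n_i,\mathbb R)\cdot\mu_i$ is distinguished as you say. Second, in the converse you assert that a rescaling $c$ can equalize $r_1$ and $r_2$; this requires the $r_i$ to have the same sign, which is true but deserves a word: for any nonzero $\mu_i$ one has $r_i\,|\mu_i|^2=\langle m^{(i)}(\mu_i)\cdot\mu_i,\mu_i\rangle=\|m^{(i)}(\mu_i)\|^2>0$ (the trace component of $m^{(i)}(\mu_i)$ is $-|\mu_i|^2$ up to a positive constant, so $m^{(i)}(\mu_i)\neq 0$), hence $r_i>0$ and $c=\sqrt{r_1/r_2}$ works. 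If some $\mu_i=0$ the matching is automatic. With these clarifications the proof is complete.
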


From this theorem one can easily construct moduli of non-Einstein nilradicals by finding one such Lie algebra $\N_1$ and then considering any family of nilalgebras $\N_2$.  This is trivial in some sense and so we are interested in algebras which are indecomposable; that is, those that do not decompose as a direct sum of ideals.

The curve of non-Einstein nilradicals given in \cite{Will:CurveOfNonEinsteinNilradicals} does not arise as a direct sum of ideals and so is non-trivial.  Moreover, Will's examples cannot arise as concatenations either, as ours do, and hence demonstrate the delicate nature of the question of whether or not a given nilalgebra is an Einstein nilradical.  (To see that Will's examples do not arise as concatenations, one can show that the subalgebra generated by the structure matrices is all of $\mathfrak{so}(6)$, which acts irreducibly on $\mathbb R^6$.)

\subsection*{Defining a class of   algebras of type $(p,q)$ for $2\leq p \leq 6$}

Recall that an algebra of type $(1,q)$ is isomorphic to a direct sum of an algebra of Heisenberg type plus an abelian algebra, i.e., the Euclidean de Rham factor; hence an algebra of type $(1,q)$ is an Einstein nilradical.  As $p$ is bounded above by $D_q = \frac{1}{2}q(q-1)$, and the (only) algebra of type $(D_q,q)$ is an Einstein nilradical, we will search for non-Einstein nilradicals of type $(p,q)$ with $2\leq p \leq D_q-1$.

We construct examples of moduli of non-Einstein nilradicals of type $(j,2k)$ with $j=2,\dots, 6$.  One  could create similar examples of type $(j,2k+1)$ by concatenating the  nilsoliton of type $(2,3)$ to our examples.

Denote by $J$ the $2\times 2$ matrix $\begin{bmatrix} 0 & 1\\ -1 &0\end{bmatrix}$.  Define $A_1 \in \mathfrak{so}(2k)$ to be the concatenation $A_1 = J\underbrace{+_c\dots +_c}_k J$.  This is just a block diagonal matrix whose blocks are all $J$'s.  Define $B_1,B_2,\dots, B_6 \in \mathfrak{so}(4)$ as
    $$B_1= \left[ \begin{BMAT}{cc.cc}{cc.cc} 0 &1 &&\\ -1&0 &&\\ && 0&1\\ && -1&0\end{BMAT} \right], \
    B_2 = \left[ \begin{BMAT}{cc.cc}{cc.cc} && 0&\ 1 \ \\ && \ 1\ &0\\ 0&-1&&\\ -1&0&&\end{BMAT} \right], \
    B_3 = \left[ \begin{BMAT}{cc.cc}{cc.cc} && \ 1 \ &0 \\ &&0& \ 1\ \\ -1&0&&\\ 0&-1&&\end{BMAT} \right]
    $$

    $$B_4= \left[ \begin{BMAT}{cc.cc}{cc.cc} 0 &1 &&\\ -1&0 &&\\ && 0&-1\\ && 1&0\end{BMAT} \right], \
    B_5 = \left[ \begin{BMAT}{cc.cc}{cc.cc} && 0&\ 1 \ \\ && -1\ &0\\ 0&\ 1&&\\ -1&0&&\end{BMAT} \right],\
    B_6 = \left[ \begin{BMAT}{cc.cc}{cc.cc} && \ 1 \ &0 \\ &&0& -1\ \\ -1&0&&\\ 0&\ 1&&\end{BMAT} \right]
    $$
Observe that these $B_i$ are all mutually orthogonal and $B_i^2=-Id$.
The choice of order of $B_i$ is made so as to decrease the work in Section \ref{section: indecomposability}; this choice is not necessary, only convenient.

We are interested in algebras whose structure matrices are of the form
    $$ C = A_1 +_c (t_1B_1, B_2) +_c \dots +_c (t_{n-1}B_1,B_2) +_c (B_1,B_2,\dots,B_j)$$
where $t_i\in \mathbb R$.  These algebras are type $(j, 2k+4n)$, for $j=2,\dots,6$.  As stated before, we are also interested in concatenating the soliton of type $(2,3)$ (cf. remarks at the end of this section) to this $C$ so as to construct algebras of type $(j,2k+4n+3)$.

\begin{thm}\label{thm: non-Einstein for p=2-6} Consider $C\in \mathfrak{so}(2k+4n+d)^j$ defined above where $n\geq 0$ and  $d=0$ or $3$.  For $2k\geq 4n+d$, the nilalgebra corresponding to $C$ is an indecomposable, non-Einstein nilradical.  Moreover, by varying $t_i$, we have an $(n-1)$-dimensional family of (pairwise) non-isomorphic, indecomposable, non-Einstein nilradicals.
\end{thm}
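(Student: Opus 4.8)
The plan is to establish three separate claims about the structure matrix $C = A_1 +_c (t_1B_1,B_2) +_c \dots +_c (t_{n-1}B_1,B_2) +_c (B_1,\dots,B_j)$: that the associated nilalgebra is a non-Einstein nilradical, that it is indecomposable, and that varying the $t_i$ produces pairwise non-isomorphic algebras. For the non-Einstein claim I would invoke Theorem \ref{thm: Einstein nilradical iff distinguished orbit}, which reduces the problem to showing that the orbit $G\cdot C$ under $G = GL(q,\mathbb R)\times GL(p,\mathbb R)$ is \emph{not} distinguished. The natural tool is Theorem \ref{thm: finiteness thm and detecting orbits via H}: I would find an $H$-stable subspace $W$ containing $C$ along which $G$ is $H$-detected, for a suitable reductive $H$ respecting the block structure of the concatenation. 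Then part (b) of that theorem says $G\cdot C$ is distinguished if and only if $H\cdot C$ is distinguished, so it suffices to show $C$ fails to be $H$-distinguished. Because the $B_i$ satisfy $B_i^2 = -\mathrm{Id}$ and are mutually orthogonal, the moment map $m_1(C) = -2\sum_i C_i^2$ and the $GL(p)$-side moment map $m_2(C)_{ij} = -\mathrm{tr}(C^iC^j)$ should be computable blockwise, and I expect the condition $m(C)\cdot C = \lambda C$ to force algebraic relations on the $t_i$ that cannot be met under the dimension constraint $2k\geq 4n+d$.

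For indecomposability I would argue that any direct-sum decomposition $\N = \N_1\oplus\N_2$ of ideals would force a corresponding block decomposition of the space $\V$ that is simultaneously preserved by all the structure matrices $C^i$ in a way compatible with splitting the commutator. The key is that the first block $A_1$ together with the $B_i$ ties the summands together: the matrices $(t_iB_1,B_2)$ couple each $4$-dimensional $\V$-block to the others through the shared generators, and since $B_1,B_2$ act without a common invariant complementary subspace, no nontrivial invariant splitting into ideals can exist. I expect this is where the careful, deliberate choice of ordering of the $B_i$ (mentioned as reducing the work of Section \ref{section: indecomposability}) pays off, so I would defer the detailed invariant-subspace analysis to that section and cite it.

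For the pairwise non-isomorphism I would again use Theorem \ref{thm: finiteness thm and detecting orbits via H}(a): along the detected subspace $W$, the intersection $G\cdot C \cap W$ is a finite union of $H^0$-orbits. This means that two concatenations $C(t)$ and $C(t')$ lying in $W$ are $G$-equivalent only if they are $H^0$-equivalent, and the $H^0$-action only permutes and rescales the blocks. I would then show that the tuple $(t_1,\dots,t_{n-1})$, or some symmetric functions of it, is an invariant (up to the finite/rescaling ambiguity) that distinguishes the isomorphism classes, yielding an $(n-1)$-dimensional family after accounting for the action of the finite stabilizer. The main obstacle I anticipate is the indecomposability step: verifying that no exotic invariant splitting exists requires understanding the subalgebra of $\mathfrak{so}(2k+4n+d)$ generated by all the $C^i$ and showing its action leaves no ideal-compatible decomposition, which is exactly the kind of delicate, representation-theoretic computation that the paper isolates into its own section.
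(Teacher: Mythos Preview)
Your proposal is correct and follows essentially the same route as the paper: reduce via Theorem \ref{thm: finiteness thm and detecting orbits via H} to the block-diagonal subgroup $H$ acting on the subspace $W$, compute $m_1$ and $m_2$ blockwise using $B_i^2=-\mathrm{Id}$ and mutual orthogonality, derive from the distinguished-point condition a system of equalities that is incompatible with $2k\geq 4n+d$, handle pairwise non-isomorphism via the $H$-orbit comparison from part (a), and defer indecomposability to Section \ref{section: indecomposability}. One small sharpening: the contradiction is obtained for an \emph{arbitrary} point $w\in W$ with all block coefficients nonzero (an $H$-stable open set containing $H\cdot C$), so the equalities involve the general scaling parameters $a_1,b_i,c_i,d_i$ rather than the fixed $t_i$; this is what rules out every point of the $H$-orbit, not just $C$ itself.
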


The proof is broken into 3 pieces.  First it is shown that varying $t=(t_1,t_2,\dots, t_{n-1})\in \mathbb R^{n-1}$ produces moduli of algebras.  Then it is shown that these algebras are non-Einstein nilradicals.  The proof of these pieces is an application of Theorem \ref{thm: finiteness thm and detecting orbits via H}.  The proof that these algebras are indecomposable is contained in Section \ref{section: indecomposability}.

\subsection*{Moduli of algebras}
We begin by showing that  varying $t = (t_1,t_2,\dots,t_{n-1})$ produces non-isomorphic algebras.  Only the details for the case $C\in \mathfrak{so}(2k+4n)^j$ are presented as the case $C\in \mathfrak{so}(2k+4n+3)^j$ is the same, mutatis mutandis.
We  apply Theorem \ref{thm: finiteness thm and detecting orbits via H}  and consider the vector space
    $$W = \{ a_1A_1 +_c (b_1B_1, c_1B_2) +_c\dots +_c (b_{n-1}B_1,c_{n-1}B_2)+_c (d_1B_1,\dots,d_jB_j)  | \ a_1, b_i,c_i,d_i \in \mathbb R\}$$
contained in $\subset V:=\mathfrak{so}(2k+4n)^j$.
The vector space $W$ is the subspace of block diagonal matrices of the same type as $C$.  Let $G = GL(2k+4n)\times GL(2)$ and let $H = \mathbb R^+\cdot Id_{2k}\times \underbrace{\mathbb R^+ \cdot Id_4 \times \dots \times \mathbb R^+\cdot Id_4}_n \times \ diag(j\times j) 
$ be the subgroup preserving the same subspaces of $\mathbb R^{2k+4n}$ that $C$ preserves.  Observe that $W$ is $H$-stable.  We will show that $G$ is $H$-detected along $W$ and apply Theorem \ref{thm: finiteness thm and detecting orbits via H}.

Consider $w =(C_1,\dots, C_j)= a_1 A_1 +_c (b_1B_1, c_1B_2) +_c\dots +_c (b_{n-1}B_1,c_{n-1}B_2)+_c (d_1B_1,\dots,d_jB_j) \in W$.  We compute $m_1(w)$ and $m_2(w)$ where $m=(m_1,m_2)$ is the moment map of the $G$-action on $V$ (cf. remarks following Theorem \ref{thm: Eberlein - isom classes}).
    \begin{eqnarray} \label{eqn: moment map m1 + m2}
    m_1(w) &=& -2\sum_i C_i^2 =    \left[\begin{BMAT}{cccccc}{cccccc}
        2a_1^2 Id_{2k}& &&&&\\
          &2(b_1^2+c_1^2)Id_4 &&&&\\
         && \ddots &&&\\
         &&& 2(b_{n-1}^2+c_{n-1}^2)Id_4&&\\
         &&&& 2(d_1^2 +\dots + d_j^2) Id_4   &\\
         &&&  & &
         \addpath{(0,5,.)rudrdlu}
         \addpath{(3,3,.)rldruddur}
    \end{BMAT}\right] 
    \notag \\
    m_2(w)&=& \begin{bmatrix} <C_i,C_j>\end{bmatrix} =
    \begin{bmatrix} 2ka_1^2 +4(b_1^2 +\dots +b_n^2 + d_1^2)  \\ & 4(c_1^2+\dots +c_n^2 +d_2^2) \\ && 4d_3^2 \\ &&& \dots \\ &&&& 4d_j^2 \end{bmatrix} 
    \end{eqnarray}
\textit{Remark.} We observe that the above work shows that $G=GL(q,\mathbb R)\times GL(p,\mathbb R)$ is $H$-detected along $W$, see Definition \ref{def: H detected along W}.  By Theorem \ref{thm: finiteness thm and detecting orbits via H} we know that for each $w\in W$, $G\cdot w \cap W$ is a finite union of $H$-orbits and hence finding moduli of $H$-orbits is equivalent to finding moduli of $G$-orbits.

\begin{prop} Consider $C[t]=A_1 +_c (t_1B_1, B_2) +_c \dots +_c (t_{n-1}B_1,B_2) +_c (B_1,B_2,\dots,B_j)$ as above where $t=(t_1,t_2,\dots , t_{n-1})$.  For $t\not = s$, the tuples of matrices $C[t],C[s]$ lie on distinct $H$-orbits.  That is, we have an $(n-1)$-dimensional moduli of $H$-orbits and hence an $(n-1)$-dimensional moduli of $G$-orbits.
\end{prop}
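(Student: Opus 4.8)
The plan is to pass to $H$-orbits and then exploit the rigidity forced by the fixed matrix $B_2$ sitting in the second slot of every concatenated tuple. By the remark preceding the proposition, $G$ is $H$-detected along $W$, so Theorem~\ref{thm: finiteness thm and detecting orbits via H} tells us that $G\cdot w\cap W$ is a finite union of $H$-orbits for each $w\in W$. Consequently it suffices to prove that $C[t]$ and $C[s]$ lie on distinct $H$-orbits whenever $t\neq s$; the statement about $G$-orbits then follows from that same remark.

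First I would record the components of $C[t]=(C_1,\dots,C_j)$ relative to the block decomposition $\mathbb{R}^{2k+4n}=\mathbb{R}^{2k}\oplus(\mathbb{R}^4)^{\oplus n}$. Here $C_1$ is block-diagonal with blocks $A_1,\,t_1B_1,\dots,t_{n-1}B_1,\,B_1$; the second component $C_2$ is block-diagonal with $B_2$ in \emph{every} one of the $n$ blocks of size four (and $0$ on the $A_1$-block); and for $l\geq 3$ the component $C_l$ carries $B_l$ in the last size-four block and $0$ elsewhere.

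Next I would write a general $h\in H$ as $h=(\alpha\,Id_{2k},\,\beta_1 Id_4,\dots,\beta_n Id_4,\,\mathrm{diag}(\lambda_1,\dots,\lambda_j))$ with $\alpha,\beta_i\in\mathbb{R}^+$ and $\lambda_l\in\mathbb{R}^*$, and recall that $h$ acts on the $l$-th slot of a tuple by $C_l\mapsto \lambda_l\,gC_lg^t$, so that the scalar $\beta_i^2$ rescales the $i$-th size-four block and $\alpha^2$ rescales the $A_1$-block. Imposing $h\cdot C[t]=C[s]$ and reading the resulting scalar equations slot by slot is then routine. The decisive equations come from $C_2$: matching $\lambda_2\beta_i^2B_2=B_2$ in each of the $n$ blocks forces $\lambda_2\beta_i^2=1$ for all $i$, hence $\beta_1^2=\dots=\beta_n^2=:\beta^2$. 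The last block of $C_1$ gives $\lambda_1\beta^2=1$, while its $i$-th block gives $\lambda_1\beta^2\,t_i=s_i$; combining these yields $t_i=s_i$ for every $i$. (The remaining equations merely pin down $\alpha$ and the $\lambda_l$ for $l\geq 3$.) Thus $C[t]$ and $C[s]$ are $H$-conjugate only if $t=s$, giving an $(n-1)$-dimensional moduli of $H$-orbits, and therefore of $G$-orbits.

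The step I expect to be the crux is the realization in the previous paragraph that the repeated appearance of the \emph{same} matrix $B_2$ across all $n$ blocks is exactly what locks the block scalings $\beta_i^2$ together. Were $B_2$ absent, each $\beta_i^2$ could be chosen independently and the individual parameters $t_i$ could be rescaled away, collapsing the moduli to a point; it is this single design feature of the construction that produces the $(n-1)$ genuine parameters. Once that observation is in hand, the only care required is bookkeeping, namely tracking which size-four blocks each component $C_l$ actually occupies.
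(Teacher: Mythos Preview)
Your proof is correct and follows essentially the same approach as the paper: assume $h\cdot C[t]=C[s]$, read off the scalar equations from the $B_2$-slot to force all block scalings $\beta_i^2$ equal, then use the last block (where the $B_1$-coefficient is $1$) to pin down $\lambda_1\beta^2=1$, whence $t_i=s_i$. The only differences are notational (the paper writes $h=(a_1,b_1,\dots,b_n,\mathrm{diag}(c_1,\dots,c_j))$ and sets up the equation as $C[t]=h\cdot C[s]$), and your concluding remark about the role of the repeated $B_2$ is a helpful gloss not present in the original.
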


\begin{proof} Suppose $C[t] = h\cdot C[s]$ for some $h\in H$.  Write $h=(a_1,b_1,\dots ,b_n, diag(c_1,\dots,c_j))$.  Then $h\cdot C[s] = c_1\ a_1^2A_1 +_c (c_1\ b_1^2s_1B_1, c_2\ b_1^2B_2) +_c \dots +_c (c_1 \ b_{n-1}^2 s_{n-1} B_1 , c_2 \ b_{n-1}^2 B_2) +_c (c_1 \ b_n^2B_1,\dots, c_j \ b_n^2B_j)$ and $C[t] = h\cdot C[s]$ yield the following equalities (looking at the first and second slots)
    $$ \left\{ \begin{array}{l}
        t_i=c_1b_i^2s_i \\
        1= c_2b_1^2= \dots = c_2b_{n}^2
         \end{array}  \right.$$
This in turn implies $b_i^2 = 1/c_2$ which then implies $t_i = \frac{c_1}{c_2} s_i$ for all $i$.  As $t_n=s_n=1$, we have $\frac{c_1}{c_2}=1$ and so $t_i=s_i$ for all $i$; that is, $t=s$.

The last remark concerning moduli of $G$-orbits follows from Theorem \ref{thm: finiteness thm and detecting orbits via H}.
\end{proof}

\subsection*{Non-Einstein nilradicals}
Next we show  for $t = (t_1,t_2,\dots ,t_{n-1})$, with $t_i\not = 0$, that $C[t]$ cannot be an Einstein nilradical.  By Theorem \ref{thm: Einstein nilradical iff distinguished orbit}, $C[t]$ is an Einstein nilradical if and only if $G\cdot C[t]$ is a distinguished orbit.  By Theorem \ref{thm: finiteness thm and detecting orbits via H}, with the above work, this is equivalent to $H\cdot C[t]$ being a distinguished orbit.

Consider again the vector subspace
    $$W = \{ a_1A_1 +_c (b_1B_1, c_1B_2) +_c\dots +_c (b_{n-1}B_1,c_{n-1}B_2)+_c (d_1B_1,\dots,d_jB_j)  | \ a_1, b_i,c_i,d_i \in \mathbb R\}$$
Observe that $H$ preserves the open set of $W$ where $a_1,b_i,c_i,d_i \not = 0$.  If we can show that there are no distinguished points in this open set, then such $C[t]$ cannot have distinguished $H$-orbits and hence are non-Einstein nilradicals.

\begin{prop}  Consider the open set of $W$ defined above and suppose $2k \geq 4n$.  None of these points is a distinguished point for the action of $H$ on $W$ and hence none of these points has a distinguished $H$-orbit.  Consequently, all of these points correspond to algebras which are non-Einstein nilradicals.
\end{prop}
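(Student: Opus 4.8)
\section*{Proof Proposal}

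The plan is to exploit the Remark immediately preceding the proposition, namely that $G$ is $H$-detected along $W$, so that the $H$-moment map agrees with $m_G=(m_1,m_2)$ and I may use the explicit block formulas \eqref{eqn: moment map m1 + m2}. Fix a point $w = a_1 A_1 +_c (b_1 B_1, c_1 B_2) +_c \cdots +_c (d_1 B_1,\dots,d_j B_j)$ in the open set, so that all of $a_1, b_i, c_i, d_\ell$ are nonzero. By Definition \ref{def: distinguished point}, $w$ is distinguished for $H$ exactly when $m_H(w)\cdot w = r_0 w$ for some $r_0\in\mathbb R$, and the first step is to unwind this single tensor equation into scalar conditions. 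Writing $m_1(w)=\mathrm{diag}(x_0 Id_{2k}, x_1 Id_4,\dots,x_n Id_4)$ and $m_2(w)=\mathrm{diag}(y_1,\dots,y_j)$ as in \eqref{eqn: moment map m1 + m2}, I compute the action blockwise: since $m_1(w)$ is symmetric and block-scalar it multiplies the $r$-th diagonal block of each $C_s$ by $2x_r$, while the diagonal $m_2(w)$ simply adds $y_s C_s$ in the $s$-th slot. Thus the $(s,r)$ block of $m_H(w)\cdot w$ equals $(2x_r+y_s)(C_s)_r$, and the distinguished equation forces $2x_r+y_s=r_0$ for every pair $(s,r)$ at which $(C_s)_r\neq 0$.

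The second step is to read off the support pattern of $w$ in the open set: the first slot $C_1$ is nonzero in \emph{every} block $r=0,1,\dots,n$; the second slot $C_2$ is nonzero in blocks $1,\dots,n$; and each slot $C_s$ with $s\geq 3$ is nonzero only in the last block $r=n$. Consequently the $s=1$ conditions force all the $x_r$ to coincide, while comparing the $s=1,2,\dots,j$ conditions at the common block $r=n$ forces $y_1=y_2=\cdots=y_j$. These two collapses are precisely the balancing relations I need. Equality of the $x_r$ reads $a_1^2 = b_i^2+c_i^2 = d_1^2+\cdots+d_j^2 =: \beta$, which is strictly positive since $a_1\neq 0$; in particular each $d_\ell^2\leq \beta$ and $\sum_i c_i^2 \leq (n-1)\beta$.

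The crux of the argument — and the only place the hypothesis $2k\geq 4n$ is used — is that the equality $y_1=y_2$ is impossible. By \eqref{eqn: moment map m1 + m2} this equality is the identity $2k a_1^2 + 4(\sum_i b_i^2 + d_1^2) = 4(\sum_i c_i^2 + d_2^2)$. Its left-hand side is at least $2k\beta + 4d_1^2 > 2k\beta \geq 4n\beta$, while the balancing bounds make the right-hand side at most $4(n-1)\beta + 4\beta = 4n\beta$. The strict inequality (furnished by $d_1\neq 0$) contradicts $y_1=y_2$, so no point of the open set is distinguished for $H$. Intuitively, the large $A_1$-block of size $2k$ contributes the dominant term $2k a_1^2$ to $y_1$ only, and once $2k\geq 4n$ this contribution can no longer be balanced against the purely four-dimensional blocks feeding $y_2$.

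Finally, since $H$ acts by positive scalings on the diagonal blocks, it preserves the open set $\{a_1, b_i, c_i, d_\ell \neq 0\}$; hence the entire orbit $H\cdot C[t]$ of any $C[t]$ with the $t_i\neq 0$ remains inside this open set and, by the above, contains no distinguished point. Therefore $H\cdot C[t]$ is not a distinguished orbit, and Theorem \ref{thm: finiteness thm and detecting orbits via H} upgrades this to the statement that $G\cdot C[t]$ is not distinguished; by Theorem \ref{thm: Einstein nilradical iff distinguished orbit} the algebra $C[t]$ is a non-Einstein nilradical. I expect the main obstacle to be bookkeeping the block support pattern carefully enough that the collapse $y_1=y_2$ is genuinely forced; once that is secured, the concluding inequality is short and is the sole point at which the dimension constraint enters.
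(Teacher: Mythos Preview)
Your argument is correct and follows essentially the same route as the paper: both derive from $m(w)\cdot w=r_0w$ the equalities $a_1^2=b_i^2+c_i^2$ (for $i\le n-1$), $a_1^2=\sum_\ell d_\ell^2$, and $y_1=y_2$, then show the last of these is incompatible with $2k\ge 4n$. Your organization---first collapsing all $x_r$ via the support of $C_1$, then all $y_s$ via the last block---is a bit cleaner than the paper's pairwise comparison of the coefficient equations, and your final estimate (bounding each side of $y_1=y_2$ against $4n\beta$) is slightly more direct than the paper's algebraic rearrangement, but the substance is identical.
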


\begin{proof}  We only prove that none of these points is distinguished for the $H$-action on $W$ as the other claims of the proposition are addressed above.  In the sequel we write $b_n=d_1$ and $c_n=d_2$ to help with presentation.

We use our computations of the moment map above (see Equation \ref{eqn: moment map m1 + m2}).  For a point $w\in W$ to be distinguished we would have to have $m(w)\cdot w = rw$ for some $r\in \mathbb R$.  Comparing the `coefficients' in $m(w)\cdot w$ we have the following which must be equal
       \begin{eqnarray}
                4a_1^2         &+& 2ka_1^2 + 4 \sum_{i=1}^n b_i^2 \label{eqn: coeff 1}\\
                4(b_I^2+c_I^2) &+& 2ka_1^2 + 4 \sum_{i=1}^n b_i^2 \ \ \ \ \ \ \mbox{ (for } I=1,\dots,n-1) \label{eqn: coeff 2}\\
                4(b_I^2+c_I^2) &+& 4 \sum_{i=1}^n c_i^2 \ \ \ \ \ \ \ \ \ \ \ \ \ \ \ \ \mbox{ (for } I=1,\dots,n-1) \label{eqn: coeff 3}\\
                4(b_n^2+c_n^2) + 4(d_3^2+\dots +d_j^2) &+& 2ka_1^2 + 4 \sum_{i=1}^n b_i^2 \label{eqn: coeff 4}\\
                4(b_n^2+c_n^2) + 4(d_3^2+\dots +d_j^2) &+& 4 \sum_{i=1}^n c_i^2
                \label{eqn: coeff 5}\\
                4(b_n^2+c_n^2) + 4(d_3^2+\dots +d_j^2) &+& 4d_I^2 \ \ \ \ \ \ \ \ \ \ \ \ \ \ \ \ \ \ \ \ \ \ \ \mbox{ (for } I=3,\dots, j) \label{eqn: coeff 6}
                \end{eqnarray}
with $b_n=d_1$ and $c_n=d_2$.

Comparing Equations \ref{eqn: coeff 1} and \ref{eqn: coeff 2} yields $a_1^2 = b_i^2+c_i^2$ for $i=1,\dots, n-1$.  Comparing Equations \ref{eqn: coeff 1} and \ref{eqn: coeff 4} we have $a_1^2 = b_n^2+c_n^2 + \sum d_i^2$ which implies $a_1^2 \geq b_n^2+c_n^2$.  Putting these together we then have
    $$ n\ a_1^2 \geq \sum_{i=1}^n b_i^2+c_i^2$$
Comparing Equations \ref{eqn: coeff 2} and \ref{eqn: coeff 3} we have $2k\ a_1^2 + 4\sum b_i^2 = \sum c_i^2$, thus $2k \ a_1^2 = 4\sum (c_i^2-b_i^2)$.  Putting this together with the above yields
    $$ 2k\ \sum b_i^2+c_i^2 \leq 2k\ n\ a_1^2 = 4n\ \sum c_i^2-b_i^2$$
rearranging terms we have
    $$ 0 < (2k+n) \sum b_i^2 \leq (4n-2k) \sum c_i^2$$
By hypothesis, $b_i \not = 0$.  Thus, the above inequality cannot be satisfied when $2k\geq 4n$ and the proposition is proved.
\end{proof}

\textit{Remark.} Observe that Equation \ref{eqn: coeff 6} was not used in the proof above.  This will be used in the next section to build more non-Einstein nilradicals with $p>6$.

As stated through out this section, the above work can be adapted trivially to produce non-Einstein nilradicals of type $(j,2k+4n+3)$ where $j=2,\dots, 6$, $n\geq 1$, and $2k \geq 4n+3$.  We summarize our knowledge up to this point in the following figure; recall that algebras of type $(p,q)$ satisfy $p\leq \frac{1}{2}q(q-1)$.

\begin{figure}[htp]\label{fig: types 26}
\centering
\includegraphics[height=1.75in]{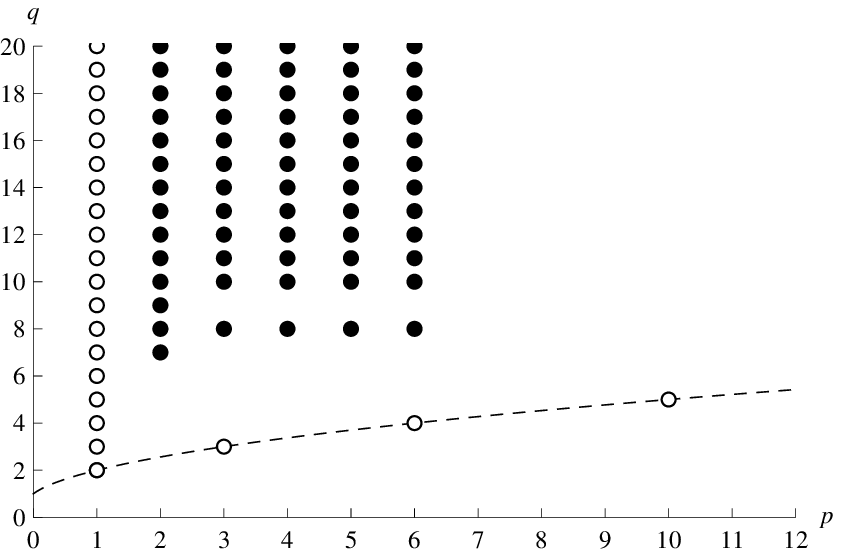}
\includegraphics[height=1.85in]{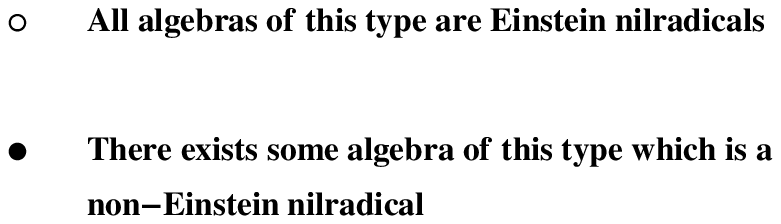}
\caption{Types $(p,q)$ which do or do not admit indecomposable non-Einstein nilradicals.}
\end{figure}

For the sake of completeness we will fill-in the missing types $(3,9),\dots, (6,9)$.
We only give the definition of the algebras of these types and leave to the reader the details of showing that they are non-Einstein
nilradicals.  The proof of this is similar to our earlier cases.  Our examples of these types are the following concatenations
    $$\left[ \J \right] +_c \left( \left[ \begin{BMAT}{c.c}{c.c}\J & \\&0 \end{BMAT}\right], \left[ \begin{BMAT}{c.c}{c.c}0&\\ &\J \end{BMAT} \right] \right) +_c (B_1,\dots,B_j)$$
where the $B_i\in \mathfrak{so}(6)$ are defined as before and $j = 3,\dots, 6$.  Observe that the middle term is the soliton of type $(2,3)$.

\section{Building new examples of non-Einstein nilradicals of type $(p,q)$}\label{section: examples of type (p,q)}

In this section we describe a procedure for building more non-Einstein nilradicals, even moduli of such, in types $(p,q)$ when $p>6$.  The examples built in this section will be shown to be indecomposable (see Section \ref{section: indecomposability}).  Presently our techniques for building new indecomposable nilalgebras do not work to build examples of such for all types $(p,q)$.  We summarize the results in Theorem \ref{thm: adjoin several extra tuples} and Corollary \ref{thm: summary of indecomp. of types (p,q)}.  The results are also displayed using Figure 2.  The examples presented below are built via concatenation from the preceding examples of type $(p,q)$ with $3\leq p\leq 6$.\\

\textit{Notation.}  Let $A\in \mathfrak{so}(q_1)^n$ and $B\in \mathfrak{so}(q_2)^m$.  Then $C=A+_a B$ will denote a tuple of matrices in $\mathfrak{so}(q_1+q_2)^{n+m-1}$ which is the following concatenation
    $$(A_1,\dots, A_n, \underbrace{0,\dots,0}_{m-1})+_c (\underbrace{0,\dots, 0}_{n-1},B_1,\dots, B_m)$$
If $A$ and $B$ correspond to nilalgebras then so does $C=A+_aB$.
By construction we have $C_n = A_n +_c B_1$; this overlap is chosen so that the corresponding nilalgebras will be indecomposable (see Section \ref{section: indecomposability}).  We call $A+_aB$ the \textit{adjoin} of $A$ and $B$.  

Similarly we can adjoin a set of matrices to another matrix, as follows.  Consider three structure matrices $A\in \mathfrak{so}(q_1)^{p_1}$, $B\in \mathfrak{so}(q_2)^{p_2}$, $C\in \mathfrak{so}(q_3)^{p_3}$.  The adjoin $A +_a \{B, C\} \in \mathfrak{so}(q_1+q_2+q_3)^{p_1+p_2+p_3-2} $ is the following concatenation
    $$(A_1,\dots, A_{p_1}, \underbrace{0, \dots, 0}_{p_2+p_3-2}) +_c ( \underbrace{0,\dots, 0}_{p_1-1}, B_1,\dots,B_{p_2},\underbrace{0\dots,0}_{p_3-1} )+_c ( \underbrace{0,\dots,0}_{p_1-1} ,C_1, \underbrace{0,\dots,0}_{p_2-1},C_2,\dots, C_{p_3} )  $$
This construction deliberately overlaps a matrix from each tuple in  the $p_1$-slot.  This is done so as to insure that the adjoin of indecomposable matrices is again indecomposable (see Section \ref{section: indecomposability}).  Adjoining more than two tuples to another matrix is done analogously.  Although these constructions might appear technical at first, they are engineered to minimize the amount of calculation in the sequel.\\

Consider the examples $C \in \mathfrak{so}(q,\mathbb R)^j$, with $j=3,\dots,6$, which were constructed in the previous section.  We build our new and bigger tuples by adjoining more matrices to such $C$.  
Let $D\in \mathfrak{so}(q)^p$ be a minimal point of $SL_q \mathbb R \times SL_p\mathbb R$ action on $\mathfrak{so}(q)^p$; this is equivalent to the geometric condition that the metric algebra $\mathbb R^{p+q}[D]$ be a Ricci soliton metric which is also geodesic flow invariant (see \cite{Eberlein:prescribedRicciTensor}).  Furthermore, we assume that $D_1$ satisfies $D_1^2= -Id$, hence $q$ is even.

Recall that $D$ is a minimal point of the $SL_q\mathbb R \times SL_p\mathbb R$ action if and only if
    $$ m_1(D)=-2\sum D_i^2 = r Id \ \ \ \mbox{ and } \ \ \  m_2(D)=[<D_i,D_j>]_{ij}=s Id$$
for some $r,s\in \mathbb R$ (cf. Section \ref{section: two-step nilpotent lie algebras and RpqC}).  As a consequence of the assumption that $D_1^2=-Id$, we see that $\tilde D = (\lambda  D_1, \mu D_2, \mu D_3, \dots, \mu D_q)$ satisfies
    $$ \ \ \ \ \ \ \ \ \ \ m_1(\tilde D) = a Id  \ \ \  \mbox{ and } \ \ \ m_2(\tilde D) =diag(b, c, \dots ,c )  $$
Letting the first entry of $D$ rescale by itself will be necessary in the work that follows.

\begin{thm}\label{thm: adjoin one extra tuple} Consider one of our non-Einstein nilradicals $C\in \mathfrak{so}(q_1)^j$, constructed in the previous section, with $q_1 \geq 8$, $j=3,\dots,6$.  Let $D\in \mathfrak{so}(q_2)^p$ be a minimal point for the $SL_q\mathbb R\times SL_p\mathbb R$ action such that $D_1^2=-Id$ and $p$ satisfies $\frac{1}{2}(q_2-2)(q_2-3)+2\leq p \leq \frac{1}{2}q_2(q_2-1)$.  The algebra corresponding to $C+_aD$ is an indecomposable, non-Einstein nilradical of type $(j+p_2-1, q_1+q_2)$.
\end{thm}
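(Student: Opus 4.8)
The three conclusions---type, non-Einstein, and indecomposable---I would treat separately, leaving indecomposability to Section~\ref{section: indecomposability}, since the overlap in the $j$-th slot of $C+_aD$ is engineered precisely for that argument. For the type, set $F:=C+_aD=(F_1,\dots,F_{j+p-1})$; by construction $F_i|_{\mathbb R^{q_1}}=C_i$ for $i\le j$ and $=0$ for $i>j$, while $F_j|_{\mathbb R^{q_2}}=D_1$ and $F_{j+i-1}|_{\mathbb R^{q_2}}=D_i$ for $i\ge 2$. Restricting a relation $\sum_i\alpha_iF_i=0$ to $\mathbb R^{q_1}$ and invoking linear independence of $C_1,\dots,C_j$ forces $\alpha_1=\dots=\alpha_j=0$; restricting what remains to $\mathbb R^{q_2}$ and using independence of $D_2,\dots,D_p$ kills the rest. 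Thus the $F_i$ are independent and $F$ has type $(j+p-1,q_1+q_2)$.

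For the non-Einstein assertion I would run Theorem~\ref{thm: finiteness thm and detecting orbits via H} exactly as in Section~\ref{section 3}. Let $W_C\subset\mathfrak{so}(q_1)^j$ be the block-diagonal subspace used to show $C$ is non-Einstein, and put
\[ W=\{\hat C +_a (\lambda D_1,\mu D_2,\dots,\mu D_p) : \hat C\in W_C,\ \lambda,\mu\in\mathbb R\}, \]
a linear subspace of $\mathfrak{so}(q_1+q_2)^{j+p-1}$ that contains $F$ in its open locus $W^0$ where all parameters are nonzero. I would take $H$ to be the block-scalar group of Section~\ref{section 3} on $\mathbb R^{q_1}$ times $\mathbb R^+Id_{q_2}$ on the $GL(q_1+q_2)$ factor, and, on the $GL(j+p-1)$ factor, the diagonal torus acting independently on the first $j$ slots and by a common scalar on the last $p-1$ slots; then $W$ is $H$-stable. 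The detection step is a moment-map computation: minimality of $D$ gives $\langle D_i,D_\ell\rangle=s\,\delta_{i\ell}$ and $\sum_i D_i^2$ scalar, while $D_1^2=-Id$ lets $D_1$ be rescaled separately without spoiling scalarity, so that for $w\in W$
\[ m_1(w)=\begin{pmatrix} m_1(\hat C)\\ &\ \kappa\, Id_{q_2}\end{pmatrix},\qquad m_2(w)=\mathrm{diag}(\sigma_1,\dots,\sigma_{j+p-1}), \]
where $\kappa=2\lambda^2-\mu^2(2-r)$, $\sigma_i=\langle\hat C_i,\hat C_i\rangle$ for $i<j$, $\sigma_j=\langle\hat C_j,\hat C_j\rangle+\lambda^2 s$, $\sigma_i=\mu^2 s$ for $i>j$, and $m_1(D)=rId$, $m_2(D)=sId$. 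Both values lie in $\mathfrak h$, so $G$ is $H$-detected along $W$; by Theorem~\ref{thm: finiteness thm and detecting orbits via H}, $G\cdot F$ is distinguished if and only if $H\cdot F$ is, and $H\cdot F\subset W^0$.

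It then remains to show $W^0$ carries no $H$-distinguished point. Since $m_1(w)$ is block-scalar and $m_2(w)$ is diagonal, the equation $m(w)\cdot w=Rw$ splits block-by-block into the scalar conditions $2\kappa_{\mathcal B}+\sigma_i=R$ (with $\kappa_{\mathcal B}$ the scalar of $m_1(w)$ on $\mathcal B$), one for each block $\mathcal B$ of $\mathbb R^{q_1}\oplus\mathbb R^{q_2}$ and each slot $i$ on which $w$ does not vanish there. The key observation is that the conditions coming from the blocks of $\mathbb R^{q_1}$ paired with slots $1$ and $2$ coincide with Equations~\ref{eqn: coeff 1}--\ref{eqn: coeff 4} of Section~\ref{section 3}: the adjoined data only modifies the slot-$j$ value $\sigma_j$ and adds fresh $\mathbb R^{q_2}$-block equations, neither of which enters this subsystem. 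Hence the Section~\ref{section 3} manipulation applies unchanged and yields the same inequality, impossible on $W^0$ (where the $b_i\neq0$) precisely because $C$ was chosen with $2k\ge 4n$. So no point of $W^0$ is distinguished; by Theorem~\ref{thm: Einstein nilradical iff distinguished orbit}, $F$ is not an Einstein nilradical, and together with Section~\ref{section: indecomposability} the theorem follows.

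The crux is the detection step and its bookkeeping around the overlap slot $j$. One must verify that seating $\lambda D_1$ in the same slot as $C_j$ leaves $m_2(w)$ diagonal---this is exactly where $\langle D_1,D_i\rangle=0$ for $i\ge2$ (minimality) is used---and leaves $m_1(w)$ scalar on $\mathbb R^{q_2}$, which is where $D_1^2=-Id$ (allowing $D_1$ to be rescaled apart from $D_2,\dots,D_p$) is used. Once detection holds, the non-Einstein conclusion is essentially inherited from $C$, since the obstructing subsystem is untouched by the adjoined piece; the dimension bounds on $p$ enter only to guarantee that a minimal $D$ with $D_1^2=-Id$ exists.
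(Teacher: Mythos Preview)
Your argument is essentially the paper's own: enlarge the block-diagonal subspace $W$ and scalar group $H$ by the two-parameter piece $(\lambda D_1,\mu D_2,\dots,\mu D_p)$, compute $m_1,m_2$ to verify $H$-detection (using minimality of $D$ for diagonality of $m_2$ and $D_1^2=-Id$ for scalarity of $m_1$ on $\mathbb R^{q_2}$), and then observe that the distinguished-point equations coming from the $\mathbb R^{q_1}$-blocks paired with slots $1$ and $2$ reproduce Equations~\ref{eqn: coeff 1}--\ref{eqn: coeff 5} unchanged, so the Section~\ref{section 3} contradiction carries over verbatim. One correction to your closing remark: the bounds $\tfrac{1}{2}(q_2-2)(q_2-3)+2\le p\le\tfrac{1}{2}q_2(q_2-1)$ are imposed to force $D$---and hence the adjoin---to be indecomposable via Lemma~\ref{lemma: indecom for large p}, not to guarantee existence of a minimal $D$ with $D_1^2=-Id$.
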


\textit{Remarks.} 1) The restrictions placed on the size of $D$ are chosen so as to guarantee that $D$, and hence $C+_aD$, is an indecomposable algebra (see Section \ref{section: indecomposability}).

2) By varying $C$ one easily obtains non-isomorphic algebras $C+_aD$.  We leave these details to the reader as they are similar to earlier work.

\begin{proof}  The proof of indecomposablity is contained in Section \ref{section: indecomposability}.  We prove here that $C+_aD$ is non-Einstein.
As in the previous section we will exploit Theorem \ref{thm: finiteness thm and detecting orbits via H} by finding a subspace $W$ and a group $H$ along which $G= GL(q_1+q_2)\times GL(j+p-1)$ is $H$-detected.  We present the details when $C\in \mathfrak{so}(2k+4n)^j$ as the other cases are identical, mutatis mutandis.\\

Consider a vector space
    $$W = \{ a_1A_1 +_c (b_1B_1, c_1B_2) +_c\dots +_c (b_{n-1}B_1,c_{n-1}B_2)+_c (d_1B_1,\dots,d_jB_j)+_a (\lambda D_1, \mu D_2,\dots,\mu D_p)\}$$
where $ a_1, b_i,c_i,d_i,\lambda,\mu \in \mathbb R$.  The only difference between this subspace and the $W$ used in the previous section is that we have adjoined a small piece which is two dimensional.  Let $H$ be the subgroup of $G=GL(q_1+q_2)\times GL(j+p-1)$ which rescales the various components of $W$, that is,
    $$(\mathbb R \cdot Id_{2k} \times \mathbb R \cdot Id_4 \times \dots \times \mathbb R\cdot Id_4 \times \mathbb R \cdot Id_{q_2})\times (diag(j\times j)\times \mathbb R\cdot Id_{p-1})$$
This group preserves $W$ and, moreover, we will show that $G$ is $H$-detected along $W$.

Consider  $w =(C_1,C_2,\dots, C_p)= a_1A_1 +_c (b_1B_1, c_1B_2) +_c\dots +_c (b_{n-1}B_1,c_{n-1}B_2)+_c (d_1B_1,\dots,d_jB_j)+_a (\lambda D_1, \mu D_2,\dots,\mu D_p)$.  We compute $m_1(w)$ and $m_2(w)$ where $m=(m_1,m_2)$ is the moment map of the $G$-action on $V$ (cf. remarks following Theorem \ref{thm: Eberlein - isom classes}).
    \begin{eqnarray*}
    m_1(w) &=& -2\sum_i C_i^2 =    \left[\begin{BMAT}{cccccc}{cccccc}
        2a_1^2 Id_{2k}& &&&&\\
          &2(b_1^2+c_1^2)Id_4 &&&&\\
         && \ddots &&&\\
         &&& 2(b_n^2+c_n^2)Id_4&&\\
         &&&& 2(d_1^2 +\dots + d_j^2) Id_4   &\\
         &&&&& 2r\ Id_{q_2}
         \addpath{(0,5,.)rudrdlu}
         \addpath{(3,3,.)rldruddurdlrrld}
    \end{BMAT}\right] 
    \notag \\
    m_2(w)&=& \begin{bmatrix} <C_i,C_j>\end{bmatrix} =
    \begin{bmatrix} 2ka_1^2 +4(\sum_{i=1}^n b_i^2)  \\ & 4(\sum_{i=1}^n c_i^2 ) \\ && 4d_3^2 \\ &&& \ddots \\ &&&& 4d_j^2+\lambda^2|D_1^2| \\ &&&&& \mu^2|D_1^2| \\ &&&&&& \ddots \\ &&&&&&& \mu^2|D_1^2| \end{bmatrix} 
    \end{eqnarray*}
As before we are simplifying presentation by writing $b_n=d_1$ and $c_n=d_2$. We are also using the fact that $D$ being a minimal point implies $|D_1|=|D_2|=\dots=|D_p|$.
Moreover, we have applied the observations concerning $\tilde D = (\lambda D_1, \mu D_2,\dots ,\mu D_p)$ which precede this theorem.

We observe that these computations show that $G=GL(q_1+q_2,\mathbb R)\times GL(j+p-1,\mathbb R)$ is $H$-detected along $W$, see Definition \ref{def: H detected along W}.  By Theorem \ref{thm: finiteness thm and detecting orbits via H} we know that for each $w\in W$, $G\cdot w \cap W$ is a finite union of $H$-orbits and hence finding moduli of $H$-orbits is equivalent to finding moduli of $G$-orbits.  Moreover, the $G$ orbit of $w$ is distinguished if and only if the $H$ orbit is distinguished.  We complete our proof by showing that the $H$ orbit of $w$ is not distinguished.  This is done using Equations \ref{eqn: coeff 1} -- \ref{eqn: coeff 6}.

Consider the equation $m(w)\cdot w = \lambda w$, the condition to be a distinguished point.  As in the previous section, we consider the `coefficients' of this equation.  If $w$ were a distinguished point, we would obtain the following set of equalities (this is not the complete list, only the portion needed for our proof).  This set differs from Equations \ref{eqn: coeff 1} -- \ref{eqn: coeff 6} only in the last two equations.

\begin{eqnarray}
                4a_1^2         &+& 2ka_1^2 + 4 \sum_{i=1}^n b_i^2 \\
                4(b_I^2+c_I^2) &+& 2ka_1^2 + 4 \sum_{i=1}^n b_i^2 \ \ \ \ \ \ \mbox{ (for } I=1,\dots,n-1) \\
                4(b_I^2+c_I^2) &+& 4 \sum_{i=1}^n c_i^2 \ \ \ \ \ \ \ \ \ \ \ \ \ \ \ \ \mbox{ (for } I=1,\dots,n-1) \\
                4(b_n^2+c_n^2) + 4(d_3^2+\dots +d_j^2) &+& 2ka_1^2 + 4 \sum_{i=1}^n b_i^2 \\
                4(b_n^2+c_n^2) + 4(d_3^2+\dots +d_j^2) &+& 4 \sum_{i=1}^n c_i^2\\
                4(b_n^2+c_n^2) + 4(d_3^2+\dots +d_j^2) &+& d_I^2 \ \ \ \ \ \ \ \ \ \ \ \ \ \ \ \ \ \ \ \ \ \ \ \mbox{ (for } I=3,\dots, j-1) \\
                4(b_n^2+c_n^2) + 4(d_3^2+\dots +d_j^2) &+& d_j^2+\lambda^2
                \end{eqnarray}
with $b_n=d_1$ and $c_n=d_2$.

Observe that the first five equations are only conditions on $C$ and these are precisely Equations \ref{eqn: coeff 1}--\ref{eqn: coeff 5}.  Moreover, in the proof of Theorem \ref{thm: non-Einstein for p=2-6} only these five equations were used to show that $C$ was not an Einstein nilradical.  Hence, the $H$ orbit of $C+_aD$ is not distinguished and so $C+_aD$ is a non-Einstein nilradical.
\end{proof}

\subsection*{More general procedures}

We observe that the bulk of the work in proving Theorem \ref{thm: adjoin one extra tuple} is reducing to the problem to the proof of Theorem \ref{thm: non-Einstein for p=2-6}.  Closer inspection of the proof above reveals that much more can be accomplished.

Let $\{D^1,\dots,D^k\}$ be a collection of tuples $D^i\in \mathfrak{so}(q_i)^{p_i}$ which satisfy the hypothesis on $D$ in Theorem \ref{thm: adjoin one extra tuple}; that is, each $D^i$ is a minimal point for the $SL_{q_i}\times SL_{p_i}$ action, $D^i_1$ squares to a multiple of the identity, and $p_i\in [ \frac{1}{2}(q_i-2)(q_i-3)+2, \frac{1}{2}q_i(q_i-1) ]$.  Then the following is true.

\begin{thm}\label{thm: adjoin several extra tuples}  Consider one of the non-Einstein nilradicals $C\in \mathfrak{so}(q)^j$ defined in the previous section with $q\geq 8$ and $j\in [3,6]$.  The adjoin $C+_a \{D^1,\dots, D^k \}$ is an indecomposable, non-Einstein nilradical of type $(j+p_1+\dots +p_k-k,q+q_1+\dots+q_k)$.
\end{thm}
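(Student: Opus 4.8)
The plan is to follow exactly the blueprint of Theorem \ref{thm: adjoin one extra tuple}, but now adjoining $k$ tuples $D^1,\dots,D^k$ simultaneously rather than a single $D$. The key observation, already flagged in the preceding remarks, is that the entire non-Einstein argument reduces to the first five coefficient equations \ref{eqn: coeff 1}--\ref{eqn: coeff 5}, which involve only the ``core'' data $a_1,b_i,c_i,d_i$ coming from $C$; the adjoined pieces only contribute additional equations lower in the list, which are never needed. So the strategy is: (i) set up the right subspace $W$ and subgroup $H$, (ii) verify $G$ is $H$-detected along $W$ by computing the moment map, and (iii) read off that the obstruction from Theorem \ref{thm: non-Einstein for p=2-6} survives untouched.

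First I would define the subspace
$$W = \{ a_1A_1 +_c (b_1B_1,c_1B_2) +_c \dots +_c (d_1B_1,\dots,d_jB_j) +_a (\lambda_1 D^1_1, \mu_1 D^1_2,\dots) +_a \dots +_a (\lambda_k D^k_1,\mu_k D^k_2,\dots) \},$$
where each adjoined tuple $D^i$ contributes two scaling parameters $\lambda_i,\mu_i$ (the first matrix rescaled independently, using $(D^i_1)^2 = -Id$, exactly as in the single-adjoin case). Correspondingly I would take $H$ to be the product of the $\mathbb R^+$-scalings on each block of $\mathbb R^{q+q_1+\dots+q_k}$ together with the diagonal torus in the $GL(j+\sum p_i - k)$ factor, so that $H$ preserves $W$. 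The type count $(j+p_1+\dots+p_k-k,\ q+q_1+\dots+q_k)$ comes directly from the definition of the iterated adjoin $+_a$, since each adjoin overlaps one matrix and so reduces the total $p$-count by one per adjoined tuple.

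Next I would compute $m_1(w) = -2\sum_i C_i^2$ and $m_2(w) = [\langle C_i,C_j\rangle]$ for a general $w \in W$. Because the blocks are concatenated block-diagonally, $m_1(w)$ is block-diagonal and each $D^i$ contributes a scalar block $2r_i\,Id_{q_i}$ (using that $D^i$ is $SL_{q_i}\times SL_{p_i}$-minimal, so $m_1(D^i)$ is a multiple of the identity); likewise $m_2(w)$ is diagonal, with the entries in the first $j$ slots being precisely the expressions appearing in Theorem \ref{thm: non-Einstein for p=2-6} plus possible $\lambda_i^2$ contributions in the overlap slots, and the remaining entries being pure $\mu_i^2|D^i_1|^2$ terms. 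The crucial point is that these computations exhibit $m(w) \in \h$, i.e. $G$ is $H$-detected along $W$ (Definition \ref{def: H detected along W}), so Theorem \ref{thm: finiteness thm and detecting orbits via H} applies and the $G$-orbit of $w$ is distinguished iff its $H$-orbit is.

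Finally, to show no point in the open set where all parameters are nonzero is distinguished, I would write out the coefficient equalities forced by $m(w)\cdot w = \lambda w$. The first five of these are identical to Equations \ref{eqn: coeff 1}--\ref{eqn: coeff 5}, since the adjoined data only alters the equations in slots $\geq j$ (and those are precisely the equations left unused in the proof of Theorem \ref{thm: non-Einstein for p=2-6}). The hypothesis $q \geq 8$, i.e. $2k \geq 4n$ in the notation there, then yields the contradiction $0 < (2k+n)\sum b_i^2 \leq (4n-2k)\sum c_i^2 \leq 0$, so the $H$-orbit is not distinguished and $C +_a \{D^1,\dots,D^k\}$ is a non-Einstein nilradical. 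I would cite Section \ref{section: indecomposability} for indecomposability. The main obstacle I anticipate is purely bookkeeping: correctly tracking the overlap slots of the iterated adjoin and confirming that adjoining several tuples never disturbs the five core equations — the geometry is settled once $H$-detection is verified, so the real care is in the combinatorics of which parameter lands in which slot of $m_2(w)$.
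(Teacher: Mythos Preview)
Your proposal is correct and follows exactly the approach the paper intends: the paper itself declines to give details, stating only that ``the proof of this theorem is a slight modification of the proof of Theorem \ref{thm: adjoin one extra tuple}.  The details are left to the reader.'' Your outline \emph{is} that modification---you correctly identify that the adjoined tuples contribute only to coefficient equations beyond the first five, so Equations \ref{eqn: coeff 1}--\ref{eqn: coeff 5} survive verbatim and the contradiction from Theorem \ref{thm: non-Einstein for p=2-6} goes through unchanged; the one bookkeeping point to watch is that $H$ on the $GL(j+\sum p_i - k)$ factor should be $diag(j\times j)\times \mathbb R\cdot Id_{p_1-1}\times\cdots\times\mathbb R\cdot Id_{p_k-1}$ rather than the full diagonal torus, so that your $W$ (with a single $\mu_i$ per adjoined tuple) is genuinely $H$-stable.
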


The proof of this theorem is a slight modification of the proof of Theorem \ref{thm: adjoin one extra tuple}.  The details are left to the reader.\\

\textit{Remark}.  Such $D$ always exist for $q$ even and $p=\frac{1}{2}q(q-1)$ and $p=\frac{1}{2}q(q-1)-1$.  To see this start with a matrix $D_1$ whose square is $-Id$.  Then extend $D_1$  to an orthogonal basis of $\mathfrak{so}(q)$ whose elements all have the same length.  This gives a tuple with $p=\frac{1}{2}q(q-1)$ which satisfies the desired hypotheses of Theorem \ref{thm: non-Einstein for p=2-6}.  To extend this construction to smaller $p$, one would just choose elements $D_2, D_3,\dots$ of the basis with the property that they also square to $-Id$.  By removing these elements from the list of basis elements one obtains tuples with the desired properties.\\

Applying the theorem above with each tuple $D^i$ as a subset of $\{B_1,\dots,B_6\}$ defined in Section \ref{section 3} we obtain

\begin{cor} \label{thm: summary of indecomp. of types (p,q)}  For $(p,q)$ satisfying $8\leq q$ and $2\leq p \leq \frac{5}{4}q-8$, there exist indecomposable non-Einstein nilradicals of type $(p,q)$.  Moreover, most of these types admit moduli of such algebras.  The dimension of said moduli is bounded below by $\frac{1}{8}[q-\frac{4}{5}(p+8)-7]$.
\end{cor}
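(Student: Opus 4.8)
The plan is to treat this statement as a bookkeeping corollary of Theorem \ref{thm: adjoin several extra tuples}: I would realize each pair $(p,q)$ in the stated range as the type of an explicit adjoin $C +_a \{D^1,\dots,D^k\}$, and then read off existence, indecomposability, and a lower bound for the moduli directly from the types of the pieces. Indecomposability and the non-Einstein property require no new argument here, since they are exactly the conclusions of Theorem \ref{thm: adjoin several extra tuples}; so the entire content reduces to (i) verifying that the available building blocks satisfy the hypotheses of that theorem and (ii) an arithmetic optimization.

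First I would assemble the building blocks $D^i$ from the matrices of Section \ref{section 3}. Since $\{B_1,\dots,B_6\}$ is an orthogonal set in $\mathfrak{so}(4)$ with $B_i^2 = -Id$ and all $B_i$ of equal length, any subset $D^i = (B_{i_1},\dots,B_{i_{p_i}})$ of size $p_i$ satisfies $m_1(D^i) = -2\sum_\ell B_{i_\ell}^2 = 2p_i\,Id$ and $m_2(D^i) = [-tr(B_{i_\ell}B_{i_m})] = 4\,Id$. Hence each such subset is a minimal point for the $SL_4\mathbb R \times SL_{p_i}\mathbb R$ action, its first entry squares to $-Id$, and for $q_i = 4$ the admissibility interval $[\tfrac12(q_i-2)(q_i-3)+2,\ \tfrac12 q_i(q_i-1)]$ is exactly $[3,6]$. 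Thus every subset of $\{B_1,\dots,B_6\}$ of size $p_i \in \{3,4,5,6\}$ is a legitimate tuple $D^i$ for Theorem \ref{thm: adjoin several extra tuples}.

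Next I would carry out the arithmetic covering the range. Taking a base example $C$ of type $(j,q_0)$ from Section \ref{section 3} with $j \in \{3,\dots,6\}$ and adjoining $k$ such subsets of sizes $p_1,\dots,p_k$, Theorem \ref{thm: adjoin several extra tuples} produces an indecomposable non-Einstein nilradical of type $\bigl(j + \sum_{i}(p_i-1),\ q_0 + 4k\bigr)$. Writing $p = j + \sum_i(p_i-1)$ and $q = q_0 + 4k$, each adjoined block contributes $p_i - 1 \in \{2,3,4,5\}$ to $p$ and $4$ to $q$; because these increments run through consecutive integers, any target $p$ in the attainable window is hit by a suitable choice of $k$ and the $p_i$. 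The cases $p = 2$ (and $p \le 6$ generally) are handled by the Section \ref{section 3} examples themselves with $k=0$. The upper bound $p \le \tfrac54 q - 8$ is the extreme of this process: maximizing $p$ by taking $j = 6$ and every $p_i = 6$ gives $p \le 6 + 5k$ with $q = q_0 + 4k$ and $q_0 \ge 8$, whence $k \le (q-8)/4$; the stated constant absorbs the requirement $q_0 \ge 8$ together with the parity adjustment $d \in \{0,3\}$ (concatenation of the type $(2,3)$ soliton) used to match $q$ modulo $2$.

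Finally I would extract the moduli. The only free continuous parameters are the $t_i$ in the base example, which by Theorem \ref{thm: non-Einstein for p=2-6} give an $(n-1)$-dimensional family of pairwise non-isomorphic algebras; adjoining the fixed tuples $D^i$ preserves non-isomorphism because, exactly as in Theorem \ref{thm: adjoin several extra tuples}, one produces an $H$-stable subspace $W$ along which $G$ is $H$-detected and invokes Theorem \ref{thm: finiteness thm and detecting orbits via H}. To make $n$ as large as possible I would use the fewest adjoined blocks consistent with reaching $p$, i.e. $k \approx (p-6)/5$, leaving $q_0 = q - 4k$ for the base; the base constraints $q_0 = 2k_0 + 4n + d$ with $2k_0 \ge 4n + d$ force $n \le (q_0 - 2d)/8$, so the moduli dimension is at least $n-1 \ge \tfrac18\bigl[q - \tfrac45(p+8) - 7\bigr]$ after substituting $k \approx (p-6)/5$ and absorbing the floors and $d \le 3$ into the constants. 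The main obstacle lies entirely in this last optimization: one must simultaneously hit an arbitrary $(p,q)$ exactly and split it so as to maximize the residual dimension $n$, all while respecting the integrality of $k$ and the $p_i$ and the parity of $q$. Tracking these floors and parity corrections honestly is precisely what forces the bound to be crude; the geometric input is already supplied by the minimality of the $D^i$, the $H$-detection mechanism, and Theorems \ref{thm: non-Einstein for p=2-6} and \ref{thm: adjoin several extra tuples}.
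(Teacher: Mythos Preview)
Your proposal is correct and follows exactly the approach the paper indicates: the paper's own proof consists of the single sentence ``Applying the theorem above with each tuple $D^i$ as a subset of $\{B_1,\dots,B_6\}$ defined in Section~\ref{section 3}'' together with the remark that the moduli bound ``is easily derived from the procedure by which these algebras are constructed.'' You have simply unpacked this sketch --- verifying that subsets of $\{B_1,\dots,B_6\}$ of size $3$ to $6$ are $SL_4\times SL_{p_i}$-minimal with first entry squaring to $-Id$, matching the combinatorics $p = j + \sum(p_i-1)$, $q = q_0 + 4k$ to the stated range, and optimizing $n$ in the base example to get the moduli lower bound --- which is precisely what the paper leaves to the reader.
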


The corollary is clearly very crude as can be seen in the figures (below) which summarize our current knowledge.  The lower bound on the dimension is easily derived from the procedure by which these algebras are constructed.

\begin{figure}[htp]\label{fig: general types}
\centering
\includegraphics[height=1.75in]{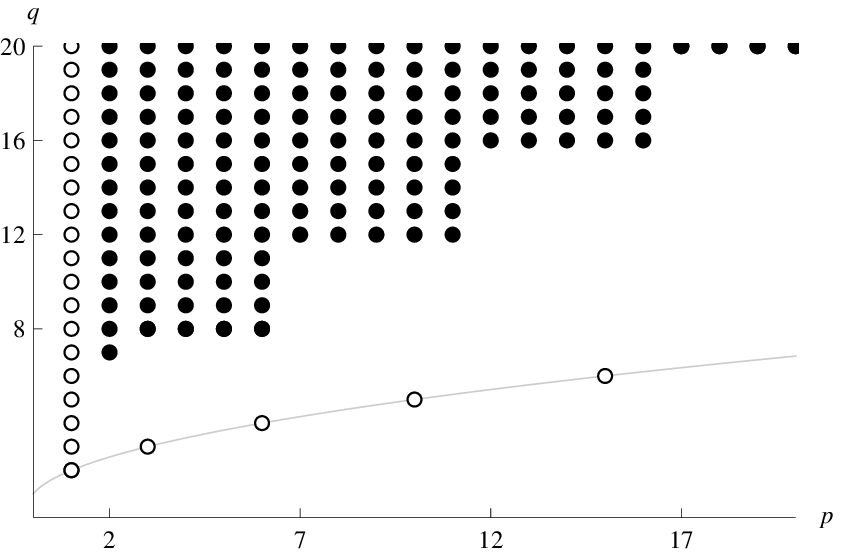}
\hfill
\includegraphics[height=1.75in]{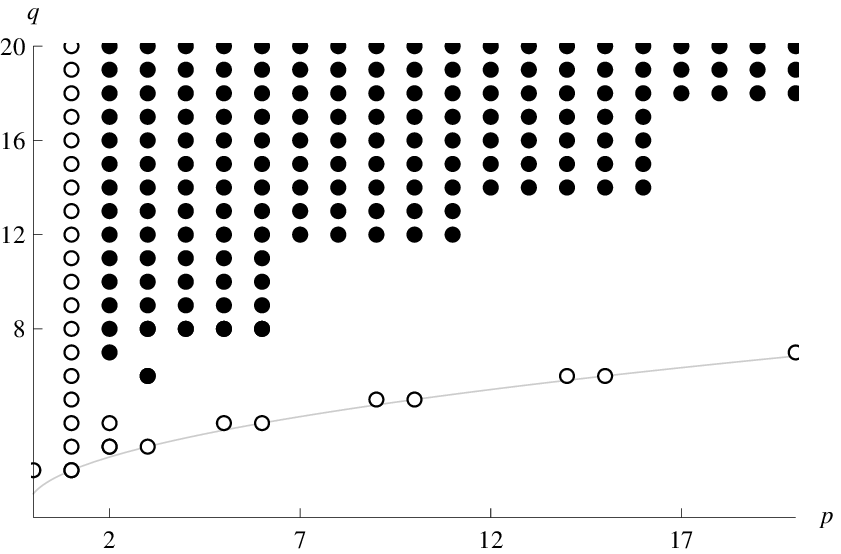}
\caption{Types $(p,q)$ which do or do not admit indecomposable non-Einstein nilradicals.}
\end{figure}

\textit{Remarks}. (1)  As before (cf. Figure 1) a circle represents a type $(p,q)$ for which all algebras are Einstein nilradicals while a disk represents a type $(p,q)$ for which there exists at least one non-Einstein nilradical.

(2)  The left image in Figure 2 represents the work that has been presented thus far in this paper.  The right image in Figure 2 represents some results from other papers and more examples that can be squeezed out by our techniques.  In \cite{Will:CurveOfNonEinsteinNilradicals} one can find the fact that all nilpotent Lie algebras of dimension $\leq 6$ are Einstein nilradicals and the construction of a non-Einstein nilradical of type $(3,6)$.  In \cite{Nikolayevsky:EinsteinSolvmanifoldsattchedtoTwostepNilradicals} one may find the fact that all algebras of type $(D-1,q)$ are Einstein nilradicals.\\

\section{Indecomposability}\label{section: indecomposability}

In this section we address the issue of indecomposability; i.e., showing that all the examples constructed above cannot decompose as a direct sum of ideals.  One way to construct a direct sum of ideals, using structure matrices, is as follows.

Let $\N_1, \N_2$ be   two-step nilpotent Lie algebras with adapted bases $\{ X_1, \dots , X_q, Z_1,\dots , Z_p \}$ and $\{ Y_1, \dots , Y_r ,$ $ W_1 , \dots , W_s \} $, respectively.  From these bases we can construct structure matrices $A\in \mathfrak{so}(q)^p$ and $B\in \mathfrak{so}(r)^s$.  Now define $\tilde A = (A, 0,\dots ,0) \in \mathfrak{so}(q)^{p+s}$ and $\tilde B = (0,\dots, 0, B)\in \mathfrak{so}(r)^{p+s}$.  Then the concatenation $\tilde A +_c \tilde B \in \mathfrak{so}(q+r)^{p+s}$ is the structure matrix of $\N_1 \oplus \N_2$ corresponding to the adapted basis $\{ X_i, Y_i, Z_j,W_j\}$.

Essentially this is the only way to concatenate  matrices to achieve a direct sum of ideals, as we show below.  In the work that follows, we assume that our nilalgebras have no Euclidean de Rham factor; i.e., the commutator is the center.  All of the examples used in this work satisfy this hypothesis.

\begin{lemma}\label{lemma: V=V_1+V_2 as kernels}  Let $\N=\V + \Z$ be a nilalgebra of type $(p,q)$.  Assume that $\N$ has no Euclidean deRham factor, that is, $[\N,\N]=\Z$ is the center.  Let $C = (C_1,\dots,C_p)\in \mathfrak{so}(q)^p$ be a set of structure matrices of $\N$  corresponding to some adapted basis.

Then $\N$ is decomposable as a direct sum of ideals if and only if there exists a basis $\{A_1\dots, A_l\} \cup \{ B_1 , \dots, B_k\}$ of $span<C_1,\dots, C_p>\subset \mathfrak{so}(q)$ such that $\V = \V_1  \oplus \V_2 $ where $\V_2  = \cap_{i=1}^l Ker \ A_i$ and $\V_1  = \cap_{j=1}^k Ker \ B_j$.
\end{lemma}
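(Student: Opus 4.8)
The plan is to characterize decomposability in terms of how the structure matrices $C_i$ respect a splitting of $\mathcal V$, translating the algebraic notion of "direct sum of ideals" into the linear-algebraic notion of a common invariant decomposition on which the two families of matrices act separately. The key observation is that under the hypothesis $[\mathfrak N,\mathfrak N]=\mathfrak Z$, an ideal decomposition $\mathfrak N=\mathfrak N_1\oplus\mathfrak N_2$ forces a compatible splitting of both $\mathcal V$ and $\mathcal Z$, and the span of the structure matrices must split accordingly.

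First I would prove the easy direction. Suppose $\mathfrak N=\mathfrak N_1\oplus\mathfrak N_2$ is a direct sum of ideals. Write $\mathfrak N_i=\mathcal V_i\oplus\mathcal Z_i$ where $\mathcal Z_i=[\mathfrak N_i,\mathfrak N_i]$ is the center of $\mathfrak N_i$ (using that neither factor has a Euclidean de Rham factor, which follows from the same assumption on $\mathfrak N$). Choose adapted bases of $\mathfrak N_1$ and $\mathfrak N_2$; concatenating them as in the discussion preceding the lemma yields structure matrices whose span decomposes as $\{A_1,\dots,A_l\}\cup\{B_1,\dots,B_k\}$, where the $A_i$ are supported on $\mathcal V_1$ (vanishing on $\mathcal V_2$) and the $B_j$ supported on $\mathcal V_2$. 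Since each $A_i$ kills $\mathcal V_2$ we get $\mathcal V_1\subseteq\bigcap_i\ker A_i$, and because the $B_j$ span the brackets landing in $\mathcal Z_2$ and $\mathfrak N$ has trivial center outside $\mathcal Z$, one checks $\mathcal V_2=\bigcap_i\ker A_i$ exactly; symmetrically $\mathcal V_1=\bigcap_j\ker B_j$. This gives the desired basis of $\mathrm{span}\langle C_1,\dots,C_p\rangle$.

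For the converse, suppose such a basis $\{A_i\}\cup\{B_j\}$ exists with $\mathcal V=\mathcal V_1\oplus\mathcal V_2$, $\mathcal V_2=\bigcap_i\ker A_i$, $\mathcal V_1=\bigcap_j\ker B_j$. I would first argue that each $A_i$ preserves the splitting (it annihilates $\mathcal V_2$, and by skew-symmetry its image lies in $\mathcal V_1$, so $A_i(\mathcal V_1)\subseteq\mathcal V_1$ and $A_i(\mathcal V_2)=0$), and symmetrically for the $B_j$. Consequently the brackets $[\mathcal V_1,\mathcal V_1]$, spanned via the $A_i$, and $[\mathcal V_2,\mathcal V_2]$, spanned via the $B_j$, land in complementary subspaces $\mathcal Z_1,\mathcal Z_2$ of $\mathcal Z$, while $[\mathcal V_1,\mathcal V_2]=0$ since no structure matrix couples the two blocks. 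Setting $\mathfrak N_i=\mathcal V_i\oplus\mathcal Z_i$ then exhibits $\mathfrak N$ as a direct sum of ideals.

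The main obstacle, and the step deserving the most care, is pinning down that the kernels are \emph{exactly} $\mathcal V_2$ and $\mathcal V_1$ rather than merely containing them, and that the images $\mathcal Z_1=\mathrm{span}\,\mathrm{im}(A_i)$ and $\mathcal Z_2=\mathrm{span}\,\mathrm{im}(B_j)$ are genuinely complementary in $\mathcal Z$ with $\mathcal Z=\mathcal Z_1\oplus\mathcal Z_2$. This is precisely where the hypothesis $[\mathfrak N,\mathfrak N]=\mathfrak Z$ (no Euclidean de Rham factor) is essential: it guarantees the structure matrices jointly have trivial common kernel on each block, so the decomposition of $\mathcal V$ is faithfully reflected by a decomposition of $\mathcal Z$ and no "mixing" survives. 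I would handle this by a dimension count together with the skew-symmetry of the $C_i$, verifying that the rank conditions force $\mathcal Z_1\cap\mathcal Z_2=0$ and that their sum exhausts $\mathcal Z$.
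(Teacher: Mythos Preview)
Your converse argument contains a genuine error. You claim that ``by skew-symmetry its image lies in $\mathcal V_1$, so $A_i(\mathcal V_1)\subseteq\mathcal V_1$.'' Skew-symmetry of $A_i\in\mathfrak{so}(q)$ is with respect to the \emph{standard} inner product on $\mathbb R^q$, and gives only $\mathrm{im}\,A_i=(\ker A_i)^\perp\subseteq\mathcal V_2^\perp$. But nothing in the statement says the decomposition $\mathcal V=\mathcal V_1\oplus\mathcal V_2$ is orthogonal, so in general $\mathcal V_2^\perp\neq\mathcal V_1$ and $A_i$ need \emph{not} preserve $\mathcal V_1$. The paper flags exactly this point in the remark following the lemma: ``It is not true that $\mathcal V_1$ must be stable under $\{A_i\}$. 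This is because the direct sum $\mathcal V_1\oplus\mathcal V_2$ is not necessarily orthogonal.'' Your subsequent claims that $[\mathcal V_1,\mathcal V_2]=0$ and that the $\mathcal Z_i$ are complementary rest on this invariance, so they collapse as well.

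The paper repairs this by an explicit change of coordinates: choose $g\in GL(q,\mathbb R)$ carrying $\mathcal V_1,\mathcal V_2$ to the standard coordinate blocks $\mathrm{span}\langle e_1,\dots,e_l\rangle$ and $\mathrm{span}\langle e_{l+1},\dots,e_q\rangle$ (which \emph{are} orthogonal), replace $C$ by $\tilde C=(g^t)^{-1}\cdot C$, and argue in the isomorphic algebra $\mathbb R^{p+q}[\tilde C]$. After this transport the kernel conditions really do force block structure, and the ideal decomposition follows. A similar transport is needed in your forward direction as well: the structure matrices $\tilde A_i,\tilde B_j$ coming from the concatenated adapted basis of $\mathfrak N_1\oplus\mathfrak N_2$ do not a priori lie in $\mathrm{span}\langle C_1,\dots,C_p\rangle\subset\mathfrak{so}(q)$; one must push them over via the $GL(q)\times GL(p)$ action relating the two adapted bases, which is what produces $A_i=g\tilde A_ig^t$ and $\mathcal V_i=(g^t)^{-1}\tilde{\mathcal V}_i$.
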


\textit{Remarks.} (1) It is not true that $\V_1$ must be stable under $\{A_i\}$.  This is because the direct sum $\V_1\oplus\V_2$ is not necessarily orthogonal.  One could choose an inner product so that this direct sum is orthogonal.  However,  fixing an inner product, a priori, it is easy to construct examples of direct sums of ideals which cannot decompose as an orthogonal direct sum of ideals.

(2)  Here we have assumed that $\Z=[\N,\N]$ as otherwise the algebra would decompose as a sum of ideals in a trivial way.  Moreover, having no Euclidean de Rham factor is precisely the condition to achieve the equalities $\V_2  = \cap_{i=1}^l Ker \ A_i$ and $\V_1  = \cap_{j=1}^k Ker \ B_j$; when the Euclidean de Rham factor is present one only has containment in one direction.

\begin{proof}  Let $C =(C_1,\dots ,C_p)$ be a set of structure matrices with respect to some adapted basis for $\N$.

Suppose that $\N$ is decomposable as a sum of ideals $\N_1 \oplus \N_2$.    Let $\{X_1,\dots, X_l,Z_1,\dots,Z_K\}$ be an adapted basis for $\N_1$ and let $\{X_{l+1},\dots,X_q,Z_{K+1},\dots,Z_p\}$ be an adapted basis for $\N_2$.  Taking the union of these bases we have an adapted basis for $\N$, see Section \ref{section: two-step nilpotent lie algebras and RpqC}.  Let $\{\tilde A_1,\dots, \tilde A_l,\tilde B_1,\dots,\tilde B_{p-l}\}$ denote the set of structure matrices for $\N$ with respect to this choice of adapted basis.  We know that there exists $(g,h)\in GL(q,\mathbb R)\times GL(p,\mathbb R)$ such that $(g,h)\cdot (\tilde A_1,\dots) = C$; that is, $\{g\cdot \tilde A_i\}\cup\{g\cdot \tilde B_j\}$ forms a basis of the span $<C_1,\cdots,C_p>\subset \mathfrak{so}(q)^p$.  Define $A_i=g\cdot \tilde A_i = g\tilde A_ig^t$ and $B_i=g\cdot \tilde B_i = g\tilde B_ig^t$.

Define $\tilde \V_1 = span<e_1,\dots, e_l>$, $\tilde \V_2=span<e_{l+1},\dots,e_q> \subset \mathbb R^q$ and $\V_1 = (g^t)^{-1}\tilde \V_1$, $\V_2 = (g^t)^{-1}\tilde \V_2$.  Observe that $\tilde \V_1 = \cap Ker\ \tilde B_i$, $\tilde \V_2=\cap Ker\ \tilde A_i$ and, hence, $\V_1=\cap Ker\ B_i$, $\V_2=\cap Ker\ A_i$.  This proves one direction of the theorem.

Now suppose that we have a two-step nilalgebra $\N=\V\oplus \Z$, a set of structure matrices $C=( A_1,\dots,  A_K,$ $ B_1,\dots, B_{p-K})$ (with respect to some adapted basis), and $\V_1,\V_2$ such that $\V=\V_1\oplus \V_2$ and $\V_1=\cap Ker\ B_i$, $\V_2=\cap Ker\ A_j$.  Choose $g\in GL(q,\mathbb R)$ such that $\tilde \V_1 = g\cdot \V_1 = span<e_1,\dots,e_l>$ and $\tilde \V_2 = g\cdot \V_2 = span<e_{l+1},\dots,e_q>$.  Define $\tilde C = (g^t)^{-1}\cdot C$.

Consider the metric two-step nilalgebra $\mathbb R^{p+q}[\tilde C]$ which is isomorphic to $\N$.  Recall that $\mathbb R^{p+q}[\tilde C]$ has the inner product so that $\{e_1,\dots, e_q,e_{q+1},\dots,e_p\}$ is orthonormal and the bracket relations are defined by $<[e_i,e_j],e_{q+k}>=(\tilde C_k)_{ij}$.  Now observe that $\tilde \V_2 = \cap Ker\ \tilde A_i$ being stable under each $\tilde A_i$ implies that $\tilde \V_1 = \tilde \V_2^\perp$ is stable under each $\tilde A_i$.  Similarly, $\tilde \V_2$ is stable under each $\tilde B_j$.  As the Lie bracket is described by $<[e_i,e_j],e_{q+k}>=(\tilde C_k)_{ij}$ and $(\tilde C_k)_{ji}=<\tilde C_k\ e_i,e_j>$, we see that $\mathbb R^{p+q}[\tilde C] = \N_1 \oplus \N_2$ where $\N_1 = \tilde \V_1\oplus span<e_{q+1},\dots, e_{q+K}>$ and $\N_2 = \tilde \V_2\oplus span<e_{q+K+1},\dots,e_{q+p}>$.
\end{proof}

\begin{prop} Let $A\in \mathfrak{so}(q_1)^{p_1}$ and $B\in \mathfrak{so}(q_2)^{p_2}$ be indecomposable algebras with $p_1<p_2$.  Then the concatenation $A+_cB \in \mathfrak{so}(q_1+q_2)^{p_2}$ is also indecomposable.
\end{prop}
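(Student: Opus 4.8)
The plan is to argue by contradiction using Lemma~\ref{lemma: V=V_1+V_2 as kernels}, exploiting the block-diagonal form of the concatenation. Write $\V=\V_A\oplus\V_B$ with $\V_A\cong\mathbb{R}^{q_1}$, $\V_B\cong\mathbb{R}^{q_2}$, and recall that $C=A+_cB$ has $C_i=\mathrm{diag}(A_i,B_i)$ with the convention $A_i=0$ for $i>p_1$. Thus every element of $\mathrm{span}\langle C_1,\dots,C_{p_2}\rangle\subset\mathfrak{so}(q_1+q_2)$ is block diagonal for $\V_A\oplus\V_B$. First I would record the bracket structure of the associated nilalgebra $\N$: one has $[\V_A,\V_B]=0$, the subspace $[\V_A,\V_A]$ equals the central subspace $\Z_A:=\mathrm{span}\langle Z_1,\dots,Z_{p_1}\rangle$, and $[\V_B,\V_B]$ is all of $\Z$; in particular $[\N,\N]=\Z$, so $\N$ has no de Rham factor and the Lemma applies.

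Assume $\N$ decomposes as a direct sum of ideals. By the Lemma there is a basis $\{P_1,\dots,P_l\}\cup\{Q_1,\dots,Q_k\}$ of $\mathrm{span}\langle C_1,\dots,C_{p_2}\rangle$ and a splitting $\V=\V_1\oplus\V_2$ with $\V_2=\cap_i\mathrm{Ker}\,P_i$ and $\V_1=\cap_j\mathrm{Ker}\,Q_j$; moreover this $\V_1\oplus\V_2$ is the underlying splitting of the two ideals, so $[\V_1,\V_2]=0$ and $\Z=\Z_1\oplus\Z_2$ with $\Z_r=[\V_r,\V_r]$. Since the $P_i,Q_j$ are block diagonal, each of these common kernels splits along $\V_A\oplus\V_B$; writing $\V_r^A=\V_r\cap\V_A$ and $\V_r^B=\V_r\cap\V_B$, I get $\V_r=\V_r^A\oplus\V_r^B$ and hence $\V_A=\V_1^A\oplus\V_2^A$, $\V_B=\V_1^B\oplus\V_2^B$. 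The main step is then to verify that these induced splittings are genuine (possibly trivial) direct-sum-of-ideal decompositions of the nilalgebras of $A$ and of $B$: from $[\V_1,\V_2]=0$ and $[\V_A,\V_B]=0$ one reads off $[\V_1^A,\V_2^A]_A=0$ and $[\V_1^B,\V_2^B]_B=0$, and since $[\V_r^A,\V_r^A]_A$ and $[\V_r^B,\V_r^B]_B$ both land in $\Z_r$ while $\Z_1\cap\Z_2=0$, the relevant central subspaces split as direct sums as well (they span the full centers because $[\V_A,\V_A]_A=\Z_A$ and $[\V_B,\V_B]_B=\Z$).

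Invoking the indecomposability of $A$ and of $B$, one of $\V_1^A,\V_2^A$ and one of $\V_1^B,\V_2^B$ must vanish. A short case analysis shows $\V=\V_1\oplus\V_2$ is then either trivial or, after relabeling, equals $\V=\V_A\oplus\V_B$. To rule out the latter I would use the central data above: the splitting $\V_1=\V_A$, $\V_2=\V_B$ would force $\Z=[\V_A,\V_A]\oplus[\V_B,\V_B]=\Z_A\oplus\Z$, which is impossible since $\Z_A$ is a nonzero subspace of $\Z$ (here $\dim\Z_A=p_1$ and $\dim\Z=p_2>p_1$). This contradiction proves indecomposability. I expect the main obstacle to be the middle step --- confirming the block splitting yields honest ideal decompositions of $A$ and $B$ and not merely vector-space splittings --- which hinges on the block diagonality forcing $\V_r=\V_r^A\oplus\V_r^B$ together with the directness $\Z_1\cap\Z_2=0$ inherited from $\N$.
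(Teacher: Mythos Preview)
Your argument is correct and follows the same overall architecture as the paper's (omitted) proof, which mirrors the adjoin proposition: use the block-diagonal structure to split $\V_r$ along $\V_A\oplus\V_B$, invoke indecomposability of $A$ and $B$ to force $\V_1=\V_A$, $\V_2=\V_B$ (up to relabeling), and then derive a contradiction from $p_1<p_2$. The differences are in the execution of two steps. For the middle step, the paper projects the basis elements $Z_i,W_j$ onto each block and runs a three-case analysis (all $Z_i$ project to zero, all $W_j$ project to zero, or mixed) to apply Lemma~\ref{lemma: V=V_1+V_2 as kernels} to each factor; you instead observe directly that $[\V_1^A,\V_2^A]_A=0$ together with the directness of $\Z_1\oplus\Z_2$ yields a genuine ideal decomposition of the $A$-algebra, which is cleaner and avoids the case split. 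For the final contradiction, the paper-style argument writes any $Z\in\{Z_i\}$ as $\sum a_kC_k$, restricts to $\V_B$ to get $\sum a_kB_k=0$, and uses linear independence of the $B_k$ to force $Z=0$; you instead read off $\Z=\Z_1\oplus\Z_2=\Z_A\oplus\Z$, which is impossible since $0\neq\Z_A\subset\Z$. Both routes exploit $p_1<p_2$ in an essential way (for you, via $\dim\Z_A=p_1<p_2=\dim\Z$; for the paper, via the $B_k$ spanning all $p_2$ coordinates), and both are valid.

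One small point worth making explicit: your use of $[\V_1,\V_2]=0$ and $\Z=\Z_1\oplus\Z_2$ relies on the $\V_1,\V_2$ coming from an actual ideal decomposition of $\N$, not merely satisfying the kernel characterization in Lemma~\ref{lemma: V=V_1+V_2 as kernels}. This is fine---the proof of that lemma constructs $\V_1,\V_2$ precisely as the $\V$-parts of the two ideals---but it is worth saying, since the kernel conditions alone do not obviously give $[\V_1,\V_2]=0$.
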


The proof of this proposition is so similar to that of the next proposition, that we omit the details.  Moreover, once this result is known for the concatenation of two tuples, it is true via induction for a concatenation of an arbitrary number of tuples.

\begin{prop} Let $A^l  \in\mathfrak{so}(q_l)^{p_l}$, for $l=1,\dots, n$, be a collection of structure matrices corresponding to indecomposable algebras.  The algebra corresponding to the adjoin $C=A^1+_a\dots+_aA^n$ is also indecomposable.  \end{prop}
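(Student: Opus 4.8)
The plan is to argue by contradiction with the help of Lemma~\ref{lemma: V=V_1+V_2 as kernels}, exploiting that an adjoin is built on an orthogonal block decomposition $\mathbb R^q=\V^1\oplus\cdots\oplus\V^n$, with $\V^l\cong\mathbb R^{q_l}$, along which every structure matrix of $C=A^1+_a\cdots+_aA^n$ is block-diagonal. Write $\mathcal S=\mathrm{span}\{C_1,\dots,C_p\}\subset\mathfrak{so}(q)$ and let $M_l$ denote the $\V^l$-block of a block-diagonal $M$. First I would record two facts: restriction to the $l$-th block is a surjection $\rho_l:\mathcal S\to\mathcal S_l:=\mathrm{span}\{A^l_1,\dots,A^l_{p_l}\}$, and the standing no-de-Rham-factor hypothesis on $C$ descends to each block, so that $\bigcap_{M\in\mathcal S_l}\mathrm{Ker}\,M=0$ inside $\V^l$. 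Now suppose $\N$ decomposes nontrivially. By the lemma there is a basis of $\mathcal S$ splitting as $\{P_i\}\cup\{Q_j\}$, with spans $\mathcal S_P,\mathcal S_Q$ satisfying $\mathcal S=\mathcal S_P\oplus\mathcal S_Q$, and a decomposition $\V=\V_1\oplus\V_2$ with $\V_2=\bigcap_{M\in\mathcal S_P}\mathrm{Ker}\,M$ and $\V_1=\bigcap_{M\in\mathcal S_Q}\mathrm{Ker}\,M$, both nonzero. Because the elements of $\mathcal S$ are block-diagonal, their kernels split along the blocks; intersecting gives $\V_i=\bigoplus_l\V_i^l$ with $\V_i^l=\V_i\cap\V^l$, and a dimension count yields $\V^l=\V_1^l\oplus\V_2^l$ for every $l$.

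The first and decisive step is to show that indecomposability of each $A^l$ forces every block onto one side: for each $l$, either $\V_1^l=0$ or $\V_2^l=0$. Fixing $l$ and assuming both are nonzero, I would set $\mathcal S_P^l=\rho_l(\mathcal S_P)$ and $\mathcal S_Q^l=\rho_l(\mathcal S_Q)$, note $\mathcal S_P^l+\mathcal S_Q^l=\mathcal S_l$, $\V_2^l=\bigcap_{M\in\mathcal S_P^l}\mathrm{Ker}\,M$, and $\V_1^l=\bigcap_{M\in\mathcal S_Q^l}\mathrm{Ker}\,M$. Choosing a basis of $\mathcal S_P^l$ and completing it to a basis of $\mathcal S_l$ by vectors drawn from $\mathcal S_Q^l$, I would then verify---using $\bigcap_{M\in\mathcal S_l}\mathrm{Ker}\,M=0$ together with $\V^l=\V_1^l\oplus\V_2^l$---that the completing vectors cut out exactly $\V_1^l$. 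This furnishes a basis of $\mathcal S_l$ meeting the hypotheses of Lemma~\ref{lemma: V=V_1+V_2 as kernels} for $\N_l$ with both summands nonzero, contradicting the indecomposability of $A^l$.

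It then remains to propagate one side along the overlap chain. Suppose $\V^l\subseteq\V_1$ but $\V^{l+1}\subseteq\V_2$ for some consecutive pair. Then every $M\in\mathcal S_Q$ satisfies $M_l=0$ and every $M\in\mathcal S_P$ satisfies $M_{l+1}=0$, so the image of $\mathcal S$ under $M\mapsto(M_l,M_{l+1})$ is the Cartesian product $\rho_l(\mathcal S_P)\times\rho_{l+1}(\mathcal S_Q)$. On the other hand this image is spanned by $(A^l_1,0),\dots,(A^l_{p_l-1},0)$, the single overlap vector $(A^l_{p_l},A^{l+1}_1)$, and $(0,A^{l+1}_2),\dots,(0,A^{l+1}_{p_{l+1}})$; linear independence of the $A^l_{\,\cdot}$ forces the overlap vector to occur with coefficient one in any preimage of $(A^l_{p_l},0)$, and its second component $A^{l+1}_1$ then cannot be cancelled, so $(A^l_{p_l},0)$ is not in the image. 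A Cartesian product cannot omit $(A^l_{p_l},0)$ while containing $(A^l_{p_l},A^{l+1}_1)$, a contradiction. Hence consecutive blocks share a side, and since a chain adjoin links the blocks consecutively, all blocks lie on one side; thus $\V_1=0$ or $\V_2=0$, the decomposition is trivial, and $\N$ is indecomposable. I expect the crux to be the middle step---manufacturing from the block-restricted data an honest basis of $\mathcal S_l$ to which the decomposition lemma applies---where the no-de-Rham-factor hypothesis is precisely what forces the completing vectors to carve out $\V_1^l$ and nothing larger.
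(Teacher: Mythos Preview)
Your proposal is correct and follows essentially the same route as the paper. The only structural difference is that the paper reduces by induction to the case $n=2$ (treating $A^1+_a\cdots+_aA^{n-1}$ as a single indecomposable block and adjoining $A^n$), whereas you keep all $n$ blocks in play and propagate along the overlap chain; in both versions the two core moves are identical. Your ``decisive step'' is the paper's Case~3 analysis: one restricts $\mathcal S_P$ and $\mathcal S_Q$ to the $l$-th block, extracts a basis of $\mathcal S_l$, and uses the no-de-Rham-factor condition together with $\V^l=\V_1^l\oplus\V_2^l$ and a dimension count to see that the two halves of the basis cut out exactly $\V_1^l$ and $\V_2^l$, so Lemma~\ref{lemma: V=V_1+V_2 as kernels} applied to $A^l$ forces one side to vanish. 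Your overlap argument, phrased as ``the image under $M\mapsto(M_l,M_{l+1})$ would have to be a full Cartesian product but visibly misses $(A^l_{p_l},0)$,'' is the paper's observation that $\langle Z_i\rangle\subset\langle C_1,\dots,C_{p_1-1}\rangle$ and $\langle W_j\rangle\subset\langle C_{p_1+1},\dots,C_p\rangle$, so the overlap matrix $C_{p_1}$ escapes the span---the same contradiction in different clothing.
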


\textit{Remark.}  Observe that this implies $A^1 +_a \{ A^2,\dots, A^n\}$ is also indecomposable.

\begin{proof}  Using induction, we can reduce to the case $n=2$, that is, $C=A^1+_aA^2$.

As the elements $C_i$ of $C$ consist of block matrices, we have an orthogonal decomposition $\mathbb R^q = U_1 \oplus U_2$, with $U_j\simeq \mathbb R^{q_j}$, which is preserved by all $C_i$.  Here $A^l$ preserves $U_l$ and vanishes on $U_i$, $i\not =l$, and from this we immediately see that any matrix in the span of $<C_1,\dots, C_p>$ also preserves the decomposition $U_1\oplus U_2$.

Now suppose that $C$ is decomposable.  Lemma \ref{lemma: V=V_1+V_2 as kernels} states that there exists a basis $\{Z_i\} \cup \{ W_j\} \subset <C_1,\dots, C_p>$ and $\V_1,\V_2 \subset \mathbb R^q$ satisfying $\mathbb R^q = \V_1 \oplus \V_2$, $\V_1=\cap Ker\ W_j$, and $\V_2=\cap Ker\ Z_i$.  Observe that $Ker\ Z_i = \oplus_{j=1}^2 (U_j\cap Ker\ Z_i)$.  As $\V_2 = \cap_i Ker\ Z_i$ and each $U_j$ is stable under each $Z_i$, we immediately obtain
    $$\V_2 = \oplus_{j=1}^2(U_j \cap (\cap_i Ker\ Z_i))=\oplus_{j=1}^2(U_j\cap \V_2)$$
Similarly one can use $W_i$ to obtain
    $$\V_1 = \oplus_{j=1}^2(U_j\cap \V_1)$$
These direct sums are orthogonal.  Together these show $U_j = (U_j\cap \V_1)\oplus (U_j \cap \V_2)$ for $j=1,2$.  A priori, this direct sum is not necessarily orthogonal.

Next one can project each $Z_i$, $W_k$ onto the $A^l$-block; this is just restriction to $U_l$. Upon doing this we can extract from this set  a basis of $span<A^l_1,\dots,A^l_{p_l}>$.  We have three cases:  1) each $Z_i$ projects to zero,  2) each $W_k$ projects to zero, 3) there exists at least one $Z_i$ and one $W_k$ with non-zero projection.

Case 1.  As $\{Z_i\} \cup \{W_k\}$ form a basis of the span $<C_1,\dots, C_p>$, their projections span the  set  $<A^l_1,\dots, A^l_{p_l}>$.  Hence, the set $<A^l_1,\dots,A^l_{p_l}>$ is spanned by $\{  \restrictto{W_k}{U_l} \}$ and we have $\V_1 \cap U_l = \cap Ker \ \restrictto{W_k}{U_l} = \cap Ker\ A^l_k$.    From this we see that $U_l\cap \V_1 = \{0\}$ as $A^l$ being an indecomposable algebra there is no Euclidean de Rham factor (that is, no common kernel for the structure matrices).  Finally, since $U_l = (U_l\cap \V_1)\oplus (U_l\cap \V_2)$, we must have $U_l\cap \V_2 = U_l$.

Case 2.  Similar analysis shows that this case implies $U_l\cap \V_1 = U_l$.

Case 3.  Denote the extracted basis of the projection by $\{Z_i'\}\cup\{W_k'\}$.  Observe that $U_l\cap \V_1 = \cap Ker\ \restrictto{W_j}{U_l}
\subset \cap Ker\ W_k'$ and $U_l\cap \V_2= \cap Ker \ \restrictto{Z_j}{U_l} \subset \cap Ker\ Z_i'$.  If either of these set inclusions is not an equality, then $U_l = (U_l\cap \V_1)\oplus (U_l\cap \V_2)$ implies $( \cap Ker\ W_j') \cap (Ker\ Z_i') \not = \{0\}$ by comparing codimensions.  This says that there exists Euclidean de Rham factor which violates our hypothesis of $A^l$ being an indecomposable algebra.  Thus $U_l\cap \V_1 = \cap Ker\ W_j'$ and $U_l\cap \V_2 = \cap Ker\ Z_i'$.  But again using the hypothesis of $A^l$ being an indecomposable algebra, we achieve either $U_l\cap \V_1 =\{0\}$ or $U_l \cap \V_2 = \{0\}$ (cf. Lemma \ref{lemma: V=V_1+V_2 as kernels}).  Thus, either $U_l\cap \V_1=U_l$ or $U_l\cap \V_2=U_l$.

Finally, we have shown, regardless of which case occurs, that $ U_l\cap \V_1=U_l$   or $U_l\cap \V_2=U_l$  for $l=1,2$.  This in turn implies
    $$ \V_1 = U_i  \mbox{ and } \V_2=U_j$$
where $\{i,j\} = \{1,2\}$.

Next we recall the process of building $C=A^1+_aA^2$.  Here $C\in \mathfrak{so}(q)^p$ where $q=q_1+q_2$ and $p=p_1+p_2-1$.  We describe the elements $C_k$ as block matrices which preserve the decomposition $\mathbb R^q = U_1 \oplus U_2$.  For $1\leq k \leq p_1-1$, $C_k = \begin{pmatrix} A^1_k \\ & 0\end{pmatrix}$,  $C_{p_1} = \begin{pmatrix} A^1_{p_1} \\ & A^2_1 \end{pmatrix}$, and for $p_1+1\leq k \leq p$, $C_k =\begin{pmatrix} 0\\ & A^2_{k-p_1+1}\end{pmatrix}$.

To finish the proof of the theorem, we assume without loss of generality that $\V_1 = U_1$ and $\V_2=U_2$; that is, our collection of $\{Z_i\}$ vanishes on $U_2$ while our collection of $\{W_k\}$ vanishes on $U_1$.  Choose any $Z\in \{Z_i\}$ and write $Z = \sum a_k C_k$.  This matrix vanishes on $U_2$ and so we have
    $$0 = Z|_{U_2} = \sum_{k=1}^p a_k C_k|_{U_2} = \sum_{k=p_1}^p a_k C_k|_{U_2}=\sum_{k= 1}^{p_2} (a_{k+p_1-1}) A^2_k$$
However, the $\{A^2_k\}$ are linearly independent and hence $a_k=0$ for $k\geq p_1$; that is, the span $<Z_i>$ is contained in the span $<C_1,\dots, C_{p_1-1}>$.  Similarly, using the $\{W_j\}$ one obtains that the span $<W_j>$ is contained in the span $<C_{p_1+1}, \dots, C_p>$. Thus, $C_{p_1}$ is not in the span $<Z_i, W_j>$ which violates the hypothesis that $\{Z_i\}\cup \{W_j\}$ is a basis of the span $<C_1,\dots,C_p>$.

This proves that $C=A^1+_aA^2$ is indecomposable if both $A^l$ are indecomposable.  Using induction, the theorem is true for $n\geq 2$.

\end{proof}

\begin{cor}  The algebras constructed in this work are indecomposable.\end{cor}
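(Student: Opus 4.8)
The plan is to derive the corollary from the two preceding propositions: since concatenation of tuples with strictly increasing numbers of structure matrices, and adjoining, both preserve indecomposability, it suffices to check that the elementary blocks out of which every example of Theorems~\ref{thm: non-Einstein for p=2-6}, \ref{thm: adjoin one extra tuple} and \ref{thm: adjoin several extra tuples} is assembled are themselves indecomposable. These blocks are $A_1 = J +_c \cdots +_c J$; the pairs $(t_iB_1,B_2)$ and their concatenations; the tuples $(B_1,\dots,B_j)$ and the type $(2,3)$ soliton used in the $d=3$ and fill-in examples; and the minimal points $D^i$. I would isolate two sufficient conditions for indecomposability, both read directly off Lemma~\ref{lemma: V=V_1+V_2 as kernels}, and then verify that each block meets one of them.

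The first criterion is: \emph{if every nonzero element of the linear span of $C_1,\dots,C_p$ in $\mathfrak{so}(q)$ is invertible, then the algebra is indecomposable.} Indeed, in the notation of Lemma~\ref{lemma: V=V_1+V_2 as kernels} a genuine decomposition forces both $\{A_i\}$ and $\{B_j\}$ to be nonempty (otherwise there is a Euclidean de Rham factor), whence $\V_2 \subseteq \ker A_1 = 0$ and likewise $\V_1 = 0$, contradicting $\V = \V_1\oplus\V_2$. A direct computation gives $B_1B_2 = -B_2B_1$, so together with $B_i^2 = -\mathrm{Id}$ one has $(\alpha B_1 + \beta B_2)^2 = -(\alpha^2+\beta^2)\mathrm{Id}$; hence (for the non-Einstein values $t_i\neq 0$) the concatenations of the pairs $(t_iB_1,B_2)$, the pair $(B_1,B_2)$, and the invertible block $A_1$ all satisfy this criterion. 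The second criterion is a dimension count: writing $d_i = \dim\V_i$, a decomposition places the nonempty sets $\{A_i\}$, $\{B_j\}$ inside $\mathfrak{so}(\V_2^{\perp})\cong\mathfrak{so}(d_1)$ and $\mathfrak{so}(\V_1^{\perp})\cong\mathfrak{so}(d_2)$ respectively, forcing $d_1,d_2\geq 2$, $d_1+d_2 = q$, and $p \leq \tfrac12 d_1(d_1-1) + \tfrac12 d_2(d_2-1)$. By convexity the right side is maximized at $d_1 = 2$, giving $p \leq 1 + \tfrac12(q-2)(q-3)$. Thus \emph{any type $(p,q)$ algebra with no de Rham factor and $p \geq 2 + \tfrac12(q-2)(q-3)$ is indecomposable.} This is precisely the lower bound imposed on each $D^i$ (which has no de Rham factor, being a nonzero minimal point), so every $D^i$ is indecomposable; the same bound handles $(B_1,\dots,B_j)$ for $j\geq 3$ (here $q=4$ and $2+\tfrac12(q-2)(q-3)=3$) and, for $q=3$, shows the type $(2,3)$ soliton is indecomposable since no splitting with $d_1,d_2\geq 2$ exists.

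With the blocks in hand I would assemble by peeling in order of increasing $p$. For the base examples of Theorem~\ref{thm: non-Einstein for p=2-6}: when $j\geq 3$, write $C = A_1 +_c \big[\,M +_c (B_1,\dots,B_j)\,\big]$ with $M = (t_1B_1,B_2)+_c\cdots+_c(t_{n-1}B_1,B_2)$ of type $(2,\ast)$, and apply the concatenation proposition twice along $1 < 2 < j$; when $j = 2$ the entire tail $M +_c (B_1,B_2)$ is invertible-spanned, so $C = A_1 +_c(\text{tail})$ and a single application suffices. The fill-in examples of types $(3,9),\dots,(6,9)$ split as $[J] +_c (\text{type }(2,3)) +_c (B_1,\dots,B_j)$ with $1 < 2 < j$ and are handled identically. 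Finally, the examples of Theorems~\ref{thm: adjoin one extra tuple} and \ref{thm: adjoin several extra tuples} are $C +_a\{D^1,\dots,D^k\}$ with $C$ and every $D^i$ now known to be indecomposable, so the adjoin proposition completes the proof.

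The main obstacle is not any single estimate but the bookkeeping forced by the strict inequality $p_1 < p_2$ in the concatenation proposition: the base examples contain several blocks of the same type $(2,4)$, which must first be amalgamated into one invertible-spanned tuple before the proposition applies. The conceptual heart is recognizing that the seemingly ad hoc lower bound $\tfrac12(q-2)(q-3)+2$ on the size of the $D^i$ is exactly the threshold beyond which the kernel-dimension count in Lemma~\ref{lemma: V=V_1+V_2 as kernels} excludes every splitting.
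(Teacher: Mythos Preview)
Your approach is essentially the paper's: reduce via the concatenation and adjoin propositions to the indecomposability of the elementary blocks, then verify each block. Your two criteria---invertible linear span, and the dimension bound $p \geq \tfrac12(q-2)(q-3)+2$---are exactly the content of the lemmas the paper proves immediately after the corollary, and your derivation of $d_1,d_2\geq 2$ from $\mathfrak{so}(1)=0$ is in fact slightly slicker than the paper's parity argument.

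There is one case you do not cover. The $d=3$ family of Theorem~\ref{thm: non-Einstein for p=2-6} is obtained by concatenating the type $(2,3)$ soliton onto the $d=0$ examples. For $j\geq 3$ this is easily absorbed into your $1<2<j$ peeling (the soliton is an indecomposable type-$2$ block, and the accumulated tail has type $j>2$). But for $j=2$, $d=3$ your argument breaks: the tail now contains an $\mathfrak{so}(3)$ block, every element of which is singular, so the span is no longer invertible; and the dimension criterion is far from satisfied since $p=2$ while $q=2k+4n+3\geq 11$. The paper closes this gap by a different device (the final lemma of Section~\ref{section: indecomposability}): \emph{any} type $(2,q)$ non-Einstein nilradical with no Euclidean de Rham factor is automatically indecomposable, because a decomposition would exhibit it as a sum of two type-$(1,\ast)$ algebras, each of which is an Einstein nilradical, contradicting Theorem~\ref{thm: Einstein and direct sums}. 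That lemma disposes of all the $j=2$ examples (for both $d=0$ and $d=3$) in one stroke, and is the one ingredient your proposal is missing.
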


\begin{proof}  The algebras constructed in this work are built by concatenating and  adjoining the structure matrices of indecomposable algebras.  If the smaller algebras are indecomposable, then the propositions above say that the concatenation or adjoin will also be indecomposable.  The following lemmas show that the input algebras are all indecomposable.
\end{proof}

\begin{lemma}  Consider $B=(B_1,B_2)\in\mathfrak{so}(3)^2$.  The algebra corresponding to $B$ is indecomposable.
\end{lemma}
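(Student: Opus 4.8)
The plan is to reduce the statement to a one-line dimension count via Lemma~\ref{lemma: V=V_1+V_2 as kernels}. First I would record the facts peculiar to $\mathfrak{so}(3)$: a skew-symmetric matrix has even rank, so every nonzero element of $\mathfrak{so}(3)$ has rank $2$ and hence a kernel of dimension exactly $1$. Using the identification of $\mathfrak{so}(3)$ with $\mathbb R^3$ that sends $w$ to the endomorphism $x\mapsto w\times x$, the kernel of the matrix attached to $w\neq 0$ is precisely the line $\mathbb R w$. Since an algebra of type $(2,3)$ has $B_1,B_2$ linearly independent (as noted in Section~\ref{section: two-step nilpotent lie algebras and RpqC}), the corresponding lines $\mathrm{Ker}\,B_1$ and $\mathrm{Ker}\,B_2$ are distinct, whence $\mathrm{Ker}\,B_1\cap\mathrm{Ker}\,B_2=\{0\}$. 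This shows the algebra has no Euclidean de Rham factor, which is exactly the hypothesis needed to apply Lemma~\ref{lemma: V=V_1+V_2 as kernels}.

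Next I would argue by contradiction. Suppose the algebra is decomposable. By Lemma~\ref{lemma: V=V_1+V_2 as kernels} there is a basis $\{A_1,\dots,A_l\}\cup\{B'_1,\dots,B'_k\}$ of the $2$-dimensional space $\mathrm{span}\langle B_1,B_2\rangle\subset\mathfrak{so}(3)$ with $\mathbb R^3=\V=\V_1\oplus\V_2$, where $\V_2=\cap_{i}\mathrm{Ker}\,A_i$ and $\V_1=\cap_{j}\mathrm{Ker}\,B'_j$. Because the span is $2$-dimensional, $l+k=2$. I would then dispose of the degenerate splittings: if $l=0$ then $\V_2=\mathbb R^3$ while $\V_1=\cap_{j}\mathrm{Ker}\,B'_j=\{0\}$ by the previous paragraph, so this is the trivial splitting $\N=0\oplus\N$ (an empty summand), and symmetrically for $l=2$. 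Hence the only genuine case is $l=k=1$.

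In that remaining case both $A_1$ and $B'_1$ are nonzero elements of $\mathfrak{so}(3)$, so each has a one-dimensional kernel; thus $\dim\V_2=\dim\mathrm{Ker}\,A_1=1$ and $\dim\V_1=\dim\mathrm{Ker}\,B'_1=1$. But then $\dim\V_1+\dim\V_2=2<3=\dim\V$, contradicting $\V=\V_1\oplus\V_2$. Therefore no nontrivial decomposition exists, and the algebra corresponding to $B=(B_1,B_2)$ is indecomposable.

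I expect the only real subtlety to be the bookkeeping around the degenerate splittings and the verification that the no-de-Rham-factor hypothesis of Lemma~\ref{lemma: V=V_1+V_2 as kernels} genuinely holds; once the rank and kernel dimension of elements of $\mathfrak{so}(3)$ are in hand, the heart of the argument is the immediate inequality $1+1<3$, so there is essentially no computational difficulty.
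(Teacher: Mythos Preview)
Your proof is correct and follows essentially the same route as the paper: apply Lemma~\ref{lemma: V=V_1+V_2 as kernels} and use that nonzero elements of $\mathfrak{so}(3)$ have one-dimensional kernel, so the putative summands cannot fill $\mathbb R^3$. You are simply more explicit than the paper in verifying the no-Euclidean-de-Rham-factor hypothesis and in disposing of the degenerate partitions $l=0,2$, which the paper tacitly excludes by writing the basis as $\{Z_1,W_1\}$ from the outset.
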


\begin{proof}  We apply Lemma \ref{lemma: V=V_1+V_2 as kernels}.  Suppose there exists a basis $\{Z_1, W_1\}$ of the span $<B_1,B_2>$ such that $\mathbb R^3 = \V_1\oplus \V_2$ where $\V_1= \ Ker Z_1$ and $\V_2 = Ker \ W_1$.  Then $\dim \V_i = 1$ or $2$.  However, since $\mathbb R^3$ is odd dimensional, the $\V_i$ must be odd dimensional as they are kernels.  Thus no such $Z_1,W_1$ exist.
\end{proof}

\begin{lemma}\label{lemma: B preserves no proper subspaces of R4} Consider $B=(B_1,B_2)\in \mathfrak{so}(4)^2$ where $B_1 = \begin{bmatrix} J & 0\\ 0 & J\end{bmatrix}$ and $B_2 =\begin{bmatrix} 0& K\\ -K&0\end{bmatrix}$ are block matrices with $J=\begin{bmatrix}0&1\\-1&0\end{bmatrix}$ and $K=\begin{bmatrix}0& 1\\1&0\end{bmatrix}$.  Any non-trivial linear combination is non-singular, hence $B$ corresponds to an indecomposable algebra.
\end{lemma}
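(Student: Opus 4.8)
The plan is to compute the determinant of a general nontrivial linear combination $aB_1 + bB_2$ directly, show it never vanishes, and then feed this non-degeneracy into Lemma \ref{lemma: V=V_1+V_2 as kernels} to conclude indecomposability.

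First I would write the combination out explicitly as the $4\times 4$ skew-symmetric matrix
$$ aB_1 + bB_2 = \begin{bmatrix} 0 & a & 0 & b \\ -a & 0 & b & 0 \\ 0 & -b & 0 & a \\ -b & 0 & -a & 0 \end{bmatrix}. $$
Since the determinant of a $4\times 4$ skew-symmetric matrix equals the square of its Pfaffian, and the Pfaffian here (with entries $m_{12}=a$, $m_{13}=0$, $m_{14}=b$, $m_{23}=b$, $m_{24}=0$, $m_{34}=a$) is $m_{12}m_{34} - m_{13}m_{24} + m_{14}m_{23} = a^2+b^2$, one obtains
$$ \det(aB_1 + bB_2) = (a^2+b^2)^2, $$
which is strictly positive for $(a,b)\neq(0,0)$. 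Hence every nontrivial linear combination of $B_1$ and $B_2$ is non-singular. (One could instead expand by cofactors, but the Pfaffian route is cleaner.)

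Next I would invoke the decomposability criterion. The span $\langle B_1,B_2\rangle$ is two-dimensional, so any basis $\{A_i\}\cup\{W_j\}$ of it as in Lemma \ref{lemma: V=V_1+V_2 as kernels} with both blocks nonempty must contain exactly one matrix in each block, say $\{A_1\}\cup\{W_1\}$ with $A_1,W_1$ nonzero members of the span. A nontrivial splitting would then demand $\mathbb R^4 = \V_1\oplus\V_2$ with $\V_2 = Ker\ A_1$ and $\V_1 = Ker\ W_1$. But by the computation above both $A_1$ and $W_1$ are non-singular, so $Ker\ A_1 = Ker\ W_1 = \{0\}$ and the alleged decomposition gives $\V_1\oplus\V_2 = \{0\}$ rather than all of $\mathbb R^4$. (Non-singularity of $B_1$ alone already shows the structure matrices have no common kernel, so $[\N,\N]$ equals the center and there is no Euclidean de Rham factor; thus the hypotheses of the lemma are met.) Therefore no nontrivial direct-sum decomposition exists and the algebra is indecomposable.

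There is no serious obstacle here: the entire content is the one-line Pfaffian evaluation together with the observation that a two-dimensional span leaves no room for a nontrivial splitting once every nonzero element is invertible. The only point meriting a moment's care is checking that Lemma \ref{lemma: V=V_1+V_2 as kernels} applies, namely that the commutator coincides with the center, and this is immediate from the invertibility of $B_1$.
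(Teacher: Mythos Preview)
Your proof is correct and follows precisely the approach the paper indicates: a direct calculation (which the paper omits) showing every nontrivial linear combination is non-singular, followed by an appeal to Lemma~\ref{lemma: V=V_1+V_2 as kernels}. The Pfaffian computation and the verification that the no-Euclidean-de-Rham-factor hypothesis holds are both fine.
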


The proof is a simple calculation, which we omit, and the application of Lemma \ref{lemma: V=V_1+V_2 as kernels}.

\begin{lemma}\label{lemma: indecom for large p} Consider $C\in \mathfrak{so}(q)^p$ with $q$ even.  If $\frac{1}{2}(q-2)(q-3)+2\leq p \leq \frac{1}{2}q(q-1)$, then $C$ is an indecomposable algebra.
\end{lemma}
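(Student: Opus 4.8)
The plan is to argue by contradiction through a dimension count, with Lemma~\ref{lemma: V=V_1+V_2 as kernels} as the main tool. The one structural input beyond the stated bound on $p$ is that the $C_i$ have no common kernel, equivalently that $C$ has no Euclidean de Rham factor; this is exactly the standing hypothesis needed to apply Lemma~\ref{lemma: V=V_1+V_2 as kernels}, and it holds for the minimal points to which this lemma is applied in Theorem~\ref{thm: adjoin one extra tuple}, since for them $\sum_i C_i^2$ is a nonzero multiple of the identity and hence invertible, forcing $\cap_i \mathrm{Ker}\,C_i = 0$. I would record this first.

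Granting this, suppose for contradiction that $C$ is decomposable. By Lemma~\ref{lemma: V=V_1+V_2 as kernels} there is a splitting $\mathbb R^q=\V_1\oplus\V_2$ together with a basis $\{A_1,\dots,A_l\}\cup\{B_1,\dots,B_k\}$ of $\mathrm{span}\langle C_1,\dots,C_p\rangle$ with $l+k=p$, $\V_2=\cap_i\mathrm{Ker}\,A_i$, and $\V_1=\cap_j\mathrm{Ker}\,B_j$. Set $q_1=\dim\V_1$, $q_2=\dim\V_2$, so $q_1+q_2=q$. Imitating the proof of that lemma, I would choose $g\in GL(q,\mathbb R)$ with $g^t$ carrying the standard coordinate subspaces $\mathbb R^{q_1},\mathbb R^{q_2}$ onto $\V_1,\V_2$ (possible since $\V_1\oplus\V_2=\mathbb R^q$). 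Because each $A_i$ annihilates $\V_2$, the matrix $gA_ig^t$ annihilates $\mathbb R^{q_2}$; being skew-symmetric, its last $q_2$ rows and columns vanish, so $gA_ig^t\in\mathfrak{so}(q_1)\oplus 0$, and symmetrically $gB_jg^t\in 0\oplus\mathfrak{so}(q_2)$. As $A\mapsto gAg^t$ is a linear isomorphism of $\mathfrak{so}(q)$, these families remain linearly independent, giving $l\le\dim\mathfrak{so}(q_1)=\tfrac12 q_1(q_1-1)$ and $k\le\tfrac12 q_2(q_2-1)$.

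Next I would discard the degenerate splittings. Since $C$ has no de Rham factor, neither ideal can be abelian: a one-dimensional $\V_i$ would force the corresponding bracket to vanish and hence produce an abelian summand, so in fact $q_1,q_2\ge 2$. Combining the two counts yields $p=l+k\le\tfrac12 q_1(q_1-1)+\tfrac12 q_2(q_2-1)$ subject to $q_1+q_2=q$ and $q_1,q_2\ge 2$. The right-hand side is convex and symmetric in $q_1$, hence over the integer range $\{2,\dots,q-2\}$ it is maximized at the extreme split $q_1=2$, where it equals $\tfrac12(q-2)(q-3)+1$. Thus any decomposable $C$ would satisfy $p\le\tfrac12(q-2)(q-3)+1$, contradicting the hypothesis $p\ge\tfrac12(q-2)(q-3)+2$; therefore $C$ is indecomposable.

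I expect the only genuinely delicate step to be the second paragraph: transporting the block structure through the possibly non-orthogonal change of basis $g$ and confirming that the transformed structure matrices really do land in the separate blocks $\mathfrak{so}(q_1)$ and $\mathfrak{so}(q_2)$. That is precisely what converts the abstract kernel conditions of Lemma~\ref{lemma: V=V_1+V_2 as kernels} into the usable inequalities $l\le\tfrac12 q_1(q_1-1)$ and $k\le\tfrac12 q_2(q_2-1)$; once those are secured, the extremal computation and the resulting contradiction are routine. (The example of $p$ skew matrices on $\mathbb R^q$ sharing a common kernel shows that the no-de-Rham-factor input in the first paragraph is genuinely necessary, not merely convenient.)
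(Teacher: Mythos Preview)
Your argument is correct and follows the same overall strategy as the paper: assume decomposability, invoke Lemma~\ref{lemma: V=V_1+V_2 as kernels}, bound $l$ and $k$ by $\dim\mathfrak{so}(q_1)$ and $\dim\mathfrak{so}(q_2)$, and maximize over admissible splittings. Two small differences are worth noting. First, to rule out $\dim\V_i=1$ the paper uses the hypothesis that $q$ is even: any $B_j\in\mathfrak{so}(q)$ has even-dimensional kernel, so $\dim\V_2=q-1$ would force each $\operatorname{Ker}B_j$ to be all of $\mathbb R^q$. Your argument instead deduces $\dim\V_i\ge 2$ directly from the absence of a Euclidean de~Rham factor, and in fact never uses the parity of $q$; so you have proved a marginally stronger statement. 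Second, for the dimension bound the paper avoids your explicit change of basis: since $\V_2\subset\operatorname{Ker}A_i$ and $A_i$ is skew-symmetric, $A_i$ preserves $\V_2^\perp$ and vanishes on $\V_2$, so $A_i\in\mathfrak{so}(\V_2^\perp)$ with $\dim\V_2^\perp=q-\dim\V_2=\dim\V_1$, giving $l\le\tfrac12 q_1(q_1-1)$ immediately. Your conjugation argument reaches the same conclusion and is perfectly valid (note $gAg^t$ is automatically skew for any $g$), just slightly longer.
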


\textit{Remark}.  For the case $q=6$ this produces the bounds $3\leq p\leq 6$.

\begin{proof}  Recall that the kernel of any element of $\mathfrak{so}(q)$ will be even dimensional as $q$ is even.

Again we apply Lemma \ref{lemma: V=V_1+V_2 as kernels}.  Suppose that $C$ corresponds to a decomposable algebra.  Let $\{A_1,\dots,A_L\}$, $\{B_1,\dots,B_K\}$, $\V_1 = \cap Ker\ A_i$, and $\V_2 = \cap Ker\ B_j$ be as in that lemma.  For such a sum $\V_1 \oplus \V_2$ to be non-trivial, one would have $\dim \V_1, \dim \V_2 \geq 1$ and $\dim \V_1 + \dim \V_2=q$.  This in turn implies $\dim \V_1 , \dim \V_2 \geq 2$.  To see this, suppose $\dim \V_1=1$.  Then $\dim (\cap Ker\ B_j)=q-1$ and hence $\dim Ker\ B_j \geq q-1$ for all $j$.  However, $\dim Ker\ B_j$ must be even, and thus we would have $\dim Ker\ B_j = q$, which is a contradiction.

Now that we have established the inequalities $\dim \V_1, \dim \V_2 \geq 2$ we can proceed.  Let $M=\dim \V_1$, then we have $q-M=\dim \V_2$.  As $\V_2\subset Ker\ A_i$, $A_i$ preserves $\V_2^\perp$ and hence $A_i\in \mathfrak{so}(\V_2^\perp)$.  And so we have $L = card\{A_1,\dots,A_L\} \leq \dim \mathfrak{so}(\V_2^\perp)=\dim \mathfrak{so}(M)=\frac{1}{2}M(M-1)$.  Similarly we have $K=card\{B_1,\dots,B_K\} \leq \frac{1}{2}(q-M)(q-M-1)$.

As $\{A_1,\dots,A_L\}\cup \{B_1,\dots,B_K\}$ form a basis of the span $<C_1,\dots, C_p>$ we have $p=L+K \leq \frac{1}{2}[M(M-1)+(q-M)(q-M-1)]$.  As $2\leq M \leq q-2$ we see that $p\leq \frac{1}{2}[2+(q-2)(q-3)] = \frac{1}{2}(q-2)(q-3)+1$.  This proves the lemma.
\end{proof}

\begin{lemma} If $\N$ is of type $(2,q)$, with no Euclidean de Rham factor, and is a non-Einstein nilradical, then $\N$ is indecomposable.
\end{lemma}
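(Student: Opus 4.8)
The plan is to prove the contrapositive: if $\N$ is of type $(2,q)$ with $[\N,\N]=\Z$ (no Euclidean de Rham factor) and $\N$ \emph{is} decomposable, then $\N$ is an Einstein nilradical. First I would invoke Lemma \ref{lemma: V=V_1+V_2 as kernels} to translate decomposability into a statement about the structure matrices $C=(C_1,C_2)$. Decomposability produces a basis $\{A_1,\dots,A_l\}\cup\{B_1,\dots,B_k\}$ of $span<C_1,C_2>$ together with a splitting $\V=\V_1\oplus\V_2$, where $\V_2=\cap_i Ker\ A_i$ and $\V_1=\cap_j Ker\ B_j$. Since $span<C_1,C_2>$ is two-dimensional, we must have $l+k=2$.

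The heart of the argument is the case analysis on $(l,k)$. I would rule out $(l,k)=(2,0)$ and $(0,2)$: in these cases one of the summands has no structure matrices, i.e.\ is abelian, and hence is a Euclidean de Rham factor of $\N$ --- excluded by the standing hypothesis $[\N,\N]=\Z$. (Equivalently, an empty intersection forces one of $\V_1,\V_2$ to be all of $\mathbb R^q$ and the other to be $\{0\}$, a trivial decomposition.) This leaves only $(l,k)=(1,1)$, so $\N=\N_1\oplus\N_2$ where each $\N_i$ is a two-step nilalgebra of type $(1,q_i)$ with $q_1+q_2=q$.

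To finish, recall (as noted at the start of Section \ref{section 3}) that every algebra of type $(1,q_i)$ is isomorphic to a Heisenberg-type algebra direct-sum a Euclidean factor, and is therefore an Einstein nilradical. Since both $\N_1$ and $\N_2$ are Einstein nilradicals, Theorem \ref{thm: Einstein and direct sums} yields that $\N=\N_1\oplus\N_2$ is an Einstein nilradical. This contradicts the assumption that $\N$ is non-Einstein, and so $\N$ must be indecomposable.

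The step I expect to require the most care is the exclusion of the degenerate cases $(l,k)\in\{(2,0),(0,2)\}$: this is precisely where the hypothesis $[\N,\N]=\Z$ is essential, since without it one could split off an abelian ideal and a type $(2,q)$ algebra could then be decomposable while remaining non-Einstein. Everything else is bookkeeping with the dimension count $p=p_1+p_2=2$ together with the already-established facts about type $(1,q)$ algebras and Theorem \ref{thm: Einstein and direct sums}.
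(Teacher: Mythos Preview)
Your proposal is correct and follows essentially the same strategy as the paper: argue by contradiction that a decomposition $\N=\N_1\oplus\N_2$ with no abelian summand forces each $\N_i$ to be of type $(1,q_i)$, hence an Einstein nilradical, and then invoke Theorem \ref{thm: Einstein and direct sums}. The only difference is cosmetic: the paper reaches the conclusion that each $\N_i$ has one-dimensional commutator directly from $[\N,\N]=[\N_1,\N_1]\oplus[\N_2,\N_2]$ having dimension $2$, whereas you route through Lemma \ref{lemma: V=V_1+V_2 as kernels} and the dimension count $l+k=2$; both arrive at the same place.
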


\begin{proof} Suppose that $\N$ decomposes as $\N_1\oplus \N_2$, a direct sum of ideals. As $\N$ has no Euclidean de Rham factor, neither $\N_1$ nor $\N_2$ is abelian, and hence, both have one dimensional centers.  However, there is only one two-step nilalgebra (up to isomorphism) with one dimensional center and it is an Einstein nilradical.  Applying Theorem \ref{thm: Einstein and direct sums} we would then have $\N$ is an Einstein nilradical, which contradicts our hypothesis.
\end{proof}


\begin{thebibliography}{Jab08b}

\bibitem[Ebe07]{Eberlein:prescribedRicciTensor}
Patrick Eberlein, \emph{Riemannian 2-step nilmanifolds with prescribed ricci
  tensor}, to appear in Contemp. Math. (2007).

\bibitem[EJ09]{EberleinJablo}
Patrick Eberlein and Michael Jablonski, \emph{Closed orbits of semisimple group
  actions and the real {H}ilbert-{M}umford function}, Contemporary Math,
  Conference Proceedings Cruz Chica 2007, to appear (2009).

\bibitem[Ela72]{ElashviliStationarySubalgebra}
A.G. Elashvili, \emph{Stationary subalgebras of points of the common state for
  irreducible linear {L}ie groups}, Funct. Anal. Appl. \textbf{6} (1972),
  139--148.

\bibitem[GK01]{GordonKerr:NewHomogeneousEinstein}
Carolyn~S. Gordon and Megan~M. Kerr, \emph{New homogeneous {E}instein metrics
  of negative {R}icci curvature}, Ann. Global Anal. Geom. \textbf{19} (2001),
  no.~1, 75--101. \MR{MR1824172 (2002f:53067)}

\bibitem[Heb98]{Heber}
Jens Heber, \emph{Noncompact homogeneous einstein spaces}, Inventiones
  Mathematicae \textbf{133} (1998), no.~2, 279--352.

\bibitem[Jab08a]{Jablo:FinitenessTheorem-compatibleSubgroups}
Michael Jablonski, \emph{Detecting orbits along subvarieties via the moment
  map}, arXiv:0810.5697 [math.DG] (2008).

\bibitem[Jab08b]{JabloDistinguishedOrbits}
\bysame, \emph{Distinguished orbits of reductive groups}, arXiv:0806.3721v1
  [math.DG] (2008).

\bibitem[Jab08c]{Jablo:Thesis}
\bysame, \emph{Real geometric invariant theory and {R}icci soliton metrics on
  two-step nilmanifolds}, Thesis (May 2008).

\bibitem[Ker06]{Kerr:ADeformationOfQuaternionic}
Megan~M. Kerr, \emph{A deformation of quaternionic hyperbolic space}, Proc.
  Amer. Math. Soc. \textbf{134} (2006), no.~2, 559--569 (electronic).
  \MR{MR2176025 (2007e:53050)}

\bibitem[Kir84]{Kirwan}
Frances~Clare Kirwan, \emph{Cohomology of quotients in symplectic and algebraic
  geometry}, Mathematical Notes 31, Princeton University Press, Princeton, New
  Jersey, 1984.

\bibitem[KL87]{KL}
Friedrich Knop and Peter Littleman, \emph{Der grad erzeugender funktionen von
  invariantenringen. ({G}erman) [the degree of generating functions of rings of
  invariants]}, Math. Z. \textbf{196} (1987), no.~2, 211--229.

\bibitem[Lau06]{Lauret:CanonicalCompatibleMetric}
Jorge Lauret, \emph{A canonical compatible metric for geometric structures on
  nilmanifolds}, Ann. Global Anal. Geom. \textbf{30} (2006), no.~2, 107--138.
  \MR{MR2234091 (2007g:53092)}

\bibitem[Lau07]{LauretStandard}
\bysame, \emph{Einstein solvmanifolds are standard}, arXiv:math.DG/0703472
  (2007).

\bibitem[LW07]{LauretWill:EinsteinSolvExistandNonexist}
Jorge Lauret and Cynthia Will, \emph{Einstein solvmanifolds: Existence and
  non-existence questions}, arXiv:math/0602502v3 [math.DG] (2007).

\bibitem[Mar01]{Marian}
Alina Marian, \emph{On the real moment map}, Mathematical Research Letters
  \textbf{8} (2001), 779--788.

\bibitem[Mos55]{Mostow:SelfAdjointGroups}
G.~D. Mostow, \emph{Self-adjoint groups}, Ann. of Math. (2) \textbf{62} (1955),
  44--55.

\bibitem[Nik08a]{Nikolayevsky:EinsteinSolvmanifoldsandPreEinsteinDerivation}
Y.~Nikolayevsky, \emph{Einstein solvmanifolds and the pre-{E}instein
  derivation}, arXiv:math.DG/0802.2137 (2008).

\bibitem[Nik08b]{Nikolayevsky:EinsteinSolvmanifoldsattchedtoTwostepNilradicals}
\bysame, \emph{Einstein solvmanifolds attached to two-step nilradicals},
  arXiv:0805.0646v1 [math.DG] (2008).

\bibitem[NM84]{Ness}
Linda Ness and David Mumford, \emph{A stratification of the null cone via the
  moment map}, American Journal of Mathematics \textbf{106} (1984), no.~6,
  1281--1329.

\bibitem[Pay05]{Payne:ExistenceofSolitononNil}
Tracy Payne, \emph{The existence of soliton metrics for nilpotent {L}ie
  groups}, arXiv:0809.5068v1 [math.DG] (2005).

\bibitem[RS90]{RichSlow}
R.W. Richardson and P.J. Slodowy, \emph{Minimum vectors for real reductive
  algebraic groups}, J. London Math. Soc. \textbf{42} (1990), 409--429.

\bibitem[Wil03]{Will}
C.E. Will, \emph{Rank-one einstein solvmanifolds of dimension 7}, Diff. Geom.
  Appl. \textbf{19} (2003), 307--318.

\bibitem[Wil08]{Will:CurveOfNonEinsteinNilradicals}
\bysame, \emph{A curve of nilpotent lie algebras which are not einstein
  nilradicals}, arXiv:0802.2544v1 [math.DG] (2008).

\end{thebibliography}
\providecommand{\bysame}{\leavevmode\hbox to3em{\hrulefill}\thinspace}
\providecommand{\MR}{\relax\ifhmode\unskip\space\fi MR }
\providecommand{\MRhref}[2]{%
  \href{http://www.ams.org/mathscinet-getitem?mr=#1}{#2}
}
\providecommand{\href}[2]{#2}

\end{document}